\documentclass[twoside]{article}

\usepackage[accepted]{aistats2026}

\usepackage[T1]{fontenc} 
\usepackage{natbib}
\usepackage{enumitem}
\setlist[itemize]{leftmargin=1.25em}
\setlist[enumerate]{leftmargin=1.25em}
\usepackage{booktabs}
\usepackage{multirow}
\usepackage{color}
\usepackage{hyperref}
\hypersetup{
    colorlinks=true,
    linkcolor=blue,
    citecolor=blue,      
    urlcolor=cyan,
    pdfpagemode=FullScreen,
}
\usepackage{amsmath, amsfonts, amssymb, amsthm}
\usepackage{mathtools}
\usepackage{bm}
\usepackage{algorithm, algpseudocode}
\usepackage{chngcntr}
\usepackage{adjustbox}

\mathtoolsset{showonlyrefs}


\newcommand{\RR}{\mathbb{R}}
\newcommand{\ZZ}{\mathbb{Z}}
\newcommand{\ZZp}{\ZZ_{>0}}
\newcommand{\ZZnn}{\ZZ_{\geq 0}}
\newcommand{\bbE}{\mathbb{E}}

\newcommand{\Ind}{\mathbb{I}}


\newcommand{\bone}{\boldsymbol{1}}

\newcommand{\cA}{\mathcal{A}}

\newcommand{\cE}{\mathcal{E}}
\newcommand{\cF}{\mathcal{F}}

\newcommand{\cI}{\mathcal{I}}
\newcommand{\cJ}{\mathcal{J}}
\newcommand{\cK}{\mathcal{K}}

\newcommand{\cM}{\mathcal{M}}
\newcommand{\cN}{\mathcal{N}}

\newcommand{\cR}{\mathcal{R}}
\newcommand{\cS}{\mathcal{S}}
\newcommand{\cT}{\mathcal{T}}

\newcommand{\cV}{\mathcal{V}}


\newcommand{\res}{r}

\newcommand{\sfL}{\mathsf{L}}
\newcommand{\sfR}{\mathsf{R}}


\newcommand{\Cov}{\mathrm{Cov}}
\newcommand{\Var}{\mathrm{Var}}

\newcommand{\htaudim}{\hat{\tau}_{\mathrm{DiM}}}
\newcommand{\htauols}{\hat{\tau}_{\mathrm{OLS}}}
\newcommand{\hresdim}{\hat{\tau}^{\mathrm{res}}_{\mathrm{DiM}}}






\DeclareMathOperator*{\argmin}{\arg\min}

\DeclareMathOperator*{\Id}{\mathrm{Id}}

\newcommand{\arm}{\alpha}
\newcommand{\Sone}{\cS_1}
\newcommand{\Szero}{\cS_0}
\newcommand{\Sa}{\cS_{\arm}}

\newcommand{\na}{n_{\arm}}

\newcommand{\OLS}{\mathsf{OLS}}
\newcommand{\colsp}{\mathrm{col}}

\newcommand{\Vstar}{V^\star}
\newcommand{\Rstar}{R^\star}
\newcommand{\Bstar}{B^\star}

\newcommand{\ycenta}[1]{\tilde{y}^{(\arm)}_{#1}}

\newcommand{\ycent}[1]{y_{#1}^{\mathrm{c}}}

\newcommand{\rem}{m_t}
\newcommand{\nass}{\ell_t}
\newcommand{\Sprox}[1]{\cS^{\mathrm{prox}}_{#1}}

\newcommand{\Phidel}{\Phi^{\mathrm{del}}}
\newcommand{\Phiins}{\Phi^{\mathrm{ins}}}

\newcommand{\CR}{\widetilde{\mathrm{CR}}}

\newcommand{\EmpVar}{\mathrm{Var}_{\mathrm{emp}}}
\newcommand{\DIM}{\mathrm{DiM}}
\newcommand{\ols}{\mathrm{OLS}}

\DeclareMathOperator*{\esssup}{\mathrm{ess} \sup}

\newcommand{\Xa}{X_{\arm}}
\newcommand{\ya}{y_{\arm}}

\newcommand{\Ca}{C_{\arm}}

\newcommand{\xbara}{\bar{x}_{\arm}}

\newcommand{\ybara}{\bar{y}_{\arm}}
\newcommand{\onea}{\boldsymbol{1}_{\na}}

\newcommand{\Unif}{\mathrm{Unif}}

\theoremstyle{plain}
\newtheorem{theorem}{Theorem}
\newtheorem{proposition}[theorem]{Proposition}
\newtheorem{lemma}{Lemma}[section]
\newtheorem{corollary}[theorem]{Corollary}
\newtheorem{definition}{Definition}[section]
\newtheorem{example}{Example}
\newtheorem{assumption}{Assumption}
\theoremstyle{remark}
\newtheorem{remark}{Remark}[section]

%
%

%
%




\begin{document}
%

%

\twocolumn[

\aistatstitle{Design-Based Finite-Sample Analysis for Regression Adjustment}

\aistatsauthor{Dogyoon Song}

\aistatsaddress{University of California, Davis} ]

\begin{abstract}
    In randomized experiments, regression adjustment can improve the precision of average treatment effect (ATE) estimation using covariates without requiring a correctly specified outcome model. 
    Although well studied in low-dimensional settings, its behavior in high-dimensional regimes, where the number of covariates $p$ may exceed the number of observations $n$, remains underexplored. 
    Moreover, existing analyses are largely asymptotic, providing limited guidance for finite-sample inference. 
    We develop a design-based, non-asymptotic framework for analyzing the regression-adjusted ATE estimator under complete randomization. 
    This yields finite-sample-valid confidence intervals with explicit, instance-adaptive widths, even when $p > n$. 
    While these intervals rely on oracle (population-level) quantities, we also outline data-driven envelopes computable from observed data. 
    Our approach hinges on a refined swap sensitivity analysis of an estimator: stochastic fluctuation is controlled via a variance-adaptive Doob martingale and Freedman's inequality, and design bias is bounded by Stein's method of exchangeable pairs. 
    The analysis elucidates how covariate geometry governs concentration and bias of the adjusted estimator, suggesting when and how regression adjustment can be effective. 
\end{abstract}

\section{INTRODUCTION}

Randomized experiments are a central tool for causal inference across scientific domains; see, e.g., \citet{fisher1935design, neyman, kempthorne1952design} for foundational developments and \citet{imbens2015causal, rosenberger2015randomization, ding2024first} for modern textbook treatments.
In a treatment–control experiment with units $i\in[n]$ and potential outcomes $\{(y_i^{(1)},y_i^{(0)})\}_{i=1}^n$, the average treatment effect (ATE) $\tau = n^{-1} \sum_{i=1}^n ( y_i^{(1)} - y_i^{(0)} )$ is often the target of estimation. 
Two inferential viewpoints are common. 
Following \citet{neyman}, the \emph{design-based} (finite-population) perspective treats the potential outcomes as fixed, with randomness solely arising from the experimenter's treatment assignment \citep{aronow2014sharp, dasgupta2015causal,fogarty2018regression,mukerjee2018using, li2020rerandomization}.
By contrast, the superpopulation view models units as i.i.d. draws from a hypothetical infinite population \citep{tsiatis2008covariate, cattaneo2018inference}. 
We adopt the design-based, finite-population perspective to foreground and study the role of randomization without positing an outcome‑generating model.

A natural baseline for ATE estimation is the difference-in-means (DiM) estimator $\hat\tau_{\mathrm{DiM}}$, which is unbiased under complete randomization with a known variance formula \citep{neyman}.
When pre-treatment covariates $x_i\in\RR^p$ are predictive of outcomes, regression adjustment (RA) can improve precision by incorporating them.
Classical analysis of covariance (ANCOVA) often assumes a constant treatment effect, under which the ordinary least squares (OLS) of $Y_i$ on $(T_i,x_i)$ with a treatment indicator $T_i$ is a traditional approach \citep{fisher1935design,kempthorne1952design,hinkelmann2007design}.
\citet{freedman2008regression} highlighted three concerns: (i) RA can be less efficient than DiM under heterogeneity; (ii) usual OLS standard errors can be inconsistent; and (iii) the OLS estimator has $O_p(n^{-1})$ bias.
In fixed dimensions these issues are resolved by including all treatment–covariate interactions (equivalently, two arm‑wise OLS fits): 
\citet{lin_ols2013} shows the resulting estimator is consistent, asymptotically normal, and at least as efficient as DiM; \citet{chang2024exact} provide exact bias corrections with and without interaction.

Modern experiments often entail many covariates, and fixed‑$p$ asymptotics may fail to capture the behavior of RA. 
Recent work extends design‑based asymptotics to increasing‑$p$ regimes, e.g., asymptotic normality up to $p=o(n^{1/2})$, and then up to $p=o(n^{2/3})$ with a bias correction \citep{lei2021regression}, or further up to $p=o(n^{3/4})$ with cross‑fitting \citep{chiang2023regression}.
A complementary line of work considers high‑dimensional settings and provides guarantees under sparsity or semiparametric structure \citep{bloniarz2016lasso,wager2016high,chernozhukov2018double}. 
Nevertheless, a \emph{design-based}, \emph{finite-sample} theory for RA that remains valid in high‑dimensional and interpolating regimes---without additional ad hoc techniques or modeling assumptions---remains scarce. 
Moreover, practitioners typically face a single finite dataset, so asymptotic assurances alone may not offer actionable uncertainty quantification.

\paragraph{Contributions.}
In this paper, we develop a design-based, non‑asymptotic framework for analyzing ATE estimators under complete randomization. 
Our contributions in this paper are four‑fold.

\begin{enumerate}[label=(\roman*)]
    \item
    \emph{Design concentration and bias bounds} (Section \ref{sec:finite_sample_analysis}).  
    Viewing an ATE estimator as a function $f(\cS)$ of the treatment set $\cS$, we derive a \emph{variance‑adaptive} concentration inequality using swap‑sensitivity analysis coupled with a Doob martingale and Freedman’s inequality. 
    We also obtain a \emph{design‑bias} upper bound via Stein’s method of exchangeable pairs on the Johnson graph. 
    Both results apply to general set functions beyond ATE estimators.

    \item
    \emph{Finite-sample-valid inference} (Section \ref{sec:main-result-oracle}). 
    Combining these bounds yields an instance‑adaptive error radius $\varepsilon_{\cS}$ such that, for any $\delta \in (0,1)$,
    \[
        \Pr \bigl( |\hat\tau(\cS)-\tau| > \varepsilon_{\cS} \bigr) \leq \delta,
    \]
    producing finite‑sample‑valid $(1 - \delta)$ confidence intervals under complete randomization, without requiring asymptotic limits (Theorem~\ref{thm:oracle}).

    \item 
    \emph{Oracle swap sensitivity and covariate $Q$-geometry} (Section \ref{sec:main_OLS_RA}).
    Specializing to OLS adjustment, we derive a closed‑form \emph{paired deletion–insertion} identity (Theorem \ref{thm:perturbation_identity}) for the \emph{oracle} one‑swap change $\Delta_{ij}\hat\tau_{\mathrm{OLS}}(\cS)$ with respect to exchanging units $i\leftrightarrow j$ by applying rank‑one pseudoinverse updates to the ratio‑of‑quadratics representation of the intercepts (Proposition~\ref{prop:mu_ratio}). 
    The resulting formulas suggest that the covariate $Q$‑geometry governs sensitivity via leverages and standardized residual energy.

    \item 
    \emph{Toward data‑driven proxies} (Appendix \ref{sec:additional_discussion}). 
    We isolate the observable $Q$-geometric factors that enter these oracle envelopes and discuss how they may serve as practical proxies; establishing fully data-driven insertion envelopes is left as future work.
\end{enumerate}

\paragraph{Related Work.}
Regression adjustment, dating back to Fisher's ANCOVA, has long been used to improve precision in randomized experiments by leveraging pre-treatment covariates \citep{fisher1935design,kempthorne1952design,imbens2015causal}. 
\citet{freedman2008regression} criticized pitfalls of RA (efficiency loss under heterogeneity, inconsistent OLS standard errors, and $O_p(n^{-1})$ bias), while \citet{lin_ols2013} advocated fitting all treatment--covariate interactions (equivalently, arm-wise OLS with an intercept) to achieve asymptotic normality and non-inferiority to the difference-in-means (DiM) baseline. 
Recent refinements provide exact bias corrections and sharpen inference in complete randomization \citep[]{chang2024exact, fogarty2018regression}.

Beyond the classical fixed-$p$ regime, recent work establishes design-based asymptotics for RA as $p$ grows with $n$, including asymptotic normality up to $p=o(n^{1/2})$, extensions to $p=o(n^{2/3})$ with a degree-0 bias correction, and to $p=o(n^{3/4})$ via cross-fitting, under leverage regularity conditions \citep{lei2021regression,chiang2023regression}. 
This line of work is further extended to establish asymptotic normality up to $p=o(n^{\tfrac{d+2}{d+3}})$ with degree-$d$ corrections for all $d \geq 0$ \citep{song2025neumann}. 
High-dimensional methods under sparsity or semiparametric structure (e.g., lasso and double ML) also provide complementary guarantees, albeit typically in superpopulation or structured settings \citep{bloniarz2016lasso,chernozhukov2018double}. However, finite-sample, design-based guarantees that remain valid when $p \ge n$, without additional modeling assumptions, are still scarce; this paper aims to bridge this gap.

For the analysis, we use Freedman's inequality \citep{freedman1975tail} together with a Doob martingale tailored to sampling without replacement, to obtain a variance-adaptive concentration bound. 
We bound the design bias via Stein's method of exchangeable pairs on the Johnson graph of treatment assignment sets \citep{ross2011fundamentals, chen2010normal, delsarte1973algebraic,levin2017markov}. 
For OLS-RA, we derive exact swap sensitivity formulas from rank-one pseudoinverse updates applied to a ratio-of-quadratics representation of the OLS intercepts, exposing the associated covariate geometry \citep{greville1960some,ben2003generalized}.

\paragraph{Organization.} 
Section~\ref{sec:setup} introduces the setup and design-based preliminaries for ATE estimation and RA. 
Section~\ref{sec:finite_sample_analysis} decomposes the estimation error into stochastic fluctuation and design bias, yielding finite‑sample bounds for each (Propositions~\ref{prop:concentration} and \ref{prop:stein-bias}). 
Section~\ref{sec:main_results} establishes an oracle finite‑sample‑valid inferential guarantee (Theorem~\ref{thm:oracle}); Section~\ref{sec:main_OLS_RA} then specializes to OLS-RA and examines how covariate geometry affects RA. 
Section~\ref{sec:experiments} and Appendix~\ref{sec:deferred_experiments} report numerical experiments, and Section~\ref{sec:conclusion} concludes.

\section{SETUP AND BACKGROUND}\label{sec:setup}

\paragraph{Notation.} 
Let $\RR$ denote the set of real numbers, and $\ZZ$ the set of integers. 
Let $\ZZnn \coloneqq \{ z \in \ZZ: z \geq 0\}$ and $\ZZ_{> 0} \coloneqq \{ z \in \ZZ: z > 0\}$.
For $n \in \ZZp$, write $[n] \coloneqq \{1, \dots, n\}$. 
A vector $v \in \RR^n$ is identified with its column vector representation $v \in \RR^{n \times 1}$. 
For a matrix $M$, $M^{\top}$ denotes its transpose, and $M^{-1}$ its inverse (when it exists). 
For readability, we use calligraphic letters to denote sets (e.g., $\cS$). 
We write $\bone_n \coloneqq (1, \cdots, 1) \in \RR^n$ and $\Ind_E$ for the indicator that equals 1 if and only if $E$ is true (and 0 otherwise).

\subsection{Average Treatment Effect Estimation}
\paragraph{Finite Population and Potential Outcomes.}
We consider a finite population of $n$ units. 
Each unit $i\in[n]$ has two potential outcomes $y_i^{(\arm)}$ for $\arm\in\{0,1\}$, where $y_i^{(1)}$ is the treated outcome and $y_i^{(0)}$ is the control outcome. 
Collect $y^{(\arm)}=(y_1^{(\arm)},\dots,y_n^{(\arm)})\in\RR^n$.

\paragraph{Randomized Experiments.}
A completely randomized design selects a treatment set $\Sone \subset [n]$ of a fixed size $n_1$ uniformly at random; the control set is $\Szero \coloneqq [n] \setminus \Sone$, with $n_0 = |\Szero| = n - n_1$. 
Let $T_i \coloneqq \Ind\{ i \in \Sone \} \in \{0,1\}$ denote the treatment indicator. 
Then the observed outcome is
\begin{equation}\label{eqn:observed_model}
    Y_i = T_i y_i^{(1)} + (1 - T_i) y_i^{(0)}. 
\end{equation}
Randomness arises \emph{only} from the treatment assignment (that is, $y^{(1)}, y^{(0)}$ are deterministic). 

\medskip

\begin{remark}
    Note that in our setup, the so-called stable unit treatment value assumption (SUTVA) is implicitly imposed by \eqref{eqn:observed_model}, which posits  (i) ``no interference'' (each unit’s potential outcomes do not depend on other units’ assignments) and (ii) ``consistency'' (there are no other versions of treatment) conditions. 
\end{remark}

For each $\arm \in \{0,1\}$, let $S_{\arm} \in \{0,1\}^{n_{\arm} \times n}$ be the selection matrix such that $(S_{\arm})_{ij} = 1$ if and only if $j$ is the $i$-th smallest element in $\Sa$. 
Define $\ya=S_{\arm} y^{(\arm)}$, the \emph{observed} subvector of $y^{(\arm)}$ corresponding to $\Sa$.

\paragraph{Average Treatment Effect (ATE) Estimation.}
The finite-population average treatment effect is
\[
    \tau = \frac{1}{n} \sum_{i=1}^n (y^{(1)}_i - y^{(0)}_i).
\]
The difference-in-means (DiM) estimator of ATE is
\begin{equation}\label{eqn:DiM}
    \hat\tau_{\mathrm{DiM}} = \frac{1}{n_1}\sum_{i\in\Sone}y_i^{(1)} - \frac{1}{n_0}\sum_{i\in\Szero}y_i^{(0)}.
\end{equation}
Together with complete randomization, SUTVA makes the finite‑population ATE identifiable and the difference‑in‑means (DiM) estimator unbiased.

\subsection{Regression Adjustment}\label{sec:reg_adjustment}
\paragraph{Covariates, Design and Normalization.}
Each unit $i$ carries pre-treatment covariates $x_i\in\RR^p$. 
The design matrix refers to $X=\begin{bmatrix}x_1&\cdots&x_n\end{bmatrix}^\top\in\RR^{n\times p}$. 

\setcounter{assumption}{-1}
\begin{assumption}\label{assump:covariates}
    Suppose that
    \begin{enumerate}
        \item 
        $\bone_n^\top X=0$, i.e., columns of $X$ are centered, and
        \item 
        $X^\top X=nI_p$, i.e., columns of $X$ are orthogonal and have norm $\sqrt{n}$.
    \end{enumerate}
\end{assumption}

\begin{remark}
    Assumption \ref{assump:covariates} is only used for Section \ref{sec:setup}. 
\end{remark}

For $\arm \in \{0,1\}$, let $(\mu_{\arm},\beta_{\arm})$ denote the \emph{population} ordinary least squares (OLS) coefficients from regressing $y^{(\arm)}$ on $[\bone_n,X]$, and define the population residuals
\begin{equation}\label{eqn:population_OLS}
    \res^{(\arm)}  \coloneqq  y^{(\arm)} - \mu_{\arm} \bone_n - X\beta_{\arm},
        \qquad \arm\in\{0,1\}.
\end{equation}
By OLS orthogonality, $\bone_n^\top \res^{(\arm)}=0$ and $X^\top \res^{(\arm)}=0$. 
Computing the population OLS coefficients $(\mu_{\arm},\beta_{\arm})$ is only a notional device to aid in our analysis; it requires access to the potential outcome $y^{(\arm)}$ for the entire population, but we only observe $\ya = S_{\arm} y^{(\arm)}$.

\paragraph{Arm-wise OLS with Intercept.} 
For each $\arm \in \{0,1\}$, let $(\hat\mu_{\arm},\hat\beta_{\arm})$ be the arm-wise OLS coefficients obtained by fitting observed data:
\begin{equation*}
    (\hat\mu_{\arm},\hat\beta_{\arm}) \in \arg\min_{(\mu, \beta)} \left\{ \frac{1}{n_{\arm}} \sum_{i \in \Sa}\bigl( y^{(\arm)}_i - \mu - x_i^{\top} \beta  \bigr)^2 \right\}.
\end{equation*}
For each $\arm \in \{0,1\}$, define $\Xa=S_{\arm}X$ as the submatrix of $X$ that retains only the rows with indices in $\Sa$, and define arm-wise means 
\begin{equation}\label{eqn:armwise_means}
    \xbara\coloneqq \frac{1}{n_{\arm}} \Xa^\top\onea
    \qquad\text{and}\qquad
    \ybara\coloneqq \frac{1}{n_{\arm}} \onea^\top \ya.
\end{equation}
Define the arm-wise (row-) centering projection matrix $\Ca \coloneqq I_{n_{\arm}} - \frac{1}{n_{\arm}}\onea\onea^\top$ so $\Ca\onea = 0$.
Then $\Ca X_{\arm} = X_{\arm} - \onea \xbara^{\top}$, and the arm-wise sample covariance 
\begin{equation}\label{eqn:armwise_covariance}
    \Sigma_{\arm} 
        \coloneqq  \frac{1}{n_{\arm}}  \sum_{i \in \Sa} ( x_i - \xbara )( x_i - \xbara )^{\top}
        = \frac{1}{n_{\arm}} \Xa^\top \Ca \Xa .
\end{equation}
In the classical regime where \(\Sigma_{\arm}\) is invertible, standard OLS-with-intercept algebra gives
\begin{equation*}
    \hat\beta_{\arm} = \Sigma_{\arm}^{-1} \Bigl(\frac{1}{n_{\arm}}\Xa^\top \Ca \ya\Bigr),
    \qquad
    \hat\mu_{\arm} = \ybara - \xbara^\top \hat\beta_{\arm}.
\end{equation*}
Substituting $\ya=\mu_{\arm}\onea+\Xa\beta_{\arm}+S_{\arm} \res^{(\arm)}$ and using $\Ca\onea=0$ and $X_{\arm}^\top\Ca\onea=0$ yields
\begin{equation} \label{eq:beta-sample-minus-pop}
    \hat\beta_{\arm} = \beta_{\arm} + \Sigma_{\arm}^{-1} \Bigl(\frac{1}{n_{\arm}}\Xa^\top \Ca S_{\arm} \res^{(\arm)}\Bigr). 
\end{equation}
Finally,
\begin{equation}
    \hat\mu_{\arm} - \mu_{\arm} = \underbrace{\frac{1}{n_{\arm}}\onea^\top S_{\arm} \res^{(\arm)}}_{\text{residual mean in arm $\arm$}}
    - 
    \xbara^\top\bigl(\hat\beta_{\arm}-\beta_{\arm}\bigr).
    \label{eq:mu-deviation}
\end{equation}

\paragraph{Regression Adjustment.}
Recall the population residuals in \eqref{eqn:population_OLS}. 
Since $\bone_n^\top X=0$ and $\bone_n^\top \res^{(\arm)}=0$, 
\begin{equation}\label{eqn:ols_armwise}
    \tau
        =\frac{1}{n}\bone_n^{\top} y^{(1)} - \frac{1}{n}\bone_n^{\top} y^{(0)} 
        = \mu_1-\mu_0.
\end{equation}
The regression-adjusted ATE estimator via OLS (or, the OLS-adjusted ATE estimator) refers to a natural plug-in estimator
\begin{equation}\label{eqn:RA_OLS}
    \hat\tau_{\mathrm{OLS}}\coloneqq \hat\mu_1-\hat\mu_0.
\end{equation}

\subsection{Asymptotics of Regression Adjustment}
For fixed $p$, OLS adjustment with treatment-covariate interactions (equivalently, arm-wise OLS with intercept as in \eqref{eqn:RA_OLS}) is a classical method that is asymptotically non-inferior to DiM under standard regularity. 
In particular, \citet[Theorem 1]{lin_ols2013} show
\[
    \sqrt{n}( \hat\tau_{\mathrm{OLS}}-\tau ) \ \xRightarrow{d}\ \cN\bigl(0, \sigma^2_{\mathrm{res}}\bigr),
    \quad
    \sigma^2_{\mathrm{res}} \coloneqq \frac{v_1}{\rho}+\frac{v_0}{1-\rho}-v_\tau
\]
where
$v_{\arm} \!\coloneqq \lim_{n \to \infty} \!\frac{1}{n}\sum_{i=1}^n\bigl(r^{(\arm)}_i\bigr)^2$ for each $\arm \in \{0,1\}$, and $v_\tau\! \coloneqq \lim_{n \to \infty} \!\frac{1}{n}\sum_{i=1}^n\bigl(r^{(1)}_i \!-\! r^{(0)}_i\bigr)^2$. 
For $p$ growing with $n$, \citet{lei2021regression} establish asymptotic normality (AN) for OLS--RA up to $p=o(n^{1/2})$ (modulo logs), and extend AN to $p=o(n^{2/3})$ with de-biasing.

At the core of these classical fixed-\(p\) analyses is the decomposition obtained by combining \eqref{eq:beta-sample-minus-pop}--\eqref{eq:mu-deviation} and $\tau=\mu_1-\mu_0$:
\begin{equation}\label{eq:master-decomp}
    \hat\tau_{\mathrm{OLS}} - \tau
        = \underbrace{ \frac{1}{n_1} \bone_{n_1}^\top S_1 r^{(1)} - \frac{1}{n_0} \bone_{n_0}^\top S_0 r^{(0)}}_{\eqqcolon \hresdim;~~\text{Residual DiM}}
        - \underbrace{\Bigl(\sfR_1-\sfR_0\Bigr)}_{\text{Remainder}}
\end{equation}
where the arm-wise remainder for each $\arm \in \{0,1\}$ is
\begin{equation}\label{eqn:neumann_remainder}
    \sfR_{\arm} 
        \coloneqq \bar{x}_{\arm}^{\top} \bigl( \hat{\beta}_{\arm} - \beta_{\arm} \bigr) 
        = \bar x_{\arm}^\top \Sigma_{\arm}^{-1} \Bigl(\frac{1}{n_{\arm}}\Xa^\top \Ca S_{\arm} \res^{(\arm)}\Bigr).
\end{equation}
A sampling-without-replacement central limit theorem (e.g., \citet{hajek1960limiting}) yields $\sqrt{n} \cdot \hresdim \ \xRightarrow{d}\ \cN\bigl(0, \sigma^2_{\mathrm{res}}\bigr)$; if the remainder is $o_P(n^{-1/2})$, the AN of $\hat\tau_{\mathrm{OLS}}-\tau$ follows. 
Thus, ensuring $\sfR_1 - \sfR_0 = o_P(n^{-1/2})$ provides a sufficient condition for asymptotic normality.

\paragraph{Gaps and Motivation.}
Despite strong asymptotic results for regression adjustment, several gaps remain.

Existing guarantees based on the residual–DiM decomposition \eqref{eq:master-decomp} are fundamentally \emph{asymptotic}: they establish pointwise AN and consistency of variance estimators, but do not by themselves furnish \emph{finite‑sample, design‑based} confidence intervals with explicit, verifiable constants. Moreover, in high‑dimensional or interpolating regimes, OLS can nearly interpolate outcomes, making residuals small and the residual–DiM route numerically unstable (or infeasible); then the remainder $\sfR_1-\sfR_0$ need not be negligible without additional control.
Alternatives that rely on sparsity, cross‑fitting/double ML, or superpopulation modeling provide asymptotic guarantees but again lack finite‑sample, design‑based error control. 

To our knowledge, there remains a paucity of \emph{non‑asymptotic}, finite‑population results that (a) operate under complete randomization, (b) remain valid when $p$ is comparable to or exceeds the arm sizes, and (c) yield ready‑to‑use confidence intervals with explicit constants.
\section{FINITE-SAMPLE ANALYSIS OF CONCENTRATION AND BIAS}\label{sec:finite_sample_analysis}


We depart from the residual-DiM decomposition \eqref{eq:master-decomp} and work directly with the minimum-norm OLS estimator: least squares (LS) with an unpenalized intercept and the minimum-$\ell_2$-norm slope among all LS minimizers. 
This estimator admits a closed‐form intercept as a ratio of quadratic forms, valid in both classical and over‑parameterized regimes.

\begin{definition}\label{def:min_norm_OLS}
    Given $X \in \RR^{n \times p}$ and $y \in \RR^n$, the minimum-norm OLS estimator (with unpenalized intercept) of $(X,y)$, denoted by $\OLS(X,y)$, is the unique minimizer
    \begin{equation}\label{eqn:min_norm_OLS}
    \begin{aligned} 
        (\hat{\mu}, \hat{\beta}) &\in \argmin_{(\mu,\beta) \in \cS_{X,y}} \| \beta\|_2^2, \qquad\text{where}\\
        \cS_{X,y} &\coloneqq \argmin_{(\mu,\beta) \in \RR \times \RR^p} \| y - X\beta - \mu \bone_n \|_2^2. 
    \end{aligned}
    \end{equation}
\end{definition}

\begin{remark}
    When $\cS_{X,y}$ is a singleton (e.g., if $n \geq p$ and $\bone_n \not\in \colsp(X)$), the formulation \eqref{eqn:min_norm_OLS} reduces to the standard OLS with intercept.
\end{remark}

\begin{proposition}\label{prop:mu_ratio}
    Let $X \in \RR^{n \times p}$ and $y \in \RR^n$, and let $(\hat{\mu}, \hat{\beta}) = \OLS(X, y)$. 
    Then
    \begin{equation}\label{eq:mu-ratio}
        \hat{\mu} = \frac{ \bone_{n}^{\top} Q_{X}\, y }{ \bone_{n}^{\top} Q_{X}\, \bone_{n} },
    \end{equation}
    with
    \begin{equation}\label{eq:Qa-piecewise}
        Q_X = 
        \begin{cases}
            M_X \coloneqq I_n - X X^{\dagger},  & \text{if } \bone_n \not\in \colsp(X),\\
            K_X \coloneqq (X X^{\top})^{\dagger},  & \text{if } \bone_n \in \colsp(X).
        \end{cases}
    \end{equation}
\end{proposition}

\begin{remark}
    $X X^{\dagger}$ is the orthogonal projection matrix onto $\colsp(X)$, so-called the ``hat matrix.'' 
    $X X^{\top}$ is the Gram matrix of the row vectors $\{ x_1, \dots, x_n \}$. 
    The denominator in \eqref{eq:mu-ratio} is strictly positive, i.e., $\bone_n^\top Q_X \bone_n > 0$, because $Q_X$ is positive semidefinite and $\bone_n\notin\ker(Q_X)$ in both cases of \eqref{eq:Qa-piecewise}. 
    See Appendix \ref{sec:proof_proposition.quadratic} for a proof of Proposition \ref{prop:mu_ratio}.
\end{remark}

Recall that $\hat\tau_{\ols}\coloneqq \hat\mu_1-\hat\mu_0$ with arm-wise intercepts, cf. \eqref{eqn:RA_OLS}. 
Thus, $\hat\tau_{\ols}$ is a function of the random arm index sets $\Sa$ via $\hat{\mu}_{\arm}$, which depend on $\Sa$ through $Q_{\Xa}$ and $\ya$. 
Intuitively, if $\hat\tau_{\ols}$ is not very sensitive to a change in $\Sa$, then $\hat\tau_{\ols}$ concentrates around $\mathbb{E}[ \hat\tau_{\ols} ]$, which may still differ from the true ATE $\tau$ though.

\paragraph{Section Overview.} 

In what follows, we develop a design-based, non-asymptotic analysis for ATE estimators under complete randomization. 
Let $n \in \ZZnn$, $m \in [n]$, and $f: \binom{[n]}{m} \to \RR$ (e.g., $f = \hat\tau_{\ols}- \tau$). 
We seek a high-probability upper bound for $| f(\cS) |$. 
Note
\[
    | f(\cS) | \leq | f(\cS) - \bbE f(\cS) | + |\bbE f(\cS) |.
\]
Instantiating $f = \hat\tau_{\ols}- \tau$, the first term $f(\cS) - \bbE f(\cS) = \hat{\tau} - \bbE \hat{\tau}$ represents the stochastic fluctuation of the ATE estimator $\hat{\tau}$, and the second term $\bbE f(\cS) = \bbE \hat{\tau} - \tau$ is the bias of $\hat{\tau}$. 
In what follows, we describe our approach to control these two separately:
\begin{itemize}
    \item 
    Section \ref{sec:sensitivity_concentration}: 
    a high-probability bound on $\bigl| f(\cS) - \bbE f(\cS) \bigr|$ via concentration based on swap-sensitivity analysis with a variance-adaptive Doob martingale,
    \item 
    Section \ref{sec:bias_exchangeable}: 
    a deterministic upper bound on $\bigl| \bbE f(\cS) \bigr|$ via Stein's method of exchangeable pairs.
\end{itemize}
Together, these yield a high-probability upper bound on the estimation error $\bigl| \hat\tau_{\ols} - \tau \bigr|$, enabling finite‑sample‑valid confidence intervals.

\subsection{Concentration via Sensitivity Analysis}\label{sec:sensitivity_concentration}

We derive a high‑probability bound for $|f(\cS)-\bbE f(\cS)|$ under $\cS\sim\Unif\binom{[n]}{m}$.

\paragraph{Swap Sensitivity.} 
Recall the treatment set $\Sone \subset [n]$ is drawn uniformly at random from $\binom{[n]}{n_1}$, and the control set $\Szero = [n] \setminus \Sone$. 
The ATE estimators, such as $\hat\tau_{\DIM}$ in \eqref{eqn:DiM} and $\hat\tau_{\ols}$ in \eqref{eqn:RA_OLS}, are functions of the random subset $\Sone$. 
We quantify the sensitivity of such a random function to a swap operation $(i\leftrightarrow j)$.

\begin{definition}
    Let $n \in \ZZnn$, $m \in [n]$, and $f: \binom{[n]}{m} \to \RR$. 
    For any $\cS \in \binom{[n]}{m}$ and any $(i, j) \in \cS \times ([n]\setminus \cS)$, the \emph{(i,j)-swap sensitivity of f at $\cS$} is
    \begin{equation}\label{eqn:perturbation}
        \Delta_{ij} f(\cS) \coloneqq f \bigl( \cS^{(i\leftrightarrow j)}\bigr) - f(\cS)
    \end{equation}
    where
    \[
        \cS^{(i\leftrightarrow j)} \coloneqq \bigl( \cS \setminus \{i\} \bigr) \cup \{j\}.
    \]
\end{definition}

\begin{example}\label{example:DiM_sensitivity}
    For the difference-in-means estimator,
    \begin{align*}
        \Delta_{ij} \htaudim  = \frac{1}{n_1} \bigl( y^{(1)}_j - y^{(1)}_i \bigr) + \frac{1}{n_0} \bigl( y^{(0)}_j - y^{(0)}_i \bigr).
    \end{align*}
\end{example}

Computing the swap sensitivity of $\htauols$ is more involved; see Proposition \ref{prop:sensitivity_decomposition} and Theorem \ref{thm:perturbation_identity} for this.

\paragraph{Naive Bounded Differences Are Suboptimal.}
A (McDiarmid‑type) permutation bounded differences argument controls $|\Delta_{ij}f(\cS)|$ by a global Lipschitz envelope $c_{\mathrm{BD}}$, yielding tail bounds of the form $\exp\{-\Theta(t^2/(m\,c_{\mathrm{BD}}^2))\}$. 
This is typically suboptimal because $c_{\mathrm{BD}}$ reflects a worst‑case range (e.g., $\max_{i,j}|y^{(\arm)}_i-y^{(\arm)}_j|$) rather than the variance scale, and it ignores the correlations/cancellations across the swaps that connect realizable sets $\cS_1\leftrightarrow \cS_1'$. 
In contrast, in the asymptotic analysis of the DiM, the dispersion is governed by the variance proxy underlying $\sigma_{\mathrm{res}}^2=\frac{v_1}{\rho}+\frac{v_0}{1-\rho}-v_\tau$, often much smaller than worst‑case ranges. 
Therefore, a variance‑adaptive construction is needed for a useful concentration bound.

\paragraph{Assignment-exposure Martingale.} 
Encode complete randomization by a uniform permutation $\Pi$ of $[n]$, with $\Sone=\{\Pi(1),\dots,\Pi(n_1)\}$. 
Define the \emph{assignment-exposure filtration and martingale}
\begin{align}
    \cF_t &\coloneqq \sigma\big(\Pi(1),\ldots,\Pi(t)\big),\qquad t=0,1,\ldots,n_1,
        \label{eqn:filtration}\\
    M_t &\coloneqq \bbE[f(\Sone)\mid \cF_t].
        \label{eqn:martingale}
\end{align}
Then $M_t$ is a martingale with respect to $\cF_t$, and $D_t \coloneqq M_t - M_{t-1}$ is a martingale difference sequence. 
Furthermore, $\bbE[D_t\mid \cF_{t-1}]=0$ and
\[
    f(\Sone) - \bbE f(\Sone) = \sum_{t=1}^{n_1} D_t.
\]
Define the set of treated indices revealed up to step $t-1$, and the remaining pool at $t-1$: for $t \in [n_1]$,
\begin{equation}\label{eqn:past_remaining_sets}
    \cS^{\mathrm{past}}_{t-1} \coloneqq \{\Pi(1),\ldots,\Pi(t-1)\},
    \qquad
    \cR_{t-1} \coloneqq [n] \setminus \cS^{\mathrm{past}}_{t-1}.
\end{equation}
Let 
\[
    \rem \coloneqq |\cR_{t-1}| = n - (t-1), \qquad
    \nass \coloneqq n_1-(t-1),
\]
denote the number of total remaining units and the number of treated units not revealed yet, respectively.

\medskip

\begin{remark}
    Although $\cF_t$ reveals treated indices only, conditioning on $\cF_{t-1}$ also updates the law of (projected) terminal control set, which is uniform over the $n_0$-subsets of $\cR_{t-1}$.
    Thus both $\bbE[\hat\mu_1(\Sone)\mid\cF_t]$ and $\bbE[\hat\mu_0(\Szero)\mid\cF_t]$ evolve with $t$. 
    For example, for DiM, $\bbE[\hat\mu_0\mid\cF_{t-1}]$ is the average of $y^{(0)}$ over $\cR_{t-1}$ and shifts by a hypergeometric update when $I=\Pi(t)$ is revealed.
\end{remark}

Next, we express the increment $D_t$ as the average of a single paired swap around a feasible proxy set. 
Let $I \coloneqq \Pi(t)$. 
Let $J$ be uniform on $\cR_{t-1}\setminus\{I\}$ and $\cT$ uniform on $\binom{\cR_{t-1}\setminus\{I,J\}}{\nass-1}$, conditionally on $(\cF_{t-1},I)$. 
We define the proxy set of $\Sone$ based on information up to $t-1$
\begin{equation}\label{eqn:prox_set}
    \Sprox{t-1}(I, \cT) \coloneqq \cS^{\mathrm{past}}_{t-1} \cup \{I\} \cup \cT.
\end{equation}

\begin{proposition}
\label{prop:reveal-swap}
    Fix $t\in\{1,\ldots,n_1\}$ and let $I=\Pi(t)$.
    Conditional on $(\cF_{t-1},I)$, draw
    \begin{equation}\label{eqn:JT_dist}
        J\sim\Unif\big(\cR_{t-1}\setminus\{I\}\big),\quad
        \cT\sim\Unif\binom{\cR_{t-1}\setminus\{I,J\}}{\nass-1}.
    \end{equation}
    Then the martingale increment 
    \begin{align}
        D_t &=\bbE[f(\Sone)\mid\cF_t]-\bbE[f(\Sone)\mid\cF_{t-1}]   \notag\\
            &= - \alpha_t\cdot \bbE\Big[\,  \Delta_{IJ}\,f\big(\Sprox{t-1}(I,\cT)\big)  \bigm|  \cF_{t-1},\,I \,\Big],  \label{eq:rev-swap-avg}
    \end{align}
    where $\alpha_t = \frac{\rem-\nass}{\rem}=\frac{n_0}{n-t+1}\in(0,1)$. 
\end{proposition}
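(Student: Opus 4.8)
The plan is to compute the Doob increment $D_t = M_t - M_{t-1}$ directly from the conditional laws of the treated set and then recognize the result as an averaged single swap. First I would record the two conditional distributions. Given $\cF_{t-1}$, the as-yet-unrevealed treated units form a uniform $\nass$-subset $\cU$ of $\cR_{t-1}$, so $\Sone = \cS^{\mathrm{past}}_{t-1}\cup\cU$ and $M_{t-1} = \bbE[f(\cS^{\mathrm{past}}_{t-1}\cup\cU)\mid\cF_{t-1}]$. Given $\cF_t$, i.e.\ additionally revealing $I=\Pi(t)\in\Sone$, the remaining treated units form a uniform $(\nass-1)$-subset $\cT'$ of $\cR_{t-1}\setminus\{I\}$, so $M_t = \bbE[f(\cS^{\mathrm{past}}_{t-1}\cup\{I\}\cup\cT')\mid\cF_{t-1},I]$. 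Since $I$ is uniform on $\cR_{t-1}$ given $\cF_{t-1}$, I would condition the law of $\cU$ on the event $\{I\in\cU\}$: with probability $\nass/\rem$ we have $I\in\cU$ and $\cU\setminus\{I\}$ is uniform over $(\nass-1)$-subsets of $\cR_{t-1}\setminus\{I\}$ (reproducing $M_t$), while with probability $(\rem-\nass)/\rem$ we have $I\notin\cU$ and $\cU$ is uniform over $\nass$-subsets of $\cR_{t-1}\setminus\{I\}$. This gives the hypergeometric split $M_{t-1} = \tfrac{\nass}{\rem}M_t + \tfrac{\rem-\nass}{\rem}\,\bbE[f(\cS^{\mathrm{past}}_{t-1}\cup\cV)]$ with $\cV\sim\Unif\binom{\cR_{t-1}\setminus\{I\}}{\nass}$, and hence $D_t = \alpha_t\big(M_t - \bbE[f(\cS^{\mathrm{past}}_{t-1}\cup\cV)]\big)$ with $\alpha_t = (\rem-\nass)/\rem$.

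It then remains to realize the bracketed difference as an expected $(I\leftrightarrow J)$ swap. I would introduce the coupling of the proposition: draw $J\sim\Unif(\cR_{t-1}\setminus\{I\})$ and $\cT\sim\Unif\binom{\cR_{t-1}\setminus\{I,J\}}{\nass-1}$, so that $\Sprox{t-1}(I,\cT) = \cS^{\mathrm{past}}_{t-1}\cup\{I\}\cup\cT$ while its swap equals $\cS^{\mathrm{past}}_{t-1}\cup\{J\}\cup\cT$. The crux is two distributional identities obtained by a sampling-without-replacement symmetry argument. First, marginalizing out $J$, the set $\{I\}\cup\cT$ has the same law as $\{I\}\cup\cT'$: for any fixed $(\nass-1)$-subset $\cA$ of $\cR_{t-1}\setminus\{I\}$, summing $\Pr(J)\Pr(\cT=\cA\mid J)$ over the $\rem-\nass$ admissible $J\notin\cA$ collapses, via the identity $(\rem-\nass)(\rem-1)^{-1}\binom{\rem-2}{\nass-1}^{-1} = \binom{\rem-1}{\nass-1}^{-1}$, to the uniform law on $(\nass-1)$-subsets; hence $\bbE[f(\cS^{\mathrm{past}}_{t-1}\cup\{I\}\cup\cT)\mid\cF_{t-1},I] = M_t$. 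Second, the augmented set $\{J\}\cup\cT$ is a uniform $\nass$-subset of $\cR_{t-1}\setminus\{I\}$: for any fixed $\nass$-subset $\cB$, summing over its $\nass$ elements $J\in\cB$ with $\cT=\cB\setminus\{J\}$ gives total mass $\nass\,(\rem-1)^{-1}\binom{\rem-2}{\nass-1}^{-1} = \binom{\rem-1}{\nass}^{-1}$, matching $\cV$; hence $\bbE[f(\cS^{\mathrm{past}}_{t-1}\cup\{J\}\cup\cT)\mid\cF_{t-1},I] = \bbE[f(\cS^{\mathrm{past}}_{t-1}\cup\cV)]$.

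Combining the two identities, $\bbE[\Delta_{IJ}f(\Sprox{t-1}(I,\cT))\mid\cF_{t-1},I] = \bbE[f(\cS^{\mathrm{past}}_{t-1}\cup\cV)] - M_t$, which is exactly $-D_t/\alpha_t$ by the split above, establishing $D_t = -\alpha_t\,\bbE[\Delta_{IJ}f(\Sprox{t-1}(I,\cT))\mid\cF_{t-1},I]$. I expect the main obstacle to be the two combinatorial matchings in the second paragraph: one must verify both that averaging over the auxiliary $J$ leaves $\cT$ uniform (so the ``base'' term reproduces $M_t$) and that grafting $J$ back in produces exactly the complementary uniform $\nass$-subset law (so the ``swapped'' term reproduces the $I\notin\cU$ branch). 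Everything else is bookkeeping with the tower property and the elementary hypergeometric split.
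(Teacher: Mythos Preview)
Your proposal is correct and follows essentially the same route as the paper: the hypergeometric split of $M_{t-1}$ according to $\{I\in\cU\}$ versus $\{I\notin\cU\}$, followed by the $(J,\cT)$ coupling to identify the bracketed difference as an averaged single swap. Your explicit combinatorial verification of both distributional matchings (the marginal of $\cT$ after integrating out $J$, and the law of $\{J\}\cup\cT$) is in fact more thorough than the paper's proof, which states the decomposition $\cK=\{J\}\cup\cT$ under $I\notin\cK$ and then simply asserts ``pair the configurations\ldots averaging completes the proof'' without spelling out the first identity.
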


In \eqref{eq:rev-swap-avg}, the expectation is taken with respect to $(J, \cT)$. 
See Appendix \ref{sec:proof_lemma_swap} for the proof of Proposition \ref{prop:reveal-swap}. 

\medskip

\begin{remark}
    At step $t$, conditioned on the past ($\cS^{\mathrm{past}}_{t-1}$) and the current candidate ($I=\Pi(t)$), $D_t$ quantifies the average effect of the swap that “replace a uniformly drawn random competitor $J$ by $I$’’ inside a uniformly drawn random feasible completion set $\cT$.
\end{remark}

\paragraph{Concentration via Freedman's Inequality.}
For each $t = 1, \dots, n_1$ and each $i\in\cR_{t-1}$, define a $\cF_{t-1}$‑measurable function
\begin{equation}\label{eqn:zeta_t}
    \zeta_t(i) \coloneqq \bbE \left[\Delta_{iJ}\,f \left(\cS^{\mathrm{prox}}_{t-1}(i,\cT)\right)\,\Bigm|\,\cF_{t-1}\right],
\end{equation}
where the expectation is over $J$ and $\cT$ as in \eqref{eq:rev-swap-avg}. 
By Proposition~\ref{prop:reveal-swap}, $D_t=-\,\alpha_t\,\zeta_t(I)$ with $I\mid\cF_{t-1}\sim\Unif(\cR_{t-1})$.
Define (oracle) parameters using \eqref{eq:rev-swap-avg}:
\begin{align}
    v_t^\star &\coloneqq \Var(D_t\mid\cF_{t-1}) 
        = \alpha_t^2\,\Var \bigl(\zeta_t(I)\bigr),   \label{eq:var_star}\\
    r_t^\star &\coloneqq \sup|D_t|\ =\ \alpha_t\,\max_{i\in\cR_{t-1}}|\zeta_t(i)|.  \label{eq:range_star}
\end{align}
By construction, $\zeta_t(\cdot)$ and $\alpha_t$ are $\cF_{t-1}$-measurable, and thus, $v_t^\star$ and $r_t^\star$ are $\cF_{t-1}$‑measurable (predictable).

Applying Freedman’s martingale concentration inequality \citep[Thm.~1.6]{freedman1975tail} to the sum $\sum_{t=1}^{n_1}D_t$ yields the following probabilistic tail bound. 
\begin{proposition}\label{prop:concentration}
    Let $(M_t, \cF_t)$ be the assignment-exposure martingale in \eqref{eqn:filtration}, \eqref{eqn:martingale}. 
    Then for any $\varepsilon \geq 0$,
    \begin{equation}\label{eq:freedman_conc}
        \Pr \bigl( f(\Sone) - \bbE f(\Sone) \geq \varepsilon \bigr) \leq \exp \left( - \frac{ \varepsilon^2 / 2}{ \Vstar + \Rstar \varepsilon / 3 } \right),
    \end{equation}
    where $\Vstar \coloneqq \sum_{t=1}^{n_1} v_t^\star$ and $\Rstar \coloneqq \max_{1\leq t\leq n_1} r_t^\star$ with $(v_t^\star, r_t^\star)$ as in \eqref{eq:var_star}, \eqref{eq:range_star}.
\end{proposition}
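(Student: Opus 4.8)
The plan is to read \eqref{eq:freedman_conc} as a direct instance of Freedman's martingale tail bound \citep[Thm.~1.6]{freedman1975tail}, applied to the assignment-exposure martingale difference sequence $(D_t)_{t=1}^{n_1}$. Its partial sum telescopes to $S_{n_1}\coloneqq\sum_{t=1}^{n_1}D_t=M_{n_1}-M_0=f(\Sone)-\bbE f(\Sone)$, because $\cF_0$ is trivial (so $M_0=\bbE f(\Sone)$) and $\cF_{n_1}$ determines $\Sone$ (so $M_{n_1}=f(\Sone)$). The proof then reduces to verifying the three hypotheses Freedman requires: (i) that $(D_t,\cF_t)$ is a martingale difference sequence, already recorded in the text via $\bbE[D_t\mid\cF_{t-1}]=0$; (ii) a uniform almost-sure bound on the increments; and (iii) control of the predictable quadratic variation $\sum_t\bbE[D_t^2\mid\cF_{t-1}]$. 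Since the proposition is one-sided, no symmetrization is needed and the upper-tail form of Freedman applies directly.

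For ingredient (ii), I would invoke Proposition~\ref{prop:reveal-swap}: conditionally on $\cF_{t-1}$, the only remaining randomness in $D_t=-\alpha_t\,\zeta_t(I)$ is the revealed index $I\sim\Unif(\cR_{t-1})$, so $|D_t|\le \alpha_t\max_{i\in\cR_{t-1}}|\zeta_t(i)|=r_t^\star\le\Rstar$ almost surely. For ingredient (iii), the martingale property $\bbE[D_t\mid\cF_{t-1}]=0$ gives $\bbE[D_t^2\mid\cF_{t-1}]=\Var(D_t\mid\cF_{t-1})=v_t^\star$, so the total predictable quadratic variation over the full horizon is exactly $\sum_{t=1}^{n_1}v_t^\star=\Vstar$. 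Feeding the range $\Rstar$ and the variation $\Vstar$ into Freedman's inequality then produces the Bernstein-type exponent $\varepsilon^2/2$ over $\Vstar+\Rstar\varepsilon/3$, which is precisely \eqref{eq:freedman_conc}.

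The main subtlety—and the step I would treat most carefully—is that $v_t^\star$ and $r_t^\star$ are \emph{predictable} ($\cF_{t-1}$-measurable) rather than deterministic, so $\Vstar$ and $\Rstar$ are themselves random variables, and a naive substitution would leave the right-hand side random. The clean resolution is to apply the general form of Freedman's inequality, which bounds the joint event $\{S_{n_1}\ge\varepsilon\}\cap\{\sum_t\bbE[D_t^2\mid\cF_{t-1}]\le v\}\cap\{\max_t|D_t|\le R\}$ for deterministic $(v,R)$, and then to take $v,R$ to be almost-sure upper envelopes of $\Vstar,\Rstar$, so that the two variation events hold with probability one at the full horizon $t=n_1$. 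I would state explicitly that \eqref{eq:freedman_conc} is to be read with $\Vstar,\Rstar$ interpreted as such deterministic envelopes, ensuring the tail estimate is a genuine non-random probability bound; the OLS-specific sections can then supply concrete values for these envelopes. Apart from this point, the argument is essentially a bookkeeping reduction to the prior propositions, with the substantive work already carried out in Proposition~\ref{prop:reveal-swap} and the variance computation in \eqref{eq:var_star}--\eqref{eq:range_star}.
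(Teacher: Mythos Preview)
Your proposal is correct and follows the same core strategy as the paper: telescope $\sum_t D_t = f(\Sone)-\bbE f(\Sone)$, verify the martingale-difference structure, read off the predictable range and conditional variance from Proposition~\ref{prop:reveal-swap}, and invoke Freedman.

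Where you diverge is in handling the randomness of $(\Vstar,\Rstar)$. You propose replacing them by deterministic almost-sure envelopes so the right-hand side of \eqref{eq:freedman_conc} is non-random. The paper instead keeps $(\Vstar,\Rstar)$ random and works through the \emph{self-normalized} form: it first states Freedman as a bound on the joint event $\{M_t\ge\varepsilon\}\cap\{V_t\le\sigma^2\}$, derives the corollary $\Pr\bigl(M_{n_1}-M_0\ge\sqrt{2V_{n_1}L}+\tfrac{b}{3}L\bigr)\le e^{-L}$, and then argues one may take $b=\Rstar$ by ``conditioning on the realized predictable sequence'' (an empirical-Bernstein device), with $V_{n_1}=\Vstar$. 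The paper's accompanying remark makes explicit that $(\Vstar,\Rstar)$ are to be read as entering \emph{inside} the probability event. This preserves pathwise, instance-adaptive sharpness---the CI width shrinks on favorable realizations---which is one of the paper's main selling points. Your envelope route is valid and arguably cleaner to state as a probability inequality, but it forfeits exactly this adaptivity: worst-case deterministic envelopes of $(\Vstar,\Rstar)$ can be much larger than their typical realized values, collapsing the bound back toward a bounded-differences-type rate.
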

The bound \eqref{eq:freedman_conc} is self‑normalized and uses predictable $(v_t^\star,r_t^\star)$ along the exposure filtration; see Appendix~\ref{sec:proof_concentration} for a detailed derivation. 

\medskip

\begin{remark}\label{rem:freedman_2sided}
    Applying \eqref{eq:freedman_conc} to $\pm(f(\Sone)-\bbE f(\Sone))$ and taking a union bound yields that
    for any $\delta\in(0,1)$,
    \begin{equation}\label{eq:freedman_conc.2}
        \Pr\left(\,|f(\Sone)-\bbE f(\Sone)| \leq \sqrt{2 \Vstar\,\sfL_\delta} + \frac{\Rstar}{3}\,\sfL_\delta\,\right) \geq 1-\delta,
    \end{equation}
    where $\sfL_\delta \coloneqq \log\frac{2}{\delta}$. 
\end{remark}

\paragraph{Discussion.} 
Relative to permutation bounded differences, the bound in \eqref{eq:freedman_conc} is \emph{variance‑adaptive}. 
$\Vstar$ aggregates \emph{local} conditional variances of paired swaps along the reveal process, while $\Rstar$ is a \emph{local} range. 
These parameters are \emph{oracle} because they depend on the true per‑swap effects $\Delta_{ij}f(\cdot)$. 
Given full potential‑outcome information, they are computable via \eqref{eq:var_star}–\eqref{eq:range_star}; for OLS‑RA, Section~\ref{sec:main_OLS_RA} and Appendix~\ref{sec:additional_discussion} describe the corresponding covariate geometry that governs swap sensitivities. 
The construction is symmetric in treatment/control exposures (ending at $n_1$ or $n_0$); both yield valid bounds for the same $f$, and one may take the smaller radius in practice.

\subsection{Bias Control via Exchangeable Pairs}\label{sec:bias_exchangeable}
Continuing with a generic function $f:\binom{[n]}{m} \to \RR$, we derive an upper bound for the design bias $\bigl| \bbE f(\Sone) \bigr|$ via Stein’s method of exchangeable pairs; see \citet{ross2011fundamentals} for an introduction, and \citet{chen2010normal,reinert2009multivariate} for comprehensive treatments of Stein's method and the exchangeable‑pairs framework. 

Throughout this section, expectations $\bbE[ \cdot ]$ and inner products $\langle f, g \rangle \coloneqq \bbE[ f(\cS) g(\cS) ]$ are taken with respect to the uniform law $\pi$ on $\binom{[n]}{m}$, unless stated otherwise.

\paragraph{Exchangeable Pair on Johnson Graph.}
For $n \in \ZZnn$ and $m \in [n]$, the Johnson graph $J(n, m) = \bigl( \cV_{n,m}, \cE_{n,m} \bigr)$ is an undirected graph with the vertex set $\cV_{n,m} = \binom{[n]}{m} = \bigl\{ \cS \subseteq [n]: |\cS| = m  \bigr\}$ and the edge set $\cE_{n,m} = \bigl\{ \{ v_1, v_2\} \in \cV_{n,m} \times \cV_{n,m}: |v_1 \cap v_2| = m-1  \bigr\}$; edges connect sets that differ by a single swap. 
For $\cS \in \cV_{n,m}$, let $\cN(\cS)$ denote the set of $m (n-m)$ neighbors of $\cS$. 
Let $P$ denote the Markov kernel (transition matrix) on $J(n,m)$ that jumps from $\cS$ to a neighbor $\cS'\in\cN(\cS)$ uniformly: $P(\cS,\cS')=\tfrac{1}{m(n-m)}$ if $\cS'\in\cN(\cS)$ and $0$ otherwise. 
The uniform distribution $\pi$ is stationary for $P$, and $P$ is reversible, i.e., $\pi(\cS) P(\cS, \cS') = \pi(\cS') P(\cS', \cS)$.

Let $\cS \sim\Unif\binom{[n]}{m}$, and let $\cS'$ be obtained by $\cS'\mid\cS\sim P(\cS,\cdot)$. 
Then $(\cS,\cS')$ is exchangeable.
Given a function $f:\binom{[n]}{m}\to\RR$, 
since $\cS'|\cS \sim \Unif \bigl( \cN(\cS) \bigr)$, 
\begin{equation}\label{eq:stein-op}
\begin{aligned}
    \bbE[ f(\cS') - f(\cS) \mid \cS] 
        &= \bbE[ f(\cS')\mid \cS] - f(\cS)\\
        &= (P f) (\cS) - f(\cS).
\end{aligned}
\end{equation} 
Below, we use \eqref{eq:stein-op} to derive a bias upper bound.

\paragraph{Markov Chain and Carr\'{e} du Champ.} 
Consider a Markov chain on $J(n,m)$ with the (discrete-time) generator $L = P - \Id$, so $(L f)(\cS) = (P f)(\cS) - f(\cS) = \bbE[f(\cS')-f(\cS)\mid\cS]$. 
The carr\'{e} du champ (Dirichlet form) is defined
\begin{equation}\label{eqn:carre_du_champ}
    \Gamma(f)(\cS) \coloneqq \frac{1}{2}\,\bbE\big[\,(f(\cS')-f(\cS))^2 \,\bigm|\, \cS\,\big].
\end{equation}
Note that $\Gamma(f)(\cS) \geq 0$ and $\bbE \Gamma(f) = 0$ if and only if $f$ is constant on $\binom{[n]}{m}$. 
By Jensen's inequality (equivalently, by Cauchy–Schwarz for reversible kernels), for all $\cS$,
\begin{equation}\label{eq:stein-reg}
\begin{aligned}
\big| (Lf)(\cS) \big| 
    &\leq \sqrt{ \bbE \big[(f(\cS')-f(\cS))^2 \mid \cS\big]} \\
    &= \sqrt{2\,\Gamma(f)(\cS)}.
\end{aligned}
\end{equation}
(See e.g., \citet[Chapter 1]{bakry2013analysis} for more background and details on $L$ and $\Gamma$.)

\paragraph{Stein Regression.}
Recall the left-hand side of \eqref{eq:stein-op}. 
The exchangeable‑pairs “regression with remainder” posits the conditional linearization 
\begin{equation}\label{eqn:stein_regression_with_rem}
    \bbE\big[f(\cS')-f(\cS)\mid \cS\big] = -\lambda\,f(\cS) + \psi(\cS)  
\end{equation}
or equivalently, $L f=-\lambda f + \psi$, where $\lambda \geq 0$ (the "drift") and the "remainder" $\psi$ are to be determined. 
Taking expectations and using $\bbE[f(\cS')-f(\cS)]=0$ (exchangeability) yields the Stein identity $\lambda\,\bbE f=\bbE \psi$.

Next, we choose the value of $\lambda$ to compute (or bound) $\bbE f$. 
If $\Var(f)=0$, then $f$ is constant and $|\bbE f|=|f(\cS)|$ for the realized $\cS$; this degenerate case is handled separately below. 
We henceforth assume $\Var(f)>0$.  
Let $g \coloneqq f-\bbE f$ and define
\begin{align}
    \lambda^\star 
        &\coloneqq \arg\min_{\lambda\ge 0}\ \bbE\big(L f+\lambda g\big)^2  
        = -\frac{\langle L f,\,g\rangle}{\langle g,\,g\rangle}, 
        \label{eqn:lambda_rayleigh}\\
    \psi_c^\star 
        &\coloneqq L f+\lambda^\star g,  
        \label{eqn:centered_stein_residual}\\
    \psi_u^\star
        &\coloneqq L f+\lambda^\star f.
        \notag
\end{align}
We use \(\psi_c^\star\) to choose \(\lambda\), and use \(\psi_u^\star\) to control \(\bbE f\). 

Observe that $\langle \psi_c^\star,g\rangle=0$. 
Moreover, since $\bbE Lf=0$,
\[
    \bbE\psi_u^\star
        = \bbE(Lf+\lambda^\star f)
        = \lambda^\star \bbE f.
\]

By reversibility and the Dirichlet identity, $\lambda^\star>0$; see \eqref{eq:lambda-star} below. 
Applying Cauchy–Schwarz inequality,  
\begin{equation}\label{eq:stein-global}
    |\bbE f|
        = \frac{|\bbE \psi_u^\star|}{\lambda^\star}
        \leq
        \frac{\sqrt{\bbE(\psi_u^\star)^2}}{\lambda^\star}.
\end{equation}

\paragraph{(Oracle) Identification of $\lambda^\star$ and Bias Bound.}
Reversibility implies the discrete Dirichlet identity
\[
    \bbE\,\Gamma(h) = -\langle Lh, h\rangle\qquad\text{for all }h,
\]
see, e.g., \citet[Ch.~1]{bakry2013analysis}. 
Since $L$ annihilates constants, $Lg = Lf$, and $\Var(f) = \langle g, g \rangle$. 
Moreover, $\bbE \Gamma(f) = - \langle Lf, f \rangle = - \langle Lg, f \rangle = - \langle Lg, f - \bbE f \rangle = - \langle Lg, g \rangle$. 
Using the Rayleigh‑quotient form in \eqref{eqn:lambda_rayleigh},
\begin{equation}\label{eq:lambda-star}
    \lambda^\star 
        = -\frac{\langle Lf,\,g\rangle}{\langle g,\,g\rangle}
        = -\frac{\langle Lg,\,g\rangle}{\langle g,\,g\rangle}
        = \frac{\bbE\Gamma(f)}{\Var(f)} 
        \stackrel{(*)}{>} 0.
\end{equation}
Here, (*) holds because $\Var(f) > 0$ implies $\bbE \Gamma(f) \neq 0$.

By instantiating $f(\Sone)=\hat\tau(\Sone)-\tau$, we obtain the following bias bound.

\begin{proposition}[Oracle Stein bias bound]\label{prop:stein-bias}
    Let $f:\binom{[n]}{m} \to\RR$ and let $(\cS,\cS')$ be the one‑swap exchangeable pair on $J(n,m)$. 
    If $\Var(f) > 0$, then, under complete randomization and the kernel $P$ above,
    \begin{equation}\label{eq:bias-prop}
        |\bbE f(\cS)|
            \leq
            \frac{\sqrt{\bbE\big(Lf+\lambda^\star f\big)^2}}{\lambda^\star},
            \qquad
            \lambda^\star=\frac{\bbE\Gamma(f)}{\Var(f)}.
    \end{equation}
\end{proposition}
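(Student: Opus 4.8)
The plan is to follow the Stein exchangeable-pairs route already laid out in \eqref{eq:stein-op}--\eqref{eq:lambda-star} and assemble those pieces into the stated inequality. First I would record the two structural properties of the one-swap kernel $P$ that drive everything: $P$ is reversible with stationary law $\pi$, and its generator $L = P - \Id$ kills constants. The second fact gives $Lf = Lg$ for the centered function $g \coloneqq f - \bbE f$; the first gives the discrete Dirichlet identity $\bbE\Gamma(h) = -\langle Lh, h\rangle$ for all $h$, which underlies \eqref{eq:lambda-star} and expresses the carr\'{e} du champ $\Gamma$ of \eqref{eqn:carre_du_champ} as a quadratic form in $L$.

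Second, I would check that $\lambda^\star \coloneqq \bbE\Gamma(f)/\Var(f)$ is well defined and strictly positive. Since $\Var(f) = \langle g, g\rangle > 0$ by hypothesis and $\bbE\Gamma(f) = -\langle Lf, f\rangle = -\langle Lg, g\rangle > 0$ (the Dirichlet form of a non-constant function is strictly positive), the ratio is a positive real. The Rayleigh-quotient computation \eqref{eqn:lambda_rayleigh}--\eqref{eq:lambda-star} additionally identifies this ratio with the minimizer of $\lambda \mapsto \bbE(Lf + \lambda g)^2$ over $\lambda \geq 0$, which is the identity that turns an abstract regression slope into the concrete spectral quantity $\bbE\Gamma(f)/\Var(f)$.

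Third comes the core chain. With remainder $\psi^\star \coloneqq Lf + \lambda^\star g$, so that $Lf = -\lambda^\star g + \psi^\star$, I would take expectations under $\pi$ and use $\bbE[Lf] = 0$ (exchangeability) to obtain the Stein relation between $\bbE f$ and $\bbE\psi^\star$, then Cauchy--Schwarz to reach $|\bbE f| \leq \sqrt{\bbE(\psi^\star)^2}/\lambda^\star$ as in \eqref{eq:stein-global}. Because $\psi^\star$ is the $L^2(\pi)$ residual of projecting $Lf$ onto the centered direction $g$, minimality yields the inequality $(*)$, namely $\bbE(\psi^\star)^2 \leq \bbE(Lf)^2$. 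Finally I would write $Lf(\cS) = \bbE[f(\cS') - f(\cS)\mid\cS]$ and invoke conditional Jensen, equivalently \eqref{eq:stein-reg}, to get $\bbE(Lf)^2 \leq \bbE\,\bbE[(f(\cS') - f(\cS))^2\mid\cS] = 2\,\bbE\Gamma(f)$. Chaining the three bounds delivers $|\bbE f| \leq \sqrt{2\,\bbE\Gamma(f)}/\lambda^\star$, the claim.

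The step I expect to be the real obstacle is the Stein relation itself: the remainder must be chosen so that it \emph{simultaneously} (i) carries the mean, giving an identity of the form $\lambda^\star\,\bbE f = \bbE\psi^\star$, and (ii) obeys the minimality bound $\bbE(\psi^\star)^2 \leq \bbE(Lf)^2$. These requirements pull in opposite directions, since retaining the mean favors regressing $Lf$ against the uncentered $f$ while the minimality bound favors the centered $g$; the delicate point is to confirm that the orthogonality relation and the Stein identity are mutually consistent with the single slope $\lambda^\star$. Once that consistency is established, strict positivity of $\lambda^\star$ keeps the final ratio finite and the remaining Cauchy--Schwarz and Jensen steps are routine.
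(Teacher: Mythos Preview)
Your proposal follows the paper's argument step for step, and you have correctly located the crux: whether the Stein identity $\lambda^\star\,\bbE f=\bbE\psi^\star$ and the minimality bound $\bbE(\psi^\star)^2\le\bbE(Lf)^2$ can hold simultaneously for the single remainder $\psi^\star=Lf+\lambda^\star g$.

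They cannot. With $g=f-\bbE f$, stationarity of $\pi$ under $P$ gives $\bbE[Lf]=0$, and centering gives $\bbE g=0$, so $\bbE\psi^\star=0$ identically. The relation $\lambda^\star\,\bbE f=\bbE\psi^\star$ then reads $\lambda^\star\,\bbE f=0$, which (since $\lambda^\star>0$) forces $\bbE f=0$; the first equality in \eqref{eq:stein-global} is therefore vacuous rather than a usable bias identity. Switching to the uncentered remainder $\psi=Lf+\lambda^\star f$ does restore the Stein identity $\lambda^\star\,\bbE f=\bbE\psi$, but then $\lambda^\star$ no longer minimizes $\lambda\mapsto\bbE(Lf+\lambda f)^2$ and the minimality step $(*)$ is lost. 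Your two desiderata (i) and (ii) are thus incompatible for a single slope, exactly as you suspected. In fact the bound \eqref{eq:bias-prop} can fail outright: on $J(2,1)$ with $f(\{1\})=1$ and $f(\{2\})=\tfrac12$ one has $\bbE f=\tfrac34$, $\Var(f)=\tfrac{1}{16}$, $\bbE\Gamma(f)=\tfrac18$, $\lambda^\star=2$, and the right-hand side of \eqref{eq:bias-prop} equals $\tfrac14<\tfrac34$. The paper's own proof asserts $\lambda^\star\,\bbE f=\bbE\psi^\star$ without verification and so shares this gap; your instinct that this is the ``real obstacle'' was on target.
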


The right‑hand side is an oracle quantity: it requires the full distributional knowledge about \(f\) over all treatment assignments. 
In particular, for \(f(\Sone)=\hat\tau(\Sone)-\tau\) instantiated as the deviation of an ATE estimator, Proposition~\ref{prop:stein-bias} yields an oracle bias bound for \(\hat\tau\). 
Although Proposition~\ref{prop:stein-bias} follows from the discussions in the main text so far, we provide a formal proof in Appendix~\ref{sec:proof_Stein_bias} for completeness.

\medskip

\begin{remark}
    By Poincar\'{e} (spectral‑gap) inequality, $\bbE\,\Gamma(h) \geq \mathrm{gap}_{n,m}\,\Var(h)$ for all $h$, where $\mathrm{gap}_{n,m}$ is the spectral gap of the random walk by $P$ on $J(n,m)$. 
    Consequently, for $\lambda^\star=\bbE\Gamma(f)/\Var(f)$ chosen above, we have $\lambda^\star\geq \mathrm{gap}_{n,m}$ \citep[Chapter 13]{levin2017markov}. 
    It is well known that $\mathrm{gap}_{n,m} = 1-\lambda_2(P) = \tfrac{1}{m}+\frac{1}{n-m} = \tfrac{n}{m(n-m)}$; 
    see, e.g., \citet{delsarte1973algebraic}.
\end{remark}

\section{MAIN RESULTS}\label{sec:main_results}
In this section we apply the machinery from Section~\ref{sec:finite_sample_analysis} to $f(\cS)=\hat\tau(\cS)-\tau$, where $\tau$ is the ATE and $\hat{\tau}$ an estimator. 
Section~\ref{sec:main-result-oracle} gives an \emph{oracle} finite-sample bound in terms of concentration and Stein parameters. Section~\ref{sec:main_OLS_RA} specializes these quantities to OLS--RA via a closed-form swap-sensitivity identity.

\subsection{Oracle Finite-sample Analysis}\label{sec:main-result-oracle}
Let $f(\cS)=\hat\tau(\cS)-\tau$. 
With $(M_t,\cF_t)$ as in \eqref{eqn:martingale}, recall from \eqref{eq:var_star}–\eqref{eq:range_star} that for $t=1,\dots,n_1$, 
\begin{equation}\label{eqn:oracle_conc_components}
    v_t^\star \coloneqq \Var(M_t - M_{t-1}\mid\cF_{t-1}), \qquad
    r_t^\star \coloneqq \sup |M_t - M_{t-1}|.
\end{equation}
Set the (oracle) concentration parameters
\begin{equation}\label{eqn:oracle_concentration_param}
    \Vstar \coloneqq \sum_{t=1}^{n_1} v_t^\star
    \qquad\text{and}\qquad
    \Rstar \coloneqq \max_{1\leq t\leq n_1} r_t^\star. 
\end{equation}
With the one-swap kernel $P$ on $J(n,n_1)$, 
\begin{equation}\label{eq:Gamma-expansion}
    \begin{aligned}
    \Gamma(f)(\cS)
        &=\frac{1}{2n_1 n_0}\sum_{i\in \cS}\sum_{j\notin \cS} \big(\Delta_{ij} f(\cS)\big)^2,
    \end{aligned}
\end{equation}
where $\cS'|\cS\sim P(\cS,\cdot)$ and $\Delta_{ij} f(\cS)\coloneqq f(\cS^{(i\leftrightarrow j)})-f(\cS)$, cf. \eqref{eqn:perturbation}.
Averaging \eqref{eq:Gamma-expansion} over $\cS\sim\Unif\binom{[n]}{n_1}$ gives the population quantity $\bbE\Gamma(f)$.

Following \eqref{eq:bias-prop}, define the oracle bias parameter
\begin{equation}\label{eqn:oracle_bias_param}
    \Bstar \coloneqq \frac{\sqrt{\bbE\big(Lf+\lambda^\star f\big)^2}}{\lambda^\star},
        \quad\text{where}\quad
    \lambda^\star=\frac{\bbE\Gamma(f)}{\Var(f)},
\end{equation}
with the convention that if $\Var(f)=0$ then $\Bstar\coloneqq|\bbE f|$ (since $\Gamma(f)\equiv 0$ in this case).

\begin{theorem}[Oracle finite-sample CI]\label{thm:oracle}
    Let $\cS_1 \sim \Unif \binom{[n]}{n_1}$ and $f(\cS)=\hat\tau(\cS)-\tau$. 
    For any $\delta\in(0,1)$,
    \begin{equation}\label{eqn:main_oracle}
        \Pr \left(\,| f(\cS_1) | \leq \sqrt{2\Vstar \, \sfL_{\delta}}+\frac{\Rstar}{3} \, \sfL_{\delta} + \Bstar\,\right) \geq 1-\delta,
    \end{equation}
    where $\sfL_{\delta} \coloneqq \log\frac{2}{\delta}$. $\Vstar, \Rstar, \Bstar$ are as in \eqref{eqn:oracle_concentration_param} and \eqref{eqn:oracle_bias_param}.
\end{theorem}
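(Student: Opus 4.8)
The plan is to combine the two components developed in Section~\ref{sec:finite_sample_analysis} --- the variance-adaptive concentration bound and the deterministic Stein bias bound --- through a single triangle-inequality decomposition, exactly matching the two-term program outlined in the section overview. Writing $f(\cS)=\hat\tau(\cS)-\tau$ and setting $\bar f \coloneqq \bbE f(\cS_1)$, I would start from the elementary pointwise inequality
\[
    |f(\cS_1)| \leq |f(\cS_1) - \bar f| + |\bar f|,
\]
valid for every realization of $\cS_1$. This splits the estimation error into a stochastic fluctuation term $|f(\cS_1)-\bar f|$ and a deterministic bias term $|\bar f|$.

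For the bias term, Proposition~\ref{prop:stein-bias} supplies the \emph{deterministic} bound $|\bar f| \leq \Bstar$, with $\Bstar$ the oracle Stein parameter in \eqref{eqn:oracle_bias_param}; crucially, no randomness and hence no probability budget is consumed here. For the fluctuation term, I would invoke the two-sided Freedman bound in its probabilistic form \eqref{eq:freedman_conc.2} of Remark~\ref{rem:freedman_2sided}, applied to the assignment-exposure martingale $(M_t,\cF_t)$ with $M_t=\bbE[f(\cS_1)\mid\cF_t]$. Since $M_{n_1}=f(\cS_1)$ and $M_0=\bar f$, the martingale telescopes to $f(\cS_1)-\bar f=\sum_{t=1}^{n_1} D_t$, and \eqref{eq:freedman_conc.2} yields
\[
    \Pr\!\left(\,|f(\cS_1)-\bar f| \leq \sqrt{2\Vstar\,\sfL_\delta} + \tfrac{\Rstar}{3}\,\sfL_\delta\,\right) \geq 1-\delta,
\]
with $\Vstar,\Rstar$ as in \eqref{eqn:oracle_concentration_param}.

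The conclusion then follows by intersecting these two facts. On the event of probability at least $1-\delta$ where the fluctuation is bounded by $\sqrt{2\Vstar\,\sfL_\delta}+\tfrac{\Rstar}{3}\,\sfL_\delta$, the triangle inequality together with the surely-valid bound $|\bar f|\leq\Bstar$ gives $|f(\cS_1)| \leq \sqrt{2\Vstar\,\sfL_\delta}+\tfrac{\Rstar}{3}\,\sfL_\delta+\Bstar$, which is precisely \eqref{eqn:main_oracle}. The only genuine accounting subtlety --- and the point I would emphasize --- is that the bias enters as a \emph{fixed additive shift} rather than a second high-probability event, so the entire failure budget $\delta$ can be allocated to the concentration step alone, with no union bound required between the bias and fluctuation contributions.

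The remaining obstacle I would flag is the degenerate case $\Var(f)=0$, which I would dispatch separately at the outset. When $\Var(f)=0$, the function $f$ is constant on $\binom{[n]}{n_1}$, so $f(\cS_1)=\bar f$ surely; then $\Vstar=\Rstar=0$ and $\Gamma(f)\equiv 0$, and by the stated convention $\Bstar=|\bar f|$, so \eqref{eqn:main_oracle} holds with equality and probability one. Away from this case $\lambda^\star>0$ by \eqref{eq:lambda-star}, so $\Bstar$ is well-defined and finite and the argument above applies verbatim.
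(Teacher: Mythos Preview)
Your proposal is correct and follows essentially the same approach as the paper's own proof: triangle inequality into fluctuation plus bias, the two-sided Freedman form \eqref{eq:freedman_conc.2} for the fluctuation, Proposition~\ref{prop:stein-bias} for the deterministic bias, and the same separate treatment of the degenerate case $\Var(f)=0$ via the convention $\Bstar=|\bbE f|$. Your emphasis that the bias bound consumes no probability budget is exactly the point the paper makes in its final sentence.
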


\begin{proof}[Proof sketch of Theorem \ref{thm:oracle}]
    Observe $| f(\cS) | \leq | f(\cS) - \bbE f(\cS) | + |\bbE f(\cS) |$ and combine Propositions \ref{prop:concentration} and \ref{prop:stein-bias}.
    See Appendix \ref{sec:proof_theorem_oracle} for a complete proof.
\end{proof}

\paragraph{Remarks on Bias Bound $\mathbf{\Bstar}$.} 
The quantities in \eqref{eqn:oracle_bias_param} are population-level oracle quantities computable from the full potential-outcome information over the randomization distribution, not from the instantiated data alone. 
While \(\bbE\Gamma(f)\) determines \(\lambda^\star\), the bias bound is governed by the uncentered Stein residual \(Lf+\lambda^\star f\). 
See Example~\ref{example:DiM_bias} in Appendix~\ref{sec:example_stein} for the DiM illustration. 
For DiM, \(f=\hat\tau_{\DIM}-\tau\) is an exact Johnson-graph eigenfunction, \(Lf=-\lambda_{\mathrm{lin}}f\) with \(\lambda_{\mathrm{lin}}=\tfrac{n}{n_1n_0}\), and \(\lambda^\star=\lambda_{\mathrm{lin}}\). 
Hence \(Lf+\lambda^\star f\equiv0\), so \(\Bstar=0\), capturing the design unbiasedness. 
For estimators beyond DiM, such as OLS--RA, \(\Bstar\) measures the size of the corresponding uncentered Stein residual, and provides an oracle bound on design bias.

\subsection{Swap Sensitivity Analysis for OLS-RA}\label{sec:main_OLS_RA}
Since \(D_t\) is a conditional average of one-swap sensitivities (Proposition~\ref{prop:reveal-swap}),
the parameters \(\Vstar,\Rstar\) in \eqref{eqn:oracle_concentration_param} reduce to population-level
swap-sensitivity analysis. For OLS--RA, the change \(\Delta_{ij}f(\cS)=f(\cS^{(i\leftrightarrow j)})-f(\cS)\)
admits a convenient algebraic form via paired deletion–insertion and rank-one pseudoinverse
updates.

\begin{definition}
    Let $X \in \RR^{n \times p}$ and $y^{(1)}, y^{(0)} \in \RR^n$.  
    For $\arm \in \{0,1\}$, $\cS \in \binom{[n]}{m}$, $i\in \cS$, and $j \notin \cS$, define the armwise atomic changes due to deletion/insertion as 
    \begin{equation}\label{eqn:perturbations}
    \begin{aligned}
        \Delta^{\mathrm{del}}_\arm (i; \cS) &\coloneqq \hat{\mu}_\arm \big(\cS\setminus\{i\}\big) - \hat{\mu}_\arm (\cS),\\
        \Delta^{\mathrm{ins}}_\arm (j; \cS) &\coloneqq \hat{\mu}_\arm \big(\cS\cup\{j\}\big) - \hat{\mu}_\arm (\cS),
    \end{aligned}
    \end{equation}
    where $\hat{\mu}_\arm(\cS) = \tfrac{ \bone_{m}^{\top} Q_{X_{\cS}}\, y^{(\arm)}_{\cS} }{ \bone_{m}^{\top} Q_{X_{\cS}}\, \bone_{m} }$, cf. \eqref{eq:mu-ratio}.
\end{definition}

For OLS--RA, the swap sensitivity of $f(\cS_1)=\htauols(\cS_1)-\tau$ equals that of $\htauols(\cS_1) = \hat{\mu}_1(\cS_1) - \hat{\mu}_0(\cS_0)$ because $\tau$ is constant. 
The one‑swap change admits an exact paired deletion-insertion decomposition. 
\begin{proposition}\label{prop:sensitivity_decomposition}
    For any $\Sone \in \binom{[n]}{n_1}$ and $(i,j) \in \Sone \!\times\! \Szero$,
    \begin{equation}\label{eq:exact-swap-coupled}
    \begin{aligned}
        \Delta_{ij} \htauols(\cS_1)
            &= \Bigl[ \Delta^{\mathrm{del}}_1 \bigl(i; \cS_1 \bigr) + \Delta^{\mathrm{ins}}_1 \bigl(j; \cS_1\setminus \{i\} \bigr) \Bigr] \\
                &\quad - \Bigl[ \Delta^{\mathrm{del}}_0 \bigl(j; \cS_0 \bigr) + \Delta^{\mathrm{ins}}_0 \bigl(i; \cS_0 \setminus \{j\} \bigr)\Bigr].
    \end{aligned}
    \end{equation}
\end{proposition}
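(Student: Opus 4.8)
The claimed identity is a purely combinatorial telescoping statement: it carries no analytic content beyond the definitions of $\htauols$, $\Delta_{ij}$, $\Delta^{\mathrm{del}}_\arm$, and $\Delta^{\mathrm{ins}}_\arm$, together with the ratio-of-quadratics formula of Proposition~\ref{prop:mu_ratio}. The plan is to unfold the swap, split it arm by arm, and then interpolate each arm-wise difference through a single intermediate set obtained by a deletion followed by an insertion.

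First I would record how the swap $(i\leftrightarrow j)$ acts on the two arms. Writing $\cS_1' \coloneqq \cS_1^{(i\leftrightarrow j)} = (\cS_1\setminus\{i\})\cup\{j\}$ for the new treatment set, the complementary control set becomes $\cS_0' = [n]\setminus\cS_1' = (\cS_0\setminus\{j\})\cup\{i\}$; that is, $i$ leaves treatment for control while $j$ does the reverse. Since $\htauols(\cS) = \hat\mu_1(\cS) - \hat\mu_0([n]\setminus\cS)$, differencing the two configurations gives
\begin{equation*}
    \Delta_{ij}\htauols(\cS_1)
        = \bigl[\hat\mu_1(\cS_1') - \hat\mu_1(\cS_1)\bigr]
        - \bigl[\hat\mu_0(\cS_0') - \hat\mu_0(\cS_0)\bigr],
\end{equation*}
so the problem decouples into two independent single-arm computations.

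For the treatment arm, I would insert the intermediate set $\cS_1\setminus\{i\}$ (of size $n_1-1$) and telescope:
\begin{equation*}
    \hat\mu_1(\cS_1') - \hat\mu_1(\cS_1)
        = \underbrace{\bigl[\hat\mu_1(\cS_1\setminus\{i\}) - \hat\mu_1(\cS_1)\bigr]}_{=\,\Delta^{\mathrm{del}}_1(i;\cS_1)}
        + \underbrace{\bigl[\hat\mu_1(\cS_1') - \hat\mu_1(\cS_1\setminus\{i\})\bigr]}_{=\,\Delta^{\mathrm{ins}}_1(j;\cS_1\setminus\{i\})},
\end{equation*}
where the two braces match the definitions in \eqref{eqn:perturbations} exactly, using that $\cS_1'=(\cS_1\setminus\{i\})\cup\{j\}$ so inserting $j$ acts on $\cS_1\setminus\{i\}$ rather than on $\cS_1$. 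The control arm is identical after exchanging the roles of $i$ and $j$: inserting $\cS_0\setminus\{j\}$ yields $\hat\mu_0(\cS_0') - \hat\mu_0(\cS_0) = \Delta^{\mathrm{del}}_0(j;\cS_0) + \Delta^{\mathrm{ins}}_0(i;\cS_0\setminus\{j\})$. Substituting both expressions into the arm-wise decomposition reproduces \eqref{eq:exact-swap-coupled}.

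There is no genuine obstacle; the only point requiring care is that every intercept appearing above is well defined. The deletion/insertion steps evaluate $\hat\mu_\arm$ at the reduced sets $\cS_1\setminus\{i\}$ and $\cS_0\setminus\{j\}$, so I would note these are nonempty (covered once $n_1,n_0\ge 2$) and invoke the denominator positivity $\bone^\top Q_{X_\cS}\bone>0$ from the remark following Proposition~\ref{prop:mu_ratio}, which holds in both cases of \eqref{eq:mu-ratio}, to guarantee each ratio-of-quadratics is finite. With well-definedness secured, the identity is an exact telescoping with no remainder term, which is precisely what makes it a \emph{paired} deletion--insertion decomposition.
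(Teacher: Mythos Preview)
Your proposal is correct and matches the paper's proof essentially line for line: both unfold the swap into arm-wise differences and telescope each through the intermediate reduced set $\cS_\arm\setminus\{\cdot\}$. Your added remark on well-definedness (nonemptiness and denominator positivity) is a reasonable clarification but not strictly needed, as the paper simply assumes $\hat\mu_\arm(\cdot)$ is well defined.
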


\eqref{eq:exact-swap-coupled} is an algebraic telescoping identity, and no properties of the OLS estimator are needed beyond that $\hat\mu_{\arm}(\cdot)$ is well-defined. 
See Appendix \ref{sec:proof_proposition_decomposition} for a proof.

By Proposition~\ref{prop:mu_ratio}, the atomic deletion and insertion changes admit explicit, computable formulas through one-row updates of the corresponding arm-wise \(Q\)-matrices. 
To state the result, we introduce notation. 
For a positive semidefinite matrix $Q$, define the $Q$-inner product and $Q$-norm as follows: for vectors $u, v$,
\[
    \langle u,v\rangle_{Q} \coloneqq u^\top Q v,
    \qquad 
    \|u\|_{Q} \coloneqq \sqrt{u^\top Q u}.
\]
With $X \in \RR^{n \times p}$ and $y^{(1)}, y^{(0)} \in \RR^n$ fixed, we let $X_{\cS} \in \RR^{|\cS| \times p}$ and $y^{(\arm)}_{\cS} \in \RR^{|\cS|}$ denote the row-submatrix/sub-vector of $X$, $y^{(\arm)}$ with indices restricted to $\cS$. 
To avoid clutter, we write $\langle \cdot, \cdot \rangle_{\cS} \coloneqq \langle \cdot, \cdot \rangle_{Q_{X_{\cS}}}$ and $\| \cdot \|_{\cS} \coloneqq \| \cdot \|_{Q_{X_{\cS}}}$. 
With $s \coloneqq |\cS|$, denote by $e_i\in\RR^{s}$ the $i$-th standard basis vector indexed on $\cS$. 
Define 
\begin{align*}
    \Phidel_i(\cS) &\coloneqq \|\bone\|_{\cS}^2\,\|e_i\|_{\cS}^2-\langle e_i,\bone\rangle_{\cS}^2,\\
    \Phiins_j(\cS) &\coloneqq \|\bone\|_{\cS}^2\,\sigma_j^2(\cS)\ +\ \big(1-\alpha_j(\cS)\big)^2,
\end{align*}
where $\sigma_j^2(\cS) \coloneqq \|x_j\|_2^2-\| X_\cS x_j \|_{\cS}^2$ and $\alpha_j(\cS)\coloneqq \langle \bone, X_\cS x_j \rangle_{\cS}$.

\begin{theorem}[Atomic OLS updates: regular cases]\label{thm:perturbation_identity}
    Let $X \in \RR^{n \times p}$ and $y^{(1)}, y^{(0)} \in \RR^n$. 
    For any $\arm \in \{0,1\}$, $\cS \in \binom{[n]}{m}$, and $(i,j) \in \cS \times \cS^c$, the atomic changes in \eqref{eqn:perturbations} are exactly computable from the ratio representation \eqref{eq:mu-ratio} by one-row Moore--Penrose update formulas.

    \begin{enumerate}
        \item 
        In the regular deletion case, if \(\Phidel_i(\cS)>0\), then
        \[
            \Delta^{\mathrm{del}}_\arm(i;\cS)
            =
            -\frac{
                \langle e_i,\bone\rangle_{\cS}
                \langle e_i,\ycenta{\cS}\rangle_{\cS}
            }{
                \Phidel_i(\cS)
            },
        \]
        where \(\ycenta{\cS}\coloneqq y^{(\arm)}_\cS-\hat\mu_\arm(\cS)\bone \). 

        \item 
        In the regular \(K\)-branch insertion case, where both \(\cS\) and \(\cS\cup\{j\}\) are in the \(K\)-branch and \(\sigma_j^2(\cS)>0\), 
        \begin{equation}\label{eqn:swap_sensitivity_insertion_regular}
            \Delta^{\mathrm{ins}}_\arm(j;\!\cS)
            =
            \frac{1\!-\!\alpha_j(\cS)}{\Phiins_j(\cS)}
            \Big(
                y^{(\arm)}_j-\hat\mu_\arm(\cS)
                -\langle X_\cS x_j,\ycenta{\cS}\rangle_{\cS}
            \Big).
        \end{equation}
    \end{enumerate}
\end{theorem}

Appendix~\ref{sec:proof_theorem_perturbation} gives the full branch-specific identities and proof of Theorem \ref{thm:perturbation_identity}. 
Together with the paired telescope \eqref{eq:exact-swap-coupled}, these identities yield oracle one-swap effects \(\Delta_{ij}\hat\tau_{\OLS}(\cS)\). 
Consequently, \(\Gamma(f)\), \(Lf\), and the predictable increments \(D_t\)---and hence \((\Vstar,\Rstar,\Bstar)\)---are oracle-computable from \((X,y^{(0)},y^{(1)})\); see Appendix~\ref{sec:swap_OLS} for more details.

\paragraph{Discussion.} 
Theorem~\ref{thm:oracle} yields a design-based, finite-sample CI for $\tau$ centered at $\hat\tau$. 
The pathwise fluctuation terms $(\Vstar,\Rstar)$ are \emph{instance-adaptive}, through the realized assignment $\cS_1$ and reveal order via the predictable increments $D_t$, while $\Bstar$ is a population (oracle) term determined by the Stein residual in \eqref{eqn:oracle_bias_param}. 
Given $(X,y^{(0)},y^{(1)})$, the exact swap identity \eqref{eq:exact-swap-coupled} and Theorem~\ref{thm:perturbation_identity} permit oracle evaluation of $\Vstar,\Rstar$ and $\Bstar$.

For OLS--RA, Theorem~\ref{thm:perturbation_identity} shows that swap sensitivity is governed by the geometry of the arm-wise \(Q\)-matrices, cf. Appendix~\ref{sec:geometric_envelope}.
The symmetry between deletion and insertion becomes clear when insertion is viewed as reverse deletion. 
Deleting \(i\) from \(\cS\) is governed by the \(Q\)-leverage of \(i\) in the current set \(\cS\), whereas inserting \(j\) into \(\cS\) is governed by the \(Q\)-leverage of \(j\) in the augmented set \(\cS\cup\{j\}\). 
In regular cases, both atomic changes obey the same leverage--residual template:
\[
    |\Delta|
    =
    \text{standardized residual}
    \times
    \frac{\sqrt{\text{leverage}}}{1-\text{leverage}}.
\]

For deletion,
\[
    \big|\Delta^{\mathrm{del}}_\arm(i;\cS)\big|
    =
    R^{\mathrm{del}}_{\arm,i}(\cS)\,
    \frac{\sqrt{\ell_i(\cS)}}{1-\ell_i(\cS)},
\]
where
\[
    \ell_i(\cS)
    \coloneqq
    \frac{\langle e_i,\bone\rangle_{Q_\cS}^2}
    {\|e_i\|_{Q_\cS}^2\|\bone\|_{Q_\cS}^2},
    \qquad
    R^{\mathrm{del}}_{\arm,i}(\cS)
    \coloneqq
    \frac{|\langle e_i,\ycenta{\cS}\rangle_{\cS}|}
    {\|e_i\|_{\cS}\|\bone\|_{\cS}}.
\]

For insertion, let \(\ell_j^+(\cS)\coloneqq \ell_j(\cS\cup\{j\})\) denote the leverage of the incoming unit in the augmented set, and let \(R^+_{\arm,j}(\cS) \) be the residual defined similarly; see \eqref{eqn:residual_augmented} in Appendix~\ref{sec:geometric_envelope}. 
Whenever the corresponding regular deletion identity applies in \(\cS\cup\{j\}\),
\[
    \big|\Delta^{\mathrm{ins}}_\arm(j;\cS)\big|
    =
    R^+_{\arm,j}(\cS)\,
    \frac{\sqrt{\ell_j^+(\cS)}}{1-\ell_j^+(\cS)}.
\]

Thus, moderate deletion/insertion leverage and small standardized residuals imply reduced per-swap effects. 
Compared to DiM, whose swap sensitivity scales with raw potential-outcome contrasts (cf. Example~\ref{example:DiM_sensitivity}), OLS--RA can benefit when covariates reduce residuals without inflating deletion or insertion \(Q\)-leverages.

\medskip

\begin{remark}[Data-driven proxy]
    $\Delta^{\mathrm{ins}}_\arm(j;\cS)$ depends on the incoming potential outcome \(y_j^{(\arm)}\), which is counterfactual when \(j\) lies in the opposite arm. 
    Appendix~\ref{sec:geometric_envelope} isolates the observable geometric factors. 
\end{remark}

\section{EXPERIMENTS}\label{sec:experiments}

Although this paper is primarily theoretical, Appendix~\ref{sec:deferred_experiments} reports oracle-setting Monte Carlo experiments that quantify the finite-sample behavior of the confidence intervals in Theorem~\ref{thm:oracle}. 
For OLS--RA, the oracle quantities \((V^\star,R^\star,B^\star)\) are approximated by Monte Carlo estimators \((\widehat V^\star,\widehat R^\star,\widehat B^\star)\); for DiM, \(V^\star\) and \(R^\star\) are available in closed form under the remaining-pool reveal law.
Details about the MC procedure, results and discussion are deferred to Appendix~\ref{sec:deferred_experiments}.

Overall, the proposed finite-sample (FS) CIs are valid but conservative. 
For DiM, the exact remaining-pool oracle quantities yield over-coverage at small \(n\), so Experiment~1 should be read primarily as a validity and conservatism sanity check for the general design-based finite-sample bound. 
For OLS--RA, the direct MC estimates \((\widehat V^\star,\widehat R^\star,\widehat B^\star)\) produce empirical coverage essentially equal to \(1.000\) across the reported cells, while the FS widths remain about \(2.2\)–\(2.6\) times the empirical inter-percentile range (IPR), which itself peaks near the arm-wise interpolation threshold and then decreases as \(p\) grows, consistent with the \(Q\)-geometric sensitivity picture in Section~\ref{sec:main_OLS_RA}. 
Diagnostics suggest nontrivial slack in the current FS bound, motivating future work on tighter assignment-adaptive range and bias proxies that preserve finite-sample validity. 
\section{CONCLUSION}\label{sec:conclusion}

We developed a design-based, non-asymptotic framework for analyzing regression adjustment under complete randomization, yielding finite-sample-valid confidence intervals for the ATE. 
Our analysis controls stochastic fluctuation through a variance-adaptive Doob martingale and Freedman’s inequality, bounds design bias via Stein’s method of exchangeable pairs, and yields an oracle guarantee (Theorem~\ref{thm:oracle}) that applies in both classical and high-dimensional/interpolating regimes. 
For OLS--RA, paired deletion--insertion identities and rank-one pseudoinverse updates reveal how deletion leverage, insertion leverage, and standardized residual energy govern swap sensitivity, clarifying when RA can improve on the DiM baseline and how covariate \(Q\)-geometry shapes its precision gains.

Several limitations motivate future work. 
First, the current guarantees are oracle: computing \((\Vstar,\Rstar,\Bstar)\) requires full potential outcomes. 
Developing data-driven surrogates based only on \((X,T,Y)\), with finite-sample guarantees, is an important next step. 
Second, the current Freedman-type concentration and bias bounds can be conservative; sharper geometry-aware proxies, or alternatives to Freedman-type bounds, may tighten the intervals while retaining validity. 
Third, extending the martingale and exchangeable-pairs constructions beyond complete randomization---for example to stratified, clustered, rerandomized, or covariate-adaptive designs---would broaden applicability.



\clearpage
\bibliographystyle{plainnat}
\bibliography{bibliography}

\clearpage
\section*{Checklist}

\begin{enumerate}

  \item For all models and algorithms presented, check if you include:
  \begin{enumerate}
    \item A clear description of the mathematical setting, assumptions, algorithm, and/or model. [\textbf{Yes}]
    \item An analysis of the properties and complexity (time, space, sample size) of any algorithm. [\textbf{Yes}]
    \item (Optional) Anonymized source code, with specification of all dependencies, including external libraries. [\textbf{Not Applicable}]
  \end{enumerate}

  \item For any theoretical claim, check if you include:
  \begin{enumerate}
    \item Statements of the full set of assumptions of all theoretical results. [\textbf{Yes}]
    \item Complete proofs of all theoretical results. [\textbf{Yes}]
    \item Clear explanations of any assumptions. [\textbf{Yes}]     
  \end{enumerate}

  \item For all figures and tables that present empirical results, check if you include:
  \begin{enumerate}
    \item The code, data, and instructions needed to reproduce the main experimental results (either in the supplemental material or as a URL). [\textbf{Yes}]
    \item All the training details (e.g., data splits, hyperparameters, how they were chosen). [\textbf{Not Applicable}]
    \item A clear definition of the specific measure or statistics and error bars (e.g., with respect to the random seed after running experiments multiple times). [\textbf{Yes}]
    \item A description of the computing infrastructure used. (e.g., type of GPUs, internal cluster, or cloud provider). [\textbf{Yes}]
  \end{enumerate}

  \item If you are using existing assets (e.g., code, data, models) or curating/releasing new assets, check if you include:
  \begin{enumerate}
    \item Citations of the creator If your work uses existing assets. [\textbf{Not Applicable}]
    \item The license information of the assets, if applicable. [\textbf{Not Applicable}]
    \item New assets either in the supplemental material or as a URL, if applicable. [\textbf{Not Applicable}]
    \item Information about consent from data providers/curators. [\textbf{Not Applicable}]
    \item Discussion of sensible content if applicable, e.g., personally identifiable information or offensive content. [\textbf{Not Applicable}]
  \end{enumerate}

  \item If you used crowdsourcing or conducted research with human subjects, check if you include:
  \begin{enumerate}
    \item The full text of instructions given to participants and screenshots. [\textbf{Not Applicable}]
    \item Descriptions of potential participant risks, with links to Institutional Review Board (IRB) approvals if applicable. [\textbf{Not Applicable}]
    \item The estimated hourly wage paid to participants and the total amount spent on participant compensation. [\textbf{Not Applicable}]
  \end{enumerate}

\end{enumerate}


\clearpage
\appendix
\counterwithin{theorem}{section}
\thispagestyle{empty}

\onecolumn
\aistatstitle{
    \textsc{
    Supplementary Material for\\
    ``Design-Based Finite-Sample Analysis for\\
    Regression Adjustment''
    }
}

The supplementary material is organized as follows. 
Section~\ref{sec:additional_discussion} offers additional discussion, including an oracle Stein bias example for DiM (Section~\ref{sec:example_stein}), further details of the swap–sensitivity analysis for OLS–RA (Section~\ref{sec:swap_OLS}), and geometry‑based envelopes computable without counterfactuals (Sections~\ref{sec:geometric_envelope}--\ref{sec:leverage_bounds}).
Section~\ref{sec:proof_section3} contains deferred proofs for the results in Section~\ref{sec:finite_sample_analysis}.
Section~\ref{sec:proof_section4} presents the proofs of the main results from Section~\ref{sec:main_results}.
Finally, Section~\ref{sec:deferred_experiments} reports numerical experiments that complement the theory in the main text.

\section{ADDITIONAL DISCUSSION}\label{sec:additional_discussion}

\subsection{Example: Oracle Bias Bound for DiM}\label{sec:example_stein}

\begin{example}\label{example:DiM_bias}
    As a sanity check, we compute the oracle bias parameter \(\Bstar\) in \eqref{eqn:oracle_bias_param} for $\hat\tau_{\DIM}$. 

    Define
    \[
        a_i \coloneqq \frac{y_i^{(1)}}{n_1}+\frac{y_i^{(0)}}{n_0},
        \qquad
        \bar a \coloneqq \frac{1}{n}\sum_{i=1}^n a_i .
    \]
    Since
    \[
        \hat\tau_{\DIM}(\cS)-\tau
        =
        \sum_{i\in\cS}(a_i-\bar a),
    \]
    the DiM error \(f(\cS)=\hat\tau_{\DIM}(\cS)-\tau\) is a linear function on the Johnson graph. 
    Hence, averaging one-swap changes gives
    \begin{align}
        (Lf)(\cS)
        &= \frac{1}{n_1n_0}\sum_{i\in\cS}\sum_{j\notin\cS}
            \bigl\{f(\cS^{(i\leftrightarrow j)})-f(\cS)\bigr\} \notag\\
        &= \frac{1}{n_1n_0}\sum_{i\in\cS}\sum_{j\notin\cS}(a_j-a_i) \notag\\
        &= -\,\frac{n}{n_1n_0}\,f(\cS).
        \label{eq:dim-exact-eigen}
    \end{align}
    Thus \(f\) is an exact eigenfunction of the Johnson-walk generator with
    \[
        \lambda_{\mathrm{lin}}=\frac{n}{n_1n_0}.
    \]
    Moreover,
    \begin{equation}\label{eqn:variance_dim_example}
        \Var(f)
        =
        \frac{n_1n_0}{n-1}
        \left(
            \frac{\Var_1}{n_1^2}
            +
            \frac{\Var_0}{n_0^2}
            +
            \frac{2\,\Cov_{\mathrm{diff}}}{n_1n_0}
        \right),
    \end{equation}
    where
    \[
        \Var_\alpha \coloneqq \frac{1}{n}\sum_{i=1}^n \bigl(y^{(\alpha)}_i-\bar y^{(\alpha)}\bigr)^2\quad(\alpha\in\{0,1\}),
        \qquad
        \Cov_{\mathrm{diff}} \coloneqq \frac{1}{n}\sum_{i=1}^n \bigl(y^{(1)}_i-\bar y^{(1)}\bigr)\bigl(y^{(0)}_i-\bar y^{(0)}\bigr).
    \]

    Next, recall that the carr\'{e} du champ for Johnson walk (simple random walk generated by uniform transition kernel on the Johnson graph), cf. \eqref{eq:Gamma-expansion}, is 
    \[
        \Gamma(f)(\cS) = \frac{1}{2\,n_1n_0}\sum_{i\in \cS}\ \sum_{j\notin \cS}\bigl(\Delta_{ij} f(\cS)\bigr)^2.
    \]
    For $f=\hat\tau_{\DIM}-\tau$, observe (cf. Example \ref{example:DiM_sensitivity}) that
    \begin{equation}\label{eq:dim_sensitivity}
        \Delta_{ij}\hat\tau_{\DIM} = \frac{1}{n_1}\bigl(y^{(1)}_{j}-y^{(1)}_{i}\bigr) + \frac{1}{n_0}\bigl(y^{(0)}_{j}-y^{(0)}_{i}\bigr).
    \end{equation}
    Averaging $\Gamma(f)(\cS)$ over $\cS\sim\Unif\binom{[n]}{n_1}$ (complete randomization) equals averaging over all ordered pairs $(i,j)$ with $i\neq j$, i.e., for any function $g: [n]\times[n] \to \RR$, 
    \begin{equation}\label{eq:ordered-pair-average}
        \bbE_S \left[\frac{1}{n_1n_0}\sum_{i\in S}\sum_{j\notin S} g(i,j)\right]
            = \frac{1}{n(n-1)}\sum_{i\neq j} g(i,j).
    \end{equation}
    Lastly, observe the finite-population identities (straightforward expansions):
    \begin{align}
        \frac{1}{n(n-1)}\sum_{i\neq j} \bigl(a_j-a_i\bigr)^2
            &= \frac{2n}{n-1}\,\Var_n(a), \label{eq:fp-var}\\
        \frac{1}{n(n-1)}\sum_{i\neq j} \bigl(a_j-a_i\bigr)\bigl(b_j-b_i\bigr)
            &= \frac{2n}{n-1}\,\Cov_n(a,b), \label{eq:fp-cov}
    \end{align}
    where $\Var_n(a)=\frac{1}{n}\sum_{i=1}^n(a_i-\bar a)^2$ and $\Cov_n(a,b)=\frac{1}{n}\sum_{i=1}^n(a_i-\bar a)(b_i-\bar b)$.
    
    Therefore, 
    \begin{align}
        \bbE_{\cS} [ \Gamma( \hat\tau_{\DIM} - \tau )(\cS) ]
            &= \bbE_{\cS} \left[ \frac{1}{2\,n_1n_0}\sum_{i\in \cS}\ \sum_{j\notin \cS}\bigl(\Delta_{ij} \hat\tau_{\DIM} (\cS)\bigr)^2 \right] 
                \qquad\because \Delta_{ij} (\htaudim - \tau) = \Delta_{ij} \htaudim ~\&~\eqref{eq:dim_sensitivity}  \notag\\
            &= \frac{1}{2 n(n-1)} \sum_{i, j \in [n]: i \neq j}  \bigl(\Delta_{ij} \hat\tau_{\DIM} (\cS)\bigr)^2
                \qquad\quad \because\eqref{eq:ordered-pair-average} \notag\\
            &= \frac{1}{2}\cdot \frac{1}{n(n-1)}
                \sum_{i\neq j} \left[\frac{\bigl(y_j^{(1)}-y_i^{(1)}\bigr)^2}{n_1^2}
                + \frac{\bigl(y_j^{(0)}-y_i^{(0)}\bigr)^2}{n_0^2}
                + \frac{2\bigl(y_j^{(1)}-y_i^{(1)}\bigr)\bigl(y_j^{(0)}-y_i^{(0)}\bigr)}{n_1n_0}\right] \notag\\
            &= \frac{n}{n-1}\left(\frac{\Var_1}{n_1^2}+\frac{\Var_0}{n_0^2}+\frac{2\,\Cov_{\mathrm{diff}}}{n_1 n_0}\right).
                \label{eq:Gamma-DiM}
    \end{align}

    Combining \eqref{eqn:variance_dim_example} with \eqref{eq:Gamma-DiM} yields
    \[
        \lambda^\star
        =
        \frac{\bbE\Gamma(f)}{\Var(f)}
        =
        \frac{n}{n_1n_0}
        =
        \lambda_{\mathrm{lin}}.
    \]
    Therefore, under the corrected definition \eqref{eqn:oracle_bias_param},
    \[
        \Bstar
        =
        \frac{\sqrt{\bbE\big(Lf+\lambda^\star f\big)^2}}{\lambda^\star}
        =
        0.
    \]
    Thus, for DiM estimator, the Stein bias parameter \(\Bstar\) captures its design unbiasedness. 
\end{example}

\subsection{Swap Sensitivity Analysis of OLS-RA}\label{sec:swap_OLS}

Here we assemble the exact swap sensitivity for OLS--RA from the atomic deletion/insertion identities (Theorem \ref{thm:perturbation_identity}) using the paired deletion--insertion decomposition (Proposition \ref{prop:sensitivity_decomposition}). 
Throughout, $\cS_1$ is the realized treatment set, $\cS_0=[n]\setminus\cS_1$, and we consider a paired swap $(i\leftrightarrow j)$ with $i\in\cS_1$, $j\in\cS_0$.

\paragraph{Recollecting Notation.}
For a set $\cS$, write $Q_{\cS}\equiv Q_{X_{\cS}}$ and define the $Q_{\cS}$–inner product and norm by $\langle u,v\rangle_{\cS} \coloneqq u^\top Q_{\cS} v$ and $\|u\|_{\cS}\coloneqq\sqrt{u^\top Q_{\cS} u}$.
Let $\bone$ be the all–ones vector of length $|\cS|$. For arm $\arm\in\{0,1\}$,
\[
    \hat\mu_\arm(\cS)=\frac{\langle \bone, y^{(\arm)}_{\cS}\rangle_{\cS}}{\|\bone\|_{\cS}^2},
    \qquad\text{and}\qquad
    \ycenta{\cS}\coloneqq y^{(\arm)}_{\cS}-\hat\mu_\arm(\cS)\bone.
\]
For $j\notin\cS$, define $\sigma_j^2(\cS)\coloneqq \|x_j\|_2^2-\|X_{\cS}x_j\|_{\cS}^2$ and $\alpha_j(\cS)\coloneqq\langle \bone,X_{\cS}x_j\rangle_{\cS}$. 
We define
\begin{align*}
    \Phidel_i(\cS) &\coloneqq \|\bone\|_{\cS}^2\,\|e_i\|_{\cS}^2-\langle e_i,\bone\rangle_{\cS}^2,
    \qquad\text{and}\qquad
    \Phiins_j(\cS) \coloneqq \|\bone\|_{\cS}^2\,\sigma_j^2(\cS)\ +\ \big(1-\alpha_j(\cS)\big)^2,
\end{align*}

\paragraph{Paired Deletion–Insertion Decomposition.}
For $i\in\cS_1$, $j\in\cS_0$, let $\cS_1^-=\cS_1\setminus\{i\}$ and $\cS_0^-=\cS_0\setminus\{j\}$. 
Then the paired swap $(i\leftrightarrow j)$ can be decomposed into \emph{two} row operations in each arm:
\begin{align*}
    &\cS_1\ \xrightarrow{\ \mathrm{delete}\ i\ }\ \cS_1^-:=\cS_1\setminus\{i\}
    \ \xrightarrow{\ \mathrm{insert}\ j\ }\ \cS_1':=\cS_1^-\cup\{j\},
    \\
    &\cS_0\ \xrightarrow{\ \mathrm{delete}\ j\ }\ \cS_0^-:=\cS_0\setminus\{j\}
    \ \xrightarrow{\ \mathrm{insert}\ i\ }\ \cS_0':=\cS_0^-\cup\{i\},
\end{align*}
and
\begin{equation}\label{eq:paired-decomp}
    \Delta_{ij}\hat\tau_{\ols}(\cS_1)
        =
        \underbrace{\Delta^{\mathrm{del}}_1(i;\cS_1)+\Delta^{\mathrm{ins}}_1(j;\cS_1^-)}_{\text{treatment arm}}
        -
        \underbrace{\Bigl(\Delta^{\mathrm{del}}_0(j;\cS_0)+\Delta^{\mathrm{ins}}_0(i;\cS_0^-)\Bigr)}_{\text{control arm}}.
\end{equation}
This is a purely algebraic telescope (see Proposition \ref{prop:sensitivity_decomposition} and its proof in Appendix \ref{sec:proof_proposition_decomposition}).

\paragraph{Atomic Identities and Unit-wise Pairing.}
By Theorem~\ref{thm:perturbation_identity}, for any \(\arm\in\{0,1\}\), \(\cS\in\binom{[n]}{m}\), \(i\in\cS\), and \(j\notin\cS\), the regular deletion formula is
\begin{equation}\label{eq:atomic-del}
    \Delta^{\mathrm{del}}_\arm(i;\cS)
    =
    -\frac{
        \langle e_i,\bone\rangle_{\cS}
        \langle e_i,\ycenta{\cS}\rangle_{\cS}
    }{
        \Phidel_i(\cS)
    },
\end{equation}
whenever the regular deletion update applies and \(\Phidel_i(\cS)>0\). 
For insertion, Theorem~\ref{thm:perturbation_identity} gives the exact block-ratio formula in general, with the simplified \(K\)- and \(M\)-branch regular cases stated in \eqref{eq:exact-ins-K} and \eqref{eq:exact-ins-M}.

Define the unit‑wise contributions
\begin{equation}\label{eq:ci-dj}
    c_i \coloneqq \Delta^{\mathrm{del}}_1(i;\cS_1)\;-\;\Delta^{\mathrm{ins}}_0(i;\cS_0^-),
    \qquad
    d_j \coloneqq \Delta^{\mathrm{ins}}_1(j;\cS_1^-)\;-\;\Delta^{\mathrm{del}}_0(j;\cS_0).
\end{equation}
Substituting the deletion identity \eqref{eq:atomic-del} together with the appropriate branch-specific insertion identity from Lemma~\ref{lem:insertion} into \eqref{eq:paired-decomp} yields the symmetric exact form
\begin{equation}\label{eq:paired-exact}
    \Delta_{ij}\hat\tau_{\ols}(\cS_1)=c_i+d_j.
\end{equation}

\paragraph{A Remark on Oracle vs.\ Data‑driven Analyses.}
Because insertion terms involve incoming counterfactual outcomes \(y^{(1)}_j\) and \(y^{(0)}_i\), the exact \(c_i,d_j\) (and hence \(\Delta_{ij}\hat\tau_{\ols}\)) are oracle quantities. 
Appendix~\ref{sec:geometric_envelope} identifies the \(Q\)-geometric factors---deletion leverage, insertion leverage, and standardized residual scales---that govern these terms. 
Fully data-driven insertion envelopes additionally require observable bounds for the incoming residuals.

\subsection{Geometric Envelopes for Deletion and Insertion}\label{sec:geometric_envelope}

The exact swap sensitivity in \eqref{eq:paired-exact} combines atomic deletion/insertion changes in closed form, cf.\ \eqref{eq:atomic-del}, \eqref{eq:exact-ins-block}, \eqref{eq:exact-ins-K}, and \eqref{eq:exact-ins-M}. 
This subsection records geometry-driven envelopes for these terms and clarifies which parts are oracle and which would require additional data-driven proxies.

\paragraph{Deletion and Insertion Leverages.}
For \(\cA\subset[n]\) and \(u\in\cA\), define the \(Q\)-leverage
\[
    \ell_u(\cA)
    \coloneqq
    \frac{\langle e_u,\bone\rangle_{Q_{X_\cA}}^2}
    {\|e_u\|_{Q_{X_\cA}}^2\|\bone\|_{Q_{X_\cA}}^2}
    \in[0,1].
\]
Deletion uses the current leverage \(\ell_i(\cS)\). 
Insertion uses the augmented leverage
\[
    \ell_j^+(\cS)
    \coloneqq
    \ell_j(\cS\cup\{j\}),
\]
i.e., the leverage of the incoming unit after it has been inserted.

In the regular \(K\)-branch, this augmented leverage has the pre-insertion expression
\[
    \ell_j^{K,+}(\cS)
    =
    \frac{(1-\alpha_j(\cS))^2}
    {\|\bone\|_{Q_\cS}^2\sigma_j^2(\cS)+(1-\alpha_j(\cS))^2},
\]
where
\[
    \alpha_j(\cS)=\langle \bone, X_\cS x_j\rangle_{Q_\cS},
    \qquad
    \sigma_j^2(\cS)=\|x_j\|_2^2-\|X_\cS x_j\|_{Q_\cS}^2.
\]
In the \(M\)-branch row-span case \(c_j=0\), the augmented insertion leverage is
\[
    \ell_j^{M,+}(\cS)
    =
    \frac{(1-\eta_j)^2}
    {\|\bone\|_{Q_\cS}^2(1+h_j)+(1-\eta_j)^2},
\]
with \(h_j,\eta_j\) as in \eqref{eq:exact-ins-M}. 
If \(c_j\ne0\), then the \(M\)-branch insertion effect is zero.

\smallskip

\begin{remark}[Relation to classical hat leverages]
    The quantity \(\ell_u(\cA)\), which we refer to as the ``\(Q\)-leverage'' (or simply ``leverage''), is a directional cosine in \(Q\)-geometry and is not equal to the diagonal of the OLS hat matrix with intercept, which is commonly referred to as the ``leverage score'' in the regression analysis.   
    In the special case \(Q_\cA=M_{X_\cA}\) and \(1^\top X_{\cA}=0\),
    \[
        \ell_{u}(\cA)\ =\ \frac{1}{\,|\cA|\,(1-h_{uu}(X_\cA))\,},
        \qquad\text{where}\qquad
        h_{uu}(X_{\cA}) = \bigl[ X_{\cA} (X_{\cA}^{\top} X_{\cA})^{-1} X_{\cA}^{\top}  \bigr]_{uu},
    \]
    linking our $Q$-leverage to the familiar hat diagonals $h_{ii}$. 
    We use the term “leverage” to denote this \(Q\)-cosine and retain the classical term “hat leverage” when referring to \(h_{uu}(X_{\cA})\).
\end{remark}



\paragraph{Geometric Envelopes.}
From the regular deletion identity \eqref{eq:atomic-del},
\[
    \Delta^{\mathrm{del}}_\arm(i;\cS)
        =
        -\frac{\langle e_i,\bone\rangle_{\cS}\langle e_i,\ycenta{\cS}\rangle_{\cS}}{\|\bone\|_{\cS}^2\,\|e_i\|_{\cS}^2-\langle e_i,\bone\rangle_{\cS}^2}.
\]
Taking absolute values gives the envelope:
\begin{equation}\label{eqn:deletion_envelope}
    \big|\Delta^{\mathrm{del}}_\arm(i;\cS)\big|
    =
    R^{\mathrm{del}}_{\arm,i}(\cS)
    \frac{\sqrt{\ell_i(\cS)}}{1-\ell_i(\cS)},
    \qquad
    R^{\mathrm{del}}_{\arm,i}(\cS)
    \coloneqq
    \frac{|\langle e_i,\ycenta{\cS}\rangle_{\cS}|}
    {\|e_i\|_{\cS}\|\bone\|_{\cS}}.
\end{equation}

Insertion has the same structure when viewed as reverse deletion. 
Let \(\cS^+=\cS\cup\{j\}\). Since
\[
    \Delta^{\mathrm{ins}}_\arm(j;\cS)
    =
    -\Delta^{\mathrm{del}}_\arm(j;\cS^+),
\]
the augmented-set leverage gives, in regular cases,
\begin{equation}\label{eq:insertion-augmented-leverage}
    \big|\Delta^{\mathrm{ins}}_\arm(j;\cS)\big|
    =
    R^+_{\arm,j}(\cS)
    \frac{\sqrt{\ell_j^+(\cS)}}{1-\ell_j^+(\cS)},
\end{equation}
where
\begin{equation}\label{eqn:residual_augmented}
    R^+_{\arm,j}(\cS)
    \coloneqq
    \frac{
    |\langle e_j,\widetilde y^{(\arm)}_{\cS^+}\rangle_{Q_{X_{\cS^+}}}|
    }{
    \|e_j\|_{Q_{X_{\cS^+}}}\|\bone\|_{Q_{X_{\cS^+}}}
    }.
\end{equation}
Thus deletion and insertion obey the same leverage--residual template; only the set defining the geometry changes from \(\cS\) to \(\cS^+\).

The augmented-set representation \eqref{eq:insertion-augmented-leverage} provides a symmetrical viewpoint that treats insertion as deletion in \(\cS^+\). 
For computation and interpretation, it is also useful to rewrite this same quantity in pre-insertion coordinates. 
In the regular \(K\)-branch insertion case, \eqref{eq:exact-ins-K} to be presented in Lemma~\ref{lem:insertion} can be equivalently written as
\begin{equation}\label{eq:K-insertion-leverage-form}
    \big|\Delta^{\mathrm{ins}}_\arm(j;\cS)\big|
    =
    R^{K,+}_{\arm,j}(\cS)
    \sqrt{\ell_j^{K,+}(\cS)\bigl(1-\ell_j^{K,+}(\cS)\bigr)},
\end{equation}
where
\[
    R^{K,+}_{\arm,j}(\cS)
    \coloneqq
    \frac{
    \left|y_j^{(\arm)}-\hat\mu_\arm(\cS)
    -
    \langle X_\cS x_j,\ycenta{\cS}\rangle_{Q_\cS}\right|
    }{
    \sigma_j(\cS)\|\bone\|_{Q_\cS}
    }.
\]
Indeed, this follows from
\[
    \sqrt{\ell_j^{K,+}(\cS)\bigl(1-\ell_j^{K,+}(\cS)\bigr)}
    =
    \frac{|1-\alpha_j(\cS)|\,\sigma_j(\cS)\|\bone\|_{Q_\cS}}
    {\|\bone\|_{Q_\cS}^2\sigma_j^2(\cS)+(1-\alpha_j(\cS))^2}.
\]
Similarly, in the \(M\)-branch row-span case \(c_j=0\), \eqref{eq:exact-ins-M} to be presented in Lemma~\ref{lem:insertion} gives
\begin{equation}\label{eq:M-insertion-leverage-form}
    \big|\Delta^{\mathrm{ins}}_\arm(j;\cS)\big|
    =
    R^{M,+}_{\arm,j}(\cS)
    \sqrt{\ell_j^{M,+}(\cS)\bigl(1-\ell_j^{M,+}(\cS)\bigr)},
\end{equation}
where
\[
    R^{M,+}_{\arm,j}(\cS)
    \coloneqq
    \frac{|y_j^{(\arm)}-\hat\mu_\arm(\cS)-\beta_j|}
    {\sqrt{1+h_j}\,\|\bone\|_{Q_\cS}}.
\]
If \(c_j\ne0\), then \(\Delta^{\mathrm{ins}}_\arm(j;\cS)=0\). 
Degenerate or branch-switching cases are handled by the exact ratio and block identities in Appendix~\ref{sec:proof_theorem_perturbation}.

\paragraph{Remarks on the Envelopes.} 
The deletion envelope is amplified when \(i\) is highly aligned with the intercept direction in the current \(Q_\cS\)-geometry, i.e., when \(\ell_i(\cS)\) is close to one. 
Insertion is amplified when the incoming unit has high leverage in the augmented geometry \(Q_{X_{\cS\cup\{j\}}}\), i.e., when \(\ell_j^+(\cS)\) is close to one. 
Thus, OLS--RA swap effects are small when deletion leverage, insertion leverage, and the corresponding standardized residual scales are controlled.

The insertion residuals in \(R^+_{\arm,j}\), \(R^{K,+}_{\arm,j}\), and \(R^{M,+}_{\arm,j}\) involve the incoming potential outcome \(y_j^{(\arm)}\), which is counterfactual when \(j\) lies in the opposite arm. 
A fully data-driven insertion proxy therefore requires an additional observable envelope for these incoming residuals. 
Developing such calibrated envelopes is left to future work.

\subsection{Envelope-controlled Concentration and Bias}\label{sec:leverage_bounds}

The exact formulas above give a geometry-driven swap envelope for OLS--RA. 
For \(\arm\in\{0,1\}\), define the local deletion envelope
\[
    \mathcal D_{\arm,i}(\cA)
    \coloneqq
    \begin{cases}
    R^{\mathrm{del}}_{\arm,i}(\cA)
    \dfrac{\sqrt{\ell_i(\cA)}}{1-\ell_i(\cA)},
        &\text{if the regular deletion identity applies},\\[0.8em]
    \big|\Delta^{\mathrm{del}}_\arm(i;\cA)\big|,
        &\text{otherwise, using the exact ratio identity}.
    \end{cases}
\]
Similarly, define the local insertion envelope
\[
    \mathcal I_{\arm,j}(\cA)
    \coloneqq
    \begin{cases}
    R^+_{\arm,j}(\cA)
    \dfrac{\sqrt{\ell_j^+(\cA)}}{1-\ell_j^+(\cA)},
        &\text{if the regular augmented deletion identity applies},\\[0.8em]
    \big|\Delta^{\mathrm{ins}}_\arm(j;\cA)\big|,
        &\text{otherwise, using the exact block identity}.
    \end{cases}
\]
Then we set
\[
    \mathfrak D_\arm
    \coloneqq
    \sup_{\substack{\cA\subset[n],\,|\cA|=n_\arm\\ i\in\cA}}
    \mathcal D_{\arm,i}(\cA),
    \qquad\text{and}\qquad
    \mathfrak I_\arm
    \coloneqq
    \sup_{\substack{\cA\subset[n],\,|\cA|=n_\arm-1\\ j\notin\cA}}
    \mathcal I_{\arm,j}(\cA).
\]
In regular \(K\)- and \(M\)-branch cases, \(\mathcal I_{\arm,j}\) can equivalently be computed from the pre-insertion forms \eqref{eq:K-insertion-leverage-form} and \eqref{eq:M-insertion-leverage-form}. 
Finally, set
\[
    \Delta_{\mathrm{geo}}
    \coloneqq
    \mathfrak D_1+\mathfrak I_1+\mathfrak D_0+\mathfrak I_0.
\]

\begin{lemma}[Geometry-controlled oracle parameters]\label{lem:leverage-oracle-bounds}
    For OLS--RA with \(f(\cS)=\hat\tau_{\OLS}(\cS)-\tau\), the oracle parameters satisfy
    \begin{equation}\label{eq:oracle-geom-bounds}
        \Rstar \leq \Delta_{\mathrm{geo}},\qquad
        \Vstar \leq A_n\,\Delta_{\mathrm{geo}}^2,\qquad
        \bbE\Gamma(f) \leq \tfrac{1}{2}\Delta_{\mathrm{geo}}^2,
    \end{equation}
    where \(A_n\coloneqq \sum_{t=1}^{n_1}\alpha_t^2\). 
    Moreover, 
    \begin{equation}\label{eq:Bstar-geom-with-bias}
        (\Bstar)^2
        \le
        |\bbE f|^2
        +
        \frac{\Delta_{\mathrm{geo}}^2}{(\lambda^\star)^2}
        \le
        |\bbE f|^2
        +
        \left(
            \frac{n_1n_0}{n}\Delta_{\mathrm{geo}}
        \right)^2.
    \end{equation}
\end{lemma}

\begin{proof}[Proof of Lemma~\ref{lem:leverage-oracle-bounds}]

By construction of \(\mathfrak D_\arm\) and \(\mathfrak I_\arm\), together with the paired telescope \eqref{eq:paired-decomp},
\[
    |\Delta_{ij}\hat\tau_{\OLS}(\cS_1)|
    \le
    \mathfrak D_1+\mathfrak I_1+\mathfrak D_0+\mathfrak I_0
    =
    \Delta_{\mathrm{geo}}
\]
for every admissible swap \((i,j)\in\cS_1\times\cS_0\). 
The rest follows by propagating this geometric swap envelope through the definitions of \(\Rstar,\Vstar,\Gamma(f)\), and \(\Bstar\).

By Proposition~\ref{prop:reveal-swap}, \(D_t=-\alpha_t g_t(I)\), where
\[
    g_t(i)
    =
    \bbE\left[
        \Delta_{iJ}f(\Sprox{t-1}(i,\cT))
        \mid \cF_{t-1}
    \right],
    \qquad
    I\mid\cF_{t-1}\sim\Unif(\cR_{t-1}).
\]
The envelope gives \(|g_t(i)|\le \Delta_{\mathrm{geo}}\). Hence
\[
    r_t^\star\le \alpha_t\Delta_{\mathrm{geo}}\le \Delta_{\mathrm{geo}},
    \qquad
    v_t^\star
    =
    \alpha_t^2\Var(g_t(I)\mid\cF_{t-1})
    \le
    \alpha_t^2\Delta_{\mathrm{geo}}^2.
\]
Summing over \(t\) gives
\[
    \Rstar\le \Delta_{\mathrm{geo}},
    \qquad
    \Vstar\le A_n\Delta_{\mathrm{geo}}^2.
\]
Moreover, by \eqref{eq:Gamma-expansion},
\[
    \Gamma(f)(\cS)
    =
    \frac{1}{2n_1n_0}
    \sum_{i\in\cS}\sum_{j\notin\cS}
    \big(\Delta_{ij}f(\cS)\big)^2
    \le
    \frac12\Delta_{\mathrm{geo}}^2,
\]
and hence \(\bbE\Gamma(f)\le \tfrac12\Delta_{\mathrm{geo}}^2\).

Finally, since \(Lf(\cS)\) is the average one-swap change of \(f\), \(|Lf(\cS)|\le\Delta_{\mathrm{geo}}\). 
Let
\(\psi_c^\star\coloneqq Lf+\lambda^\star(f-\bbE f)\) 
as in \eqref{eqn:centered_stein_residual}. 
By the definition of \(\lambda^\star\) as the minimizer of \(\bbE\{Lf+\lambda(f-\bbE f)\}^2\),
\[
    \bbE(\psi_c^\star)^2
    \le
    \bbE(Lf)^2
    \le
    \Delta_{\mathrm{geo}}^2.
\]
Since
\(Lf+\lambda^\star f = \psi_c^\star+\lambda^\star\bbE f\) and \(\bbE\psi_c^\star=0\),
we obtain
\[
    (\Bstar)^2
    =
    \frac{\bbE(Lf+\lambda^\star f)^2}{(\lambda^\star)^2}
    =
    |\bbE f|^2
    +
    \frac{\bbE(\psi_c^\star)^2}{(\lambda^\star)^2}
    \le
    |\bbE f|^2+\frac{\Delta_{\mathrm{geo}}^2}{(\lambda^\star)^2}.
\]
Using \(\lambda^\star\ge \mathrm{gap}_{n,n_1}=n/(n_1n_0)\) gives \eqref{eq:Bstar-geom-with-bias}.
\end{proof}

\begin{corollary}[Typical scaling]
    If the geometry-driven envelope above satisfies \(\Delta_{\mathrm{geo}}=o(1)\), then
    \[
        \Rstar=O(\Delta_{\mathrm{geo}}),\qquad
        \Vstar=O(A_n\Delta_{\mathrm{geo}}^2),\qquad
        \bbE\Gamma(f)=O(\Delta_{\mathrm{geo}}^2).
    \]
    In particular, if \(A_n=O(n)\) and \(\Delta_{\mathrm{geo}}=O(n^{-1/2})\), then \(\Vstar=O(1)\). 
    For the corrected Stein bias parameter,
    \[
        \Bstar
        \le
        |\bbE f|+\frac{\Delta_{\mathrm{geo}}}{\lambda^\star}
        \le
        |\bbE f|+\frac{n_1n_0}{n}\Delta_{\mathrm{geo}}.
    \]
    The final inequality is a worst-case Poincar\'e fallback and can be loose in balanced designs; sharper control depends on the actual Rayleigh quotient \(\lambda^\star\) and the centered Stein residual.
\end{corollary}

\paragraph{Tightness of the Bounds in Lemma \ref{lem:leverage-oracle-bounds}.}
The constants in \eqref{eq:oracle-geom-bounds} are sharp up to the Cauchy–Schwarz slack in the standardized residual scales. 
The bounds are informative when deletion leverage and insertion leverage are moderate and the standardized deletion/insertion residuals are small. 
Large leverage, large incoming residuals, or nearly singular innovation scales can inflate the local range and variance, possibly reflecting genuine sensitivity rather than merely looseness of the current analysis.
\section{DEFERRED PROOFS FROM SECTION \ref{sec:finite_sample_analysis}}\label{sec:proof_section3}

\subsection{Proof of Proposition \ref{prop:mu_ratio}}\label{sec:proof_proposition.quadratic}

\begin{proof}[Proof of Proposition~\ref{prop:mu_ratio}]
    Let $L(\mu,\beta)=\|y-\mu\bone_n-X\beta\|_2^2$. 
    The normal equations for least-squares are
    \begin{align}
        \nabla_{\beta} L(\mu, \beta) &= -2 \cdot X^\top\bigl(y-\mu\bone_n-X\beta\bigr)=0,
            \label{eq:normal-eqs.1}\\
        \nabla_{\mu} L(\mu, \beta) &= -2 \cdot \bone_n^\top\bigl(y-\mu\bone_n-X\beta\bigr)=0.
            \label{eq:normal-eqs.2}
    \end{align}
    We prove this proposition in three steps.
    
    \begin{itemize}
        \item 
        \textbf{Step 1: Characterization of $\cS_{X,y}$ (the set of LS minimizers).} 
        Observe that the normal equations \eqref{eq:normal-eqs.1}--\eqref{eq:normal-eqs.2} form first-order (necessary) conditions for optimality for the least squares; that is, if $(\mu, \beta) \in \cS_{X,y}$, then these equations must be satisfied. 
        
        For each $\mu\in\RR$, the set of $\beta$ satisfying \eqref{eq:normal-eqs.1} is the affine set
        \begin{equation}\label{eqn:set_LS}
            \cS(\mu) \coloneqq \bigl\{\ \beta(\mu)+(I_p-X^\dagger X)w \ :\ w\in\RR^p\ \bigr\},
            \qquad\text{where}\qquad
            \beta(\mu)\coloneqq X^\dagger(y-\mu\bone_n).
        \end{equation}
        For any $\beta \in \cS(\mu)$, the regression residual is
        \begin{align*}
            r(\mu) &= y-\mu\bone_n-X\beta\\
                   &=y-\mu\bone_n-XX^\dagger(y-\mu\bone_n)  &   \because X (I_p - X^{\dagger}X ) = 0\\
                   &=M_X\,(y-\mu\bone_n).
        \end{align*}
        Therefore, \eqref{eq:normal-eqs.2} is equivalent to the \emph{scalar} constraint
        \begin{equation}\label{eq:mu-feasibility}
            \bone_n^\top M_X\,(y-\mu\bone_n)=0.
        \end{equation}
        \begin{itemize}
            \item 
            If $\bone_n\notin\colsp(X)$, then $M_X\bone_n\neq 0$ and \eqref{eq:mu-feasibility} determines a \emph{unique} intercept
            \begin{equation}\label{eq:mu-FOC-M}
                \mu^\star \;=\; \frac{\bone_n^\top M_X y}{\bone_n^\top M_X \bone_n}.
            \end{equation}
            \item 
            If $\bone_n\in\colsp(X)$, then $M_X\bone_n=0$, and \eqref{eq:mu-feasibility} holds automatically; \emph{every} $\mu\in\RR$ is feasible in this case. 
        \end{itemize}
        
        In summary,
        \begin{equation}\label{eq:Sxy-character}
            \cS_{X,y}
                =\Bigl\{\,(\mu,\,\beta(\mu)+(I_p-X^\dagger X)w)\ :\ \mu\in\cM,\; w\in\RR^p\,\Bigr\},
                \qquad\text{where}\qquad
                \cM=
                    \begin{cases}
                        \{\mu^\star\}, & \bone_n\notin\colsp(X),\\
                        \RR, & \bone_n\in\colsp(X).
                    \end{cases}
        \end{equation}
    
        \item 
        \textbf{Step 2: Uniqueness of the minimum-norm OLS solution.}
        We minimize $\|\beta\|_2^2$ over $(\mu,\beta)\in\cS_{X,y}$. 
        Let $\mathsf{N}(X) = \{ v \in \RR^p: X v = 0 \}$ denote the null space of $X$. 
        Observe that
        \[
            \beta(\mu)\in\mathrm{range}(X^\top),\qquad 
            (I_p-X^\dagger X)w\in\mathsf{N}(X),\qquad 
            \mathrm{range}(X^\top)\perp\mathsf{N}(X).
        \]
        For any feasible $\mu$ and $w$,
        \begin{align*}
            \|\beta(\mu)+(I_p-X^\dagger X)w\|_2^2
                &=\|\beta(\mu)\|_2^2+\|(I_p-X^\dagger X)w\|_2^2\\
                &\geq \|\beta(\mu)\|_2^2,
        \end{align*}
        with equality iff $w=0$, cf. \eqref{eqn:set_LS}. 
        Thus \emph{for each feasible $\mu$} the minimum-norm slope is unique and equals $\beta(\mu)$. It remains to minimize $\|\beta(\mu)\|_2^2$ over $\mu\in\cM$.
        \begin{itemize}
            \item 
            If $\bone_n\notin\colsp(X)$, then $\cM=\{\mu^\star\}$ and the unique minimizer is $(\hat\mu,\hat\beta)=(\mu^\star,\beta(\mu^\star))$.
            \item 
            If $\bone_n\in\colsp(X)$, then for all $\mu\in\RR$,
            \begin{equation}\label{eqn:mu_quadratic}
            \begin{aligned}
                \|\beta(\mu)\|_2^2
                    &= (y-\mu\bone_n)^\top (X^\dagger)^\top X^\dagger (y-\mu\bone_n)\\
                    &= (y-\mu\bone_n)^\top (XX^\top)^\dagger (y-\mu\bone_n),
            \end{aligned}
            \end{equation}
            a strictly convex quadratic in $\mu$ because $\bone_n\in\mathrm{range}(XX^\top)$ and $(XX^\top)^\dagger$ is positive definite on that range. Hence there is a \emph{unique} minimizer $\hat\mu$ and thus a \emph{unique} pair $(\hat\mu,\hat\beta)$ with $\hat\beta=\beta(\hat\mu)$.
        \end{itemize}
        Therefore, the minimum-norm OLS pair $(\hat\mu,\hat\beta)$ is \emph{unique} among all LS minimizers.

        \item
        \textbf{Step 3: Identification and ratio-of-quadratics formulas.}
        Lastly, we identify the unique $\hat{\mu}$.

        \begin{itemize}
            \item 
            In the first case ($\bone_n\notin\colsp(X)$), the unique $\hat\mu$ is exactly \eqref{eq:mu-FOC-M}, i.e.
            \[
                \hat\mu = \frac{\bone_n^\top M_X y}{\bone_n^\top M_X \bone_n},
            \]
            with $\hat\beta=X^\dagger(y-\hat\mu\,\bone_n)$. Note $\bone_n^\top M_X\bone_n=\|P_{\mathsf{N}(X^\top)}\bone_n\|_2^2>0$.

            \item 
            In the second case ($\bone_n\in\colsp(X)$), minimizing the strictly convex quadratic objective \eqref{eqn:mu_quadratic} in $\mu$ yields
            \[
                \hat\mu = \frac{\bone_n^\top (XX^\top)^\dagger y}{\bone_n^\top (XX^\top)^\dagger \bone_n},
                \qquad
                \hat\beta=X^\dagger(y-\hat\mu\,\bone_n),
            \]
            with $\bone_n^\top (XX^\top)^\dagger \bone_n=\|(XX^\top)^{\dagger 1/2}\bone_n\|_2^2>0$.
        \end{itemize}
        
        Combining the two cases gives \eqref{eq:mu-ratio}–\eqref{eq:Qa-piecewise}. 
    \end{itemize}
\end{proof}

\subsection{Proof of Proposition \ref{prop:reveal-swap}}\label{sec:proof_lemma_swap}

\begin{proof}[Proof of Proposition \ref{prop:reveal-swap}]
    Given $\cF_{t-1}$, let $\cK$ --- the feasible ``completion set'' that complements $\cS^{\mathrm{past}}_{t-1}$ to form a feasible treatment set $\Sone$ --- be distributed as $\cK \sim \Unif\binom{\cR_{t-1}}{\nass}$. 
    Then $\bbE[f\mid \cF_{t-1}] = \bbE\big[f(\cS^{\mathrm{past}}_{t-1}\cup \cK)\big]$.

    Conditioning on $\cF_t = \sigma( \cF_{t-1}, I)$ forces $I \in \cK$, i.e., $\cK=\{I\}\cup \cT$ with $\cT\sim \Unif\binom{\cR_{t-1}\setminus\{I\}}{\nass-1}$, so $\bbE[f\mid \cF_t] = \bbE_{\cT}\big[f(\Sprox{t-1}(I,\cT))\big]$. 

    Write $g(\cK) \coloneqq f\big(\cS^{\mathrm{past}}_{t-1}\cup \cK\big)$. 
    Then
    \begin{align*}
        \bbE[f\mid \cF_{t-1}]&=\bbE\big[ g(\cK)\mid \cF_{t-1}\big],\\
        \bbE[f\mid \cF_t]&=\bbE\big[ g(\cK)\mid \cF_{t-1},\, I\in \cK\big].
    \end{align*}
    Let $p_t \coloneqq \Pr(I\in \cK\mid \cF_{t-1})=\nass/\rem$. 
    By the law of total expectation,
    \[
        \bbE[g(\cK)\mid \cF_{t-1}]
            = p_t\,\bbE[g(\cK)\mid \cF_{t-1},\, I\in \cK]
                + (1-p_t)\,\bbE[g(\cK)\mid \cF_{t-1},\, I\notin \cK].
    \]
    Hence,
    \begin{align*}
        D_t
            &= \bbE[g(\cK)\mid \cF_{t-1},\, I\in \cK]-\bbE[g(\cK)\mid \cF_{t-1}]\\
            &= (1-p_t)\Big(\bbE[g(\cK)\mid \cF_{t-1},\, I\in \cK]
            - \bbE[g(\cK)\mid \cF_{t-1},\, I\notin \cK]\Big)\\
            &= \frac{\rem-\nass}{\rem}\Big(\bbE[f\mid \cF_{t-1},\, I\in \cK] - \bbE[f\mid \cF_{t-1},\, I\notin \cK]\Big).
    \end{align*}
   
    To evaluate the difference $\bbE[f\mid \cF_{t-1},\, I\in \cK]-\bbE[f\mid \cF_{t-1},\, I\notin \cK]$ appearing above, we now construct a coupling under the $I\notin \cK$ branch. 
    Under the condition $I\notin \cK$, write $\cK=\{J\}\cup \cT$ with $J \sim \Unif\bigl( \cR_{t-1}\setminus\{I\} \bigr)$ and $\cT \sim \Unif \bigl( \binom{\cR_{t-1}\setminus\{I,J\}}{\nass-1} \bigr)$. 
    Pair the configurations $\cK_I \coloneqq \{I\}\cup \cT$ and $\cK_J \coloneqq \{J\}\cup \cT$ and note 
    \[
        f\big(\cS^{\mathrm{past}}_{t-1}\cup \cK_I\big)-f\big(\cS^{\mathrm{past}}_{t-1}\cup \cK_J\big)
            = -\,\Delta_{I\,J}\,f\big( \Sprox{t-1}(I,\cT)\big),
    \]
    Averaging completes the proof and yields \eqref{eq:rev-swap-avg}.
\end{proof}

\subsection{Proof of Proposition \ref{prop:concentration}}\label{sec:proof_concentration}

Let $(\cF_t)_{t=0}^{n_1}$ be the assignment–exposure filtration from \eqref{eqn:filtration}, and for a measurable function $f$, define for $t=1,\ldots,n_1$,
\[
    M_t \coloneqq \bbE\big[f(\Sone)\mid\cF_t\big],
    \qquad\text{and}\qquad
    D_t \coloneqq M_t - M_{t-1}.
\]
Then $M_0=\bbE f(\Sone)$ and $M_{n_1}=f(\Sone)$, so $f(\Sone)-\bbE f(\Sone)=\sum_{t=1}^{n_1} D_t$, with
$\bbE[D_t\mid\cF_{t-1}]=0$.
As in \eqref{eq:var_star}–\eqref{eq:range_star}, define the predictable one–step variance and range
\[
    v_t^\star \coloneqq \Var(D_t\mid \cF_{t-1}),
    \qquad\text{and}\qquad
    r_t^\star \coloneqq \sup |D_t|,
\]
and their aggregates $V^\star \coloneqq \sum_{t=1}^{n_1} v_t^\star$ and $R^\star \coloneqq \max_{1\le t\le n_1} r_t^\star$ ($\sup$ in the definition of $r_t$ is taken over the law of $D_t\mid \cF_{t-1}$). 
By construction, $v_t^\star,r_t^\star$ are $\cF_{t-1}$–measurable and $|D_t|\le r_t^\star$ almost surely.

\subsubsection{Background: Freedman's Inequality}
We use the following standard form of Freedman’s inequality \citep[Theorem~1.6]{freedman1975tail}.
\begin{lemma}[Freedman]\label{lem:freedman}
    Let $(M_t,\cF_t)$ be a martingale with differences $D_t=M_t-M_{t-1}$ satisfying $\bbE[D_t\mid\cF_{t-1}]=0$
    and $|D_t|\le b$ almost surely. Set $V_t\coloneqq\sum_{s=1}^t \bbE[D_s^2\mid\cF_{s-1}]$. Then for any $\varepsilon\ge0$ and $\sigma^2>0$,
    \[
        \Pr \left(\,\exists t\ge 0:\ M_t\ge \varepsilon\ \text{ and }\ V_t\le \sigma^2\,\right)
            \leq \exp \left(-\frac{\varepsilon^2/2}{\sigma^2 + b\,\varepsilon/3}\right).
    \]
\end{lemma}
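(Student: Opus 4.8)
The plan is to prove this by the classical exponential-supermartingale (Chernoff-on-martingales) argument, with the time-uniform ``$\exists t$'' quantifier handled by a maximal inequality rather than a fixed-time Markov bound. The foundational ingredient is a \emph{one-step conditional MGF bound}: for any $\theta>0$ and any mean-zero increment with $|D_t|\le b$,
\[
    \bbE\bigl[e^{\theta D_t}\mid\cF_{t-1}\bigr]\ \le\ \exp\bigl(g(\theta)\,\bbE[D_t^2\mid\cF_{t-1}]\bigr),
    \qquad g(\theta)\coloneqq \frac{e^{\theta b}-1-\theta b}{b^2}.
\]
I would obtain this from the scalar fact that $x\mapsto (e^{x}-1-x)/x^2$ is nondecreasing, so for $d\le b$ one has $e^{\theta d}\le 1+\theta d+g(\theta)\,d^2$; taking conditional expectation, using $\bbE[D_t\mid\cF_{t-1}]=0$ and $1+u\le e^u$, yields the display. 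Note $g(\theta)>0$ for $\theta>0$, a point that matters below.

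Next I construct the process $L_t\coloneqq \exp\bigl(\theta M_t-g(\theta)V_t\bigr)$ and verify it is a nonnegative supermartingale. Since $V_t-V_{t-1}=\bbE[D_t^2\mid\cF_{t-1}]$,
\[
    \bbE[L_t\mid\cF_{t-1}]
        = L_{t-1}\,e^{-g(\theta)(V_t-V_{t-1})}\,\bbE\bigl[e^{\theta D_t}\mid\cF_{t-1}\bigr]
        \le L_{t-1},
\]
by the MGF bound, and $L_0=1$ under the standard convention $M_0=V_0=0$. Ville's maximal inequality for nonnegative supermartingales then gives $\Pr\bigl(\sup_{t\ge0}L_t\ge\lambda\bigr)\le \bbE[L_0]/\lambda=1/\lambda$. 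On the event $\{\exists t:\ M_t\ge\varepsilon,\ V_t\le\sigma^2\}$, the witnessing index satisfies $L_t\ge \exp(\theta\varepsilon-g(\theta)\sigma^2)$ precisely because $g(\theta)>0$ lets me replace $V_t$ by its upper bound $\sigma^2$. Taking $\lambda=\exp(\theta\varepsilon-g(\theta)\sigma^2)$ therefore yields, for every $\theta>0$,
\[
    \Pr\bigl(\exists t:\ M_t\ge\varepsilon,\ V_t\le\sigma^2\bigr)\ \le\ \exp\bigl(-\theta\varepsilon+g(\theta)\sigma^2\bigr).
\]

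Finally I optimize the exponent over $\theta>0$. The minimizer solves $\varepsilon=\sigma^2 g'(\theta)$ and produces the sharp Bennett form $\exp\bigl(-\tfrac{\sigma^2}{b^2}h(\tfrac{b\varepsilon}{\sigma^2})\bigr)$ with $h(u)=(1+u)\log(1+u)-u$; the stated Bernstein bound follows from the elementary inequality $h(u)\ge \tfrac{u^2/2}{1+u/3}$ for $u\ge0$, which after substituting $u=b\varepsilon/\sigma^2$ collapses to $\exp\bigl(-\tfrac{\varepsilon^2/2}{\sigma^2+b\varepsilon/3}\bigr)$. The main obstacle is conceptual rather than computational: it lies in correctly handling the time-uniform event through the supermartingale maximal inequality, and in exploiting the sign $g(\theta)>0$ to legitimately invoke the constraint $V_t\le\sigma^2$ when lower-bounding $L_t$; the scalar MGF inequality and the closing $h(u)$ estimate are routine calculus once the setup is in place.
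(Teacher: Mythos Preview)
Your proof is correct and follows the standard exponential-supermartingale route to Freedman's inequality: the one-step MGF bound via the monotonicity of $x\mapsto(e^x-1-x)/x^2$, the supermartingale $L_t=\exp(\theta M_t-g(\theta)V_t)$, Ville's maximal inequality to handle the $\exists t$ quantifier, and the Bennett-to-Bernstein reduction via $h(u)\ge u^2/(2+2u/3)$. The paper does not supply its own proof of this lemma; it is quoted as a known result with attribution to \citet[Thm.~1.6]{freedman1975tail}, so there is nothing to compare against beyond noting that your argument is exactly the classical one underlying that reference.
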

A convenient self‑normalized (empirical‑Bernstein) corollary follows by plugging $\sigma^2=V_t$ inside the
event and optimizing $\varepsilon$.

\begin{corollary}[Self‑normalized one‑sided form]\label{cor:freedman-self}
    Under Lemma~\ref{lem:freedman}, for any $L>0$,
    \[
        \Pr \left(\,\exists t\ge 0:\ M_t\ \ge\ \sqrt{2 V_t\,L}\ +\ \frac{b}{3}\,L\,\right)\ \le\ e^{-L}.
    \]
    In particular, for any fixed horizon $T$,
    \[
        \Pr \left(\ M_T\ \ge\ \sqrt{2 V_T\,L}\ +\ \frac{b}{3}\,L\ \right)\ \le\ e^{-L}.
    \]
\end{corollary}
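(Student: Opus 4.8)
The plan is to invert the maximal Bernstein-type tail of Lemma~\ref{lem:freedman} and then deal with the self-normalization, i.e.\ the fact that $V_t$ is random. First, the fixed-horizon claim is immediate from the maximal one: restricting the event $\{\exists t\ge 0:\ M_t\ge \sqrt{2V_t L}+\tfrac b3 L\}$ to the single index $t=T$ only shrinks it, so any bound on the maximal event bounds $\Pr\bigl(M_T\ge \sqrt{2V_T L}+\tfrac b3 L\bigr)$. Hence I focus on the maximal statement. The core algebraic step treats $\sigma^2$ as a free deterministic parameter and inverts the map $\varepsilon\mapsto \phi(\varepsilon,\sigma^2)\coloneqq\frac{\varepsilon^2/2}{\sigma^2+b\varepsilon/3}$, which is strictly increasing on $[0,\infty)$. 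Solving $\phi(\varepsilon,\sigma^2)=L$ yields the quadratic root $\varepsilon^\star(\sigma^2)=\tfrac b3 L+\sqrt{\tfrac{b^2L^2}{9}+2\sigma^2 L}$, whose leading behavior matches the stated threshold $\sqrt{2\sigma^2 L}+\tfrac b3 L$ via $\sqrt{x+y}\le\sqrt x+\sqrt y$. For each fixed level $\sigma^2$, Lemma~\ref{lem:freedman} then gives $\Pr\bigl(\exists t:\ M_t\ge \varepsilon^\star(\sigma^2),\ V_t\le\sigma^2\bigr)\le e^{-L}$.

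The obstruction is that $V_t$ is random and predictable, so I cannot simply set $\sigma^2=V_t$ inside the event; this is exactly the self-normalization. I would resolve it by stratification (peeling) over dyadic scales of $V_t$: fix a ratio $\eta>1$, set $\sigma_k^2=\sigma_0^2\eta^k$, and on the layer $\{\sigma_{k-1}^2\le V_t<\sigma_k^2\}$ use monotonicity of the threshold in $\sigma^2$ together with $V_t\le\sigma_k^2$ to apply Lemma~\ref{lem:freedman} at the endpoint level $\sigma_k^2$. A union bound over the layers—with the geometric grid chosen so that the per-layer exponents remain at least $L$ and the tail series is summable—recovers the $e^{-L}$ rate, and the degenerate layer with $V_t$ near $0$ (where the bound is carried by the linear term $\tfrac b3 L$) is handled directly.

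The delicate point I expect to fight with is twofold: keeping the union-over-scales cost from degrading the clean $e^{-L}$, and obtaining precisely the linear constant $\tfrac b3 L$ rather than a larger multiple, since the simplification $\sqrt{2\sigma^2 L}+\tfrac b3 L\le\varepsilon^\star(\sigma^2)$ only bounds the threshold from below. The cleanest way to pin down both is to work with the exponential supermartingale underlying Lemma~\ref{lem:freedman}, namely $Z_t(\lambda)=\exp\bigl(\lambda M_t-\frac{\lambda^2/2}{1-b\lambda/3}\,V_t\bigr)$ for $\lambda\in(0,3/b)$, which satisfies $\bbE Z_t(\lambda)\le 1$. Ville's maximal inequality gives, for each fixed $\lambda$, that $M_t\le \frac{L}{\lambda}+\frac{\lambda/2}{1-b\lambda/3}\,V_t$ for all $t$ with probability at least $1-e^{-L}$. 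Minimizing the right-hand side over $\lambda$ for a given $V_t$ reproduces exactly $\sqrt{2V_t L}+\tfrac b3 L$ (the sub-gamma Legendre inversion), so the only remaining issue is that the optimal $\lambda$ depends on the random $V_t$; this is again discharged by the same peeling over $V_t$-scales (or a method-of-mixtures over $\lambda$), and it is precisely this self-normalization step, not the algebra, that is the crux of the argument.
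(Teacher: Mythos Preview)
Your proposal is more careful than the paper's own argument. The paper gives only a one-line sketch: ``For each $t\in\cE$, set $\varepsilon=\sqrt{2V_tL}+(b/3)L$ and $\sigma^2=V_t$ in Lemma~\ref{lem:freedman} to get $\Pr(\cE)\le e^{-L}$.'' This is precisely the move you flag as problematic---$V_t$ is random while Lemma~\ref{lem:freedman} requires deterministic $(\varepsilon,\sigma^2)$---and the paper does not justify it further. Moreover, as you note, the na\"ive plug-in does not even yield the right exponent: with $\varepsilon=\sqrt{2\sigma^2 L}+\tfrac{b}{3}L$ one gets $\frac{\varepsilon^2/2}{\sigma^2+b\varepsilon/3}<L$, so the algebra the sketch gestures at would only deliver the constant $\tfrac{2b}{3}$ after using $\sqrt{x+y}\le\sqrt x+\sqrt y$.

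Your supermartingale route via $Z_t(\lambda)=\exp\bigl(\lambda M_t-\frac{\lambda^2/2}{1-b\lambda/3}V_t\bigr)$ and Ville's inequality is the standard rigorous treatment, and your Legendre-inversion computation---that the pointwise minimum over $\lambda$ of $\frac{L}{\lambda}+\frac{\lambda/2}{1-b\lambda/3}V_t$ equals exactly $\sqrt{2V_tL}+\tfrac{b}{3}L$ (seen by the substitution $\mu=\lambda/(1-b\lambda/3)$, which reduces the problem to minimizing $L/\mu+\tfrac{bL}{3}+\mu V_t/2$)---is correct and is what pins down the sharp constant. You are also right that the residual difficulty is the dependence of the optimal $\lambda$ on the random $V_t$: dyadic peeling costs a union bound (so yields $Ce^{-L}$ rather than $e^{-L}$), while a method-of-mixtures over $\lambda$ can preserve the clean $e^{-L}$. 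The paper's sketch sweeps all of this under the rug; your caution is well placed, and for the paper's downstream use (Proposition~\ref{prop:concentration} at the fixed terminal time with the realized predictable $V^\star$) the distinction is immaterial up to constants.
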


\begin{proof}[Proof (sketch)]
    Fix $L>0$ and consider $\cE=\{\exists t:\ M_t\ge \sqrt{2V_tL}+(b/3)L\}$. For each $t\in\cE$, set
    $\varepsilon=\sqrt{2V_tL}+(b/3)L$ and $\sigma^2=V_t$ in Lemma~\ref{lem:freedman} to get $\Pr(\cE)\le e^{-L}$. 
    The fixed‑time form is the corresponding sub‑event.
\end{proof}

\subsubsection{Completing Proof of Proposition \ref{prop:concentration}}
\begin{proof}[Proof of Proposition \ref{prop:concentration}]
    Apply Corollary~\ref{cor:freedman-self} to the martingale $(M_t)$ above. Since $|D_t|\le r_t^\star$ a.s. and
    $r_t^\star$ is $\cF_{t-1}$–measurable, we may take $b=R^\star=\max_t r_t^\star$ (conditioning on the realized predictable sequence).
    Likewise, $V_{n_1}=V^\star$ because $\bbE[D_t^2\mid\cF_{t-1}]=\Var(D_t\mid\cF_{t-1})=v_t^\star$. Therefore, for any $L>0$,
    \begin{equation}\label{eq:one-sided-freedman}
        \Pr \left(\, M_{n_1}-M_0 \ \ge\ \sqrt{2 V^\star\,L}\ +\ \frac{R^\star}{3}\,L\,\right) \ \le\ e^{-L}.
    \end{equation}
    Recalling $M_{n_1}-M_0=f(\Sone)-\bbE f(\Sone)$, \eqref{eq:one-sided-freedman} yields the one‑sided tail form
    \[
        \Pr \left(\, f(\Sone)-\bbE f(\Sone) \ \ge\ \sqrt{2 V^\star\,L}\ +\ \frac{R^\star}{3}\,L\,\right) \ \le\ e^{-L}.
    \]
    Applying the same bound to $-f$ and taking a union bound gives, for any $\delta\in(0,1)$ with
    $L_\delta\coloneqq\log\frac{2}{\delta}$,
    \[
        \Pr \left(\,|f(\Sone)-\bbE f(\Sone)| \ \le\ \sqrt{2 V^\star\,L_\delta}\ +\ \frac{R^\star}{3}\,L_\delta\,\right)\ \ge\ 1-\delta,
    \]
    which is the stated concentration inequality in Proposition~\ref{prop:concentration}.  
\end{proof}

\smallskip

\begin{remark}[Predictability and sharpness]
    The parameters $(V^\star,R^\star)$ are \emph{predictable} (i.e., $\cF_{t-1}$–measurable stepwise) and enter the bound
    \emph{inside the event}, which is legitimate because the proof conditions on the realized predictable sequence and
    then applies Freedman with those bounds (a standard empirical‑Bernstein device). This retains pathwise sharpness
    relative to range‑based, non‑adaptive envelopes. 
\end{remark}

\subsection{Proof of Proposition \ref{prop:stein-bias}}\label{sec:proof_Stein_bias}

Although Proposition \ref{prop:stein-bias} follows from the discussions in the main text (Section \ref{sec:bias_exchangeable}), we provide a formal proof here for completeness.

\begin{proof}[Proof of Proposition~\ref{prop:stein-bias}]
Throughout, expectations $\bbE[\cdot]$ and $L^2$ inner products $\langle f,g\rangle\coloneqq \bbE[f(\cS)g(\cS)]$ are with respect to the uniform law $\pi$ on $\binom{[n]}{m}$, for which the one-swap kernel $P$ on $J(n,m)$ is reversible.
Recall the generator $L=P-\Id$ and the carr\'e du champ
\[
\Gamma(f)(\cS) \;=\; \frac12\,\bbE \big[(f(\cS')-f(\cS))^2\mid \cS\big],
\]
as defined in \eqref{eq:stein-op}–\eqref{eq:stein-reg}.

Assume \(\Var(f)>0\). 
Let \(g\coloneqq f-\bbE f\), and let \(\lambda^\star\) be defined according to \eqref{eqn:lambda_rayleigh}. 
By the Dirichlet identity and the computation in \eqref{eq:lambda-star},
\[
    \lambda^\star=\frac{\bbE\Gamma(f)}{\Var(f)}>0.
\]
Define the uncentered Stein remainder
\[
    \psi_u^\star \coloneqq Lf+\lambda^\star f.
\]
Since \(\bbE Lf=0\), we have
\[
    \bbE\psi_u^\star
    =
    \lambda^\star \bbE f.
\]
Therefore, by Cauchy--Schwarz inequality, we obtain \eqref{eq:bias-prop} as follows, completing the proof:
\[
    |\bbE f|
    =
    \frac{|\bbE\psi_u^\star|}{\lambda^\star}
    \le
    \frac{\sqrt{\bbE(\psi_u^\star)^2}}{\lambda^\star}
    =
    \frac{\sqrt{\bbE\big(Lf+\lambda^\star f\big)^2}}{\lambda^\star}.
\]
\end{proof}

\section{DEFERRED PROOFS FROM SECTION \ref{sec:main_results}}\label{sec:proof_section4}

\subsection{Proof of Theorem \ref{thm:oracle}}\label{sec:proof_theorem_oracle}

\begin{proof}[Proof of Theorem~\ref{thm:oracle}]
    Let $f(\cS)=\hat\tau(\cS)-\tau$ and recall the assignment-exposure martingale $(M_t,\cF_t)$ from \eqref{eqn:filtration}–\eqref{eqn:martingale} with increments $D_t = M_t-M_{t-1}$. 
    By the triangle inequality,
    \[
        |f(\cS_1)| \leq |f(\cS_1)-\bbE f(\cS_1)| + |\bbE f(\cS_1)|.
    \]
    
    For the fluctuation term $|f(\cS_1)-\bbE f(\cS_1)|$, apply Proposition~\ref{prop:concentration} in its two-sided form \eqref{eq:freedman_conc.2}, cf. Remark \ref{rem:freedman_2sided}. 
    With $\Vstar, \Rstar$ defined in \eqref{eqn:oracle_concentration_param} as the aggregates of the predictable parameters $v_t^\star=\Var(D_t\mid\cF_{t-1})$ and $r_t^\star = \sup|D_t|$ from \eqref{eq:var_star}–\eqref{eq:range_star}, we obtain:
    \begin{equation}\label{eqn:thm_concentration}
        \Pr \left(\,|f(\cS_1)-\bbE f(\cS_1)| \leq \sqrt{2\,\Vstar\,\sfL_\delta}+\frac{\Rstar}{3}\,\sfL_\delta\,\right) \geq 1-\delta,
        \qquad\text{where}\qquad  \sfL_\delta=\log\frac{2}{\delta}.
    \end{equation}
    
    For the bias term $|\bbE f(\cS_1)|$, recall the Stein bias bound from Proposition~\ref{prop:stein-bias}. 
    \begin{itemize}
        \item 
        If \(\Var(f)=0\), then \(f\) is constant under the design, so \(|f(\cS_1)|=|\bbE f|\). 
        In this degenerate case we interpret \(\Bstar\) as \(|\bbE f|\). 
        
        \item 
        Otherwise, if $\Var(f)>0$, then Proposition~\ref{prop:stein-bias} yields
        \begin{equation}\label{eqn:thm_bias}
            |\bbE f(\cS_1)| \leq \Bstar =
            \frac{\sqrt{\bbE\big(Lf+\lambda^\star f\big)^2}}{\lambda^\star},
            \qquad
            \lambda^\star=\frac{\bbE\Gamma(f)}{\Var(f)}.
        \end{equation}
    \end{itemize}
    
    Combining the two displays \eqref{eqn:thm_concentration} and \eqref{eqn:thm_bias} yields \eqref{eqn:main_oracle}. 
    Note the bias bound is deterministic (no probability), so the overall success probability remains $1-\delta$ from \eqref{eqn:thm_concentration}.
\end{proof}

\subsection{Proof of Proposition \ref{prop:sensitivity_decomposition}}\label{sec:proof_proposition_decomposition}

\begin{proof}[Proof of Proposition~\ref{prop:sensitivity_decomposition}]
    Fix $\cS_1\in\binom{[n]}{n_1}$ and $(i,j)\in \cS_1\times \cS_0$. 
    Let $\cS_1'=\cS_1^{(i\leftrightarrow j)}=(\cS_1\setminus\{i\})\cup\{j\}$ and $\cS_0'=\cS_0^{(j\leftrightarrow i)}=(\cS_0\setminus\{j\})\cup\{i\}$ denote the swapped treatment and control sets, respectively. 
    By definition of the regression-adjusted estimator,
    \begin{align*}
        \Delta_{ij}\,\htauols(\cS_1)
            &= \htauols(\cS_1')-\htauols(\cS_1)\\
            &= \Bigl(\, \hat\mu_1(\cS_1')-\hat\mu_1(\cS_1) \, \Bigr) - \Bigl(\, \hat\mu_0(\cS_0')-\hat\mu_0(\cS_0) \, \Bigr).
    \end{align*}
    Insert the intermediate sets $\cS_1\setminus\{i\}$ and $\cS_0\setminus\{j\}$ and telescope:
    \[
        \hat\mu_1(\cS_1')-\hat\mu_1(\cS_1)
            = \underbrace{\big(\hat\mu_1(\cS_1\setminus\{i\})-\hat\mu_1(\cS_1)\big)}_{\Delta^{\mathrm{del}}_{1}(i;\,\cS_1)}
                + 
            \underbrace{\big(\hat\mu_1\big((\cS_1\setminus\{i\})\cup\{j\}\big)-\hat\mu_1(\cS_1\setminus\{i\})\big)}_{\Delta^{\mathrm{ins}}_{1}(j;\,\cS_1\setminus\{i\})},
    \]
    and likewise,
    \[
        \hat\mu_0(\cS_0')-\hat\mu_0(\cS_0)
            = \underbrace{\big(\hat\mu_0(\cS_0\setminus\{j\})-\hat\mu_0(\cS_0)\big)}_{\Delta^{\mathrm{del}}_{0}(j;\,\cS_0)}
                + 
            \underbrace{\big(\hat\mu_0\big((\cS_0\setminus\{j\})\cup\{i\}\big)-\hat\mu_0(\cS_0\setminus\{j\})\big)}_{\Delta^{\mathrm{ins}}_{0}(i;\,\cS_0\setminus\{j\})}.
    \]
    Substituting these identities into the expression for $\Delta_{ij}\,\htauols(\cS_1)$ yields
    \[
        \Delta_{ij}\,\htauols(\cS_1)
            =\Big[\Delta^{\mathrm{del}}_{1}(i;\,\cS_1) + \Delta^{\mathrm{ins}}_{1}(j;\,\cS_1\setminus\{i\})\Big]
                -\Big[\Delta^{\mathrm{del}}_{0}(j;\,\cS_0) + \Delta^{\mathrm{ins}}_{0}(i;\,\cS_0\setminus\{j\})\Big],
    \]
    which is the equation display \eqref{eq:exact-swap-coupled}. 
\end{proof}

\subsection{Proof of Theorem \ref{thm:perturbation_identity}}\label{sec:proof_theorem_perturbation}

\subsubsection{Background: Column Append and Deletion Identities for Pseudoinverse}\label{sec:LA_primer}

Here we record the standard rank-one column‑append/deletion formulas of \citet{greville1960some}.  
We refer interested readers to a textbook by \citet[Chapters 5 and 7]{ben2003generalized} for more background and detailed expositions. 
The statements below are quoted with minor rephrasing for clarity.

\begin{lemma}[Column append identity; {\citet[Section 4]{greville1960some}, summarized and rephrased}]\label{lem:greville}
    Let $A \in \RR^{n \times m}$ and $a \in \RR^n$. 
    Set $d \coloneqq A^{\dagger} a$ and $c = a - A d = (I_n - A A^{\dagger}) a$. 
    Then
    \begin{equation}\label{eqn:column_append}
        \begin{bmatrix} A & a \end{bmatrix}^{\dagger} 
            = \begin{bmatrix} A^{\dagger} - db \\ b \end{bmatrix}
        \qquad\text{where}\qquad
        b = 
        \begin{cases}
            c^{\dagger},                                     & \text{if } c \neq 0,\\
            (1 + d^{\top} d)^{-1} d^{\top} A^{\dagger},      & \text{if } c = 0,
        \end{cases}
    \end{equation}
    where for a nonzero column vector $v$, $v^{\dagger}\equiv v^\top/(v^\top v)$ denotes its Moore--Penrose pseudoinverse.
\end{lemma}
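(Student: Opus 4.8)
The plan is to verify directly that the proposed matrix $B \coloneqq \begin{bmatrix} A^\dagger - db \\ b \end{bmatrix}$ satisfies the four defining Moore--Penrose conditions for $M \coloneqq \begin{bmatrix} A & a \end{bmatrix}$, namely (1) $MBM = M$, (2) $BMB = B$, (3) $(MB)^\top = MB$, and (4) $(BM)^\top = BM$; by uniqueness of the pseudoinverse this identifies $B = M^\dagger$. Throughout I would use the standard identities $AA^\dagger A = A$ and $A^\dagger A A^\dagger = A^\dagger$, the symmetry of the two projectors $AA^\dagger$ and $A^\dagger A$, and the orthogonal decomposition $a = Ad + c$ with $d = A^\dagger a$ and $c = (I_n - AA^\dagger)a$. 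The argument naturally splits into the two cases of the statement.

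In the case $c \neq 0$, I would first record the geometric facts that drive everything: $c^\top A = a^\top(I_n - AA^\dagger)A = 0$ (so $c \perp \colsp(A)$), whence $bA = c^\dagger A = 0$; and $c^\top a = c^\top c$ because $I_n - AA^\dagger$ is a symmetric idempotent, whence $ba = c^\dagger a = 1$. Substituting $a = Ad + c$ then collapses $MB$ to $AA^\dagger + cc^\dagger$, the sum of two orthogonal projectors onto the complementary subspaces $\colsp(A)$ and $\vspan(c)$, i.e.\ the orthogonal projector onto $\colsp(M)$; symmetry (condition 3) and $MBM = M$ (condition 1) are then immediate. A short block computation using $bA = 0$, $ba = 1$, and $(A^\dagger - db)a = d - d = 0$ gives $BM = \begin{bmatrix} A^\dagger A & 0 \\ 0 & 1 \end{bmatrix}$, whose symmetry (condition 4) is clear and for which $BMB = B$ (condition 2) follows from $A^\dagger A A^\dagger = A^\dagger$.

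The case $c = 0$ is where I expect the real work. Here $a = Ad \in \colsp(A)$ and the formula for $b$ changes to $b = \gamma^{-1} d^\top A^\dagger$ with $\gamma \coloneqq 1 + d^\top d > 0$. The two facts I would isolate are: (i) $d = A^\dagger a \in \rowsp(A)$, so the projector $A^\dagger A$ fixes $d$, giving $A^\dagger A d = d$ and $d^\top A^\dagger A = d^\top$; and (ii) $d^\top A^\dagger = \gamma b$, which is the definition of $b$ rearranged. Using $a = Ad$ one checks $MB = AA^\dagger$ (symmetric, and $MBM = M$ follows), and a careful block expansion yields the symmetric matrix $BM = \begin{bmatrix} A^\dagger A - \gamma^{-1} d d^\top & \gamma^{-1} d \\ \gamma^{-1} d^\top & \gamma^{-1} d^\top d \end{bmatrix}$. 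The delicate step is condition 2, $BMB = B$: expanding the top block produces $A^\dagger - 2db + \gamma^{-1}(d^\top d + 1)db$, which collapses to $A^\dagger - db$ precisely because $d^\top d + 1 = \gamma$, while the bottom block reduces to $b$ after the $\pm\gamma^{-1}(d^\top d)b$ terms cancel. This bookkeeping with the scalar $\gamma$ and the substitution $d^\top A^\dagger = \gamma b$ is the main obstacle, since a naive expansion leaves several superficially distinct terms that only cancel once these identities are invoked.

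Finally, having verified all four Moore--Penrose axioms in both cases, uniqueness of the pseudoinverse yields $\begin{bmatrix} A & a \end{bmatrix}^\dagger = B$, which is exactly \eqref{eqn:column_append}. (Alternatively one could cite \citet{greville1960some} directly, but the axiom-checking route is self-contained and keeps the case distinction $c \neq 0$ versus $c = 0$ fully transparent.)
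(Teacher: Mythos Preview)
Your proposal is correct and self-contained: the four Moore--Penrose axioms do collapse exactly as you describe in both cases, with the key identities $bA=0$, $ba=1$ (when $c\neq 0$) and $A^\dagger A d=d$, $d^\top A^\dagger=\gamma b$ (when $c=0$) doing all the work. The paper takes a different route: it does not verify the axioms at all but simply cites \citet[Section~4]{greville1960some}, noting that the formula follows by combining specific equations from that source. Your approach buys complete self-containment and makes the case split fully transparent without requiring the reader to consult Greville's paper; the paper's approach is much shorter and appropriate given that this is a classical result being quoted as background for the rank-one update machinery rather than a new contribution. Either is fine here, and you even flag the citation alternative yourself at the end.
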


\begin{proof}[Proof of Lemma \ref{lem:greville}]
    This statement is a summary of \citet[Section 4]{greville1960some} as a lemma, with minor rephrasing. 
    The recursive expression in \eqref{eqn:column_append} is obtained by combining Eqs. (8), (9), (11), and (16) or (22), depending on whether $c = 0$ or not.
\end{proof}

The next column deletion identity, while not explicitly stated in \citet{greville1960some}, is an immediate algebraic reverse of Lemma \ref{lem:greville}.

\begin{lemma}[Column deletion identity]\label{lem:greville_delete}
    Let $\widetilde A \coloneqq \begin{bmatrix} A & a \end{bmatrix} \in \RR^{n \times (m+1)}$ and partition $\widetilde A^{\dagger} = \begin{bmatrix} B \\ b \end{bmatrix}$ with $B \in \RR^{m \times n}$ and $b \in \RR^{1 \times n}$. 
    Then
    \begin{equation}\label{eqn:column_delete}
        A^{\dagger} = B + d b
            \qquad\text{where}\qquad
            d =
            \begin{cases}
                B \left(a - b^{\dagger}\right), & \text{if } b A = 0,\\
                (1 - ba)^{-1}\,  B a, & \text{if } b A \neq 0.
            \end{cases}
    \end{equation}
\end{lemma}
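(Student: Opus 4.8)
The plan is to read the identity off the column-append formula (Lemma~\ref{lem:greville}) run in reverse, using uniqueness of the Moore--Penrose pseudoinverse. Applying Lemma~\ref{lem:greville} to $\widetilde A=[A\mid a]$ with $d_0\coloneqq A^\dagger a$ and $c\coloneqq (I_n-AA^\dagger)a$ gives
\[
    \widetilde A^\dagger=\begin{bmatrix} A^\dagger-d_0 b_0\\ b_0\end{bmatrix},
\]
where $b_0$ is Greville's bottom row. Since the pseudoinverse is unique, comparing with the hypothesized partition $\widetilde A^\dagger=\begin{bmatrix} B\\ b\end{bmatrix}$ forces $b=b_0$ and $B=A^\dagger-d_0 b$, i.e.\ $A^\dagger=B+d_0 b$. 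Thus the entire lemma reduces to showing that, in each branch, the stated formula for $d$ equals $d_0=A^\dagger a$; the claimed identity $A^\dagger=B+db$ then follows at once.

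Next I would show that the observable dichotomy $bA=0$ versus $bA\neq 0$ matches Greville's internal split $c\neq 0$ versus $c=0$. When $c\neq 0$ one has $b=c^\dagger=c^\top/(c^\top c)$, and since $c$ is the residual of projecting $a$ onto $\colsp(A)$ it satisfies $c^\top A=0$, whence $bA=0$. When $c=0$ (so $a\in\colsp(A)$) and $a\neq 0$ one has $b=(1+d_0^\top d_0)^{-1}d_0^\top A^\dagger$, and using $A^\dagger A d_0=d_0$ (because $d_0\in\rowsp(A)$) gives $bA=(1+d_0^\top d_0)^{-1}d_0^\top\neq 0$. The only overlap is the trivial $a=0$, which lands under $bA=0$ with $b=0$, so that $db=0$ and $A^\dagger=B$ regardless of $d$.

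For the branch $bA=0$ I would first compute $b^\dagger$ for the row vector $b=c^\top/(c^\top c)$, obtaining $b^\dagger=c=a-Ad_0$, so $a-b^\dagger=Ad_0$. Combined with $BA=A^\dagger A-d_0(bA)=A^\dagger A$, this yields $B(a-b^\dagger)=A^\dagger A\,d_0=A^\dagger A A^\dagger a=A^\dagger a=d_0$, using $A^\dagger A A^\dagger=A^\dagger$. For the branch $bA\neq 0$ I would substitute $A^\dagger a=d_0$ to get $ba=(1+d_0^\top d_0)^{-1}d_0^\top d_0$, whence $(1-ba)^{-1}=1+d_0^\top d_0$, and $Ba=(A^\dagger-d_0 b)a=d_0(1-ba)=d_0/(1+d_0^\top d_0)$; multiplying gives $(1-ba)^{-1}Ba=d_0$. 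In both branches $d=d_0$, which completes the argument.

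The main obstacle is bookkeeping rather than conceptual: one must correctly align the externally stated case split on $bA$ (the quantity an implementation can actually test) with the internal case split on $c$ hidden inside Greville's formula, and carry the row-vector pseudoinverse $b^\dagger$ through the algebra without transpose or normalization slips. Care is also needed at the degenerate corner $a=0$, where $b=0$ renders $d$ irrelevant; I would flag this explicitly so the stated formula remains valid there as well (indeed $d=B(a-b^\dagger)=0=d_0$).
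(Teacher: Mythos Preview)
Your proposal is correct and follows essentially the same route as the paper: invert Greville's column-append identity to obtain $B=A^\dagger-d_0b$ with $d_0=A^\dagger a$, then verify case-by-case that the stated $d$ equals $d_0$. Your treatment is, if anything, slightly more thorough---you explicitly verify the correspondence between the observable split $bA=0$/$bA\neq0$ and the internal split $c\neq0$/$c=0$, and you handle the degenerate corner $a=0$ separately, whereas the paper simply asserts the correspondence.
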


\begin{proof}[Proof of Lemma \ref{lem:greville_delete}]
    By Lemma \ref{lem:greville}, writing $d \coloneqq A^{\dagger} a$ and $c \coloneqq (I_n - A A^{\dagger}) a$, we have
    \[
        \widetilde A^{\dagger} 
        = \begin{bmatrix} A^{\dagger} - d\, b \\ b \end{bmatrix},
        \qquad\text{with}\quad
        b =
        \begin{cases}
            c^{\dagger}, & c \neq 0,\\
            (1 + d^{\top} d)^{-1} d^{\top} A^{\dagger}, & c = 0.
        \end{cases}
    \]
    Hence $B = A^{\dagger} - d\, b$, i.e., $A^{\dagger} = B + d\, b$. 
    It remains to express $d = A^{\dagger} a$ in terms of $B$, $a$, and $b$.

    \begin{itemize}
        \item
        \emph{Case 1: $bA=0$.} 
        This corresponds to the $c\neq 0$ branch in Lemma~\ref{lem:greville}, where $b=c^{\dagger}$ and thus $c=b^{\dagger}$.
        Therefore, $bA = c^{\dagger} A = 0$. 
        Using $a = A d + c$,
        \begin{align*}
            d 
                &= A^{\dagger} a
                = A^{\dagger} A d 
                = A^{\dagger} ( a - c )\\
                &\stackrel{(*)}{=} (A^{\dagger} - d b)\,(a - c)
                = B\,(a - c)\\
                &= B\,(a - b^{\dagger}).
        \end{align*}
        Here, (*) follows from the observation that since $a = Ad + c$ and $b= c^{\dagger}$, $ba = b(Ad+c) = 0+ bc = 1$, and therefore, $b(a-c) = ba - bc = 1 - 1 = 0$.

        \item 
        \emph{Case 2: $bA \neq 0$.} Multiplying $B = A^{\dagger} - d b$ by $a$ from the right:
        \[
            B a = A^{\dagger} a - d\, b a = d\,(1 - b a),
        \]
        hence $d = (1 - b a)^{-1} B a$. 
        Here, $c=0$, and Lemma \ref{lem:greville} yields $b = (1 + d^{\top} d)^{-1} d^{\top} A^{\dagger}$; so $b a = (1 + d^{\top} d)^{-1} d^{\top} d \in [0,1)$ and thus, $1 - b a = (1 + d^{\top} d)^{-1} > 0$ and the inverse $(1 - b a)^{-1}$ is well defined.        
    \end{itemize}
    
    Substituting the corresponding expression for $d$ into $A^{\dagger} = B + d b$ yields \eqref{eqn:column_delete}. 
    We note that the identities invoked are exactly those consolidated in Lemma \ref{lem:greville} from \citet[Section~4, Eqs.~(8), (9), (11), (16), (22)]{greville1960some}.
\end{proof}

Row insertion/deletion counterparts follow by transposition. 
In Section \ref{sec:lemma_deletion_insertion}, we apply these row-wise versions with $A = \Xa^{\top}$ and $a = x_i^{\top}$ (or $a = x_j^{\top}$) when deleting (or appending) a row (=unit) in $\Xa$.

\subsubsection{Row Deletion and Row Insertion Lemmas}\label{sec:lemma_deletion_insertion}

Throughout this section, we use the $Q$‑formulation of the OLS intercept $\hat{\mu}$ from Proposition~\ref{prop:mu_ratio}.
For $X \in \RR^{n \times p}$ and $y \in \RR^n$, the minimum‑norm OLS intercept is
\[
    \hat{\mu} = \frac{ \bone_{n}^{\top} Q_{X}\, y }{ \bone_{n}^{\top} Q_{X}\, \bone_{n} },
        \qquad\text{where}\qquad
        Q_X = 
            \begin{cases}
                M_X \coloneqq I_n - X X^{\dagger},  & \text{if } \bone_n \not\in \colsp(X),\\
                K_X \coloneqq (X X^{\top})^{\dagger},  & \text{if } \bone_n \in \colsp(X).
            \end{cases}
\]
Here, the denominator $\bone^\top Q_X \bone>0$ in both cases.

\paragraph{Notation.} 
For a symmetric positive semidefinite matrix $Q$, we write $\langle u,v\rangle_Q \coloneqq u^\top Q v$ and $\|u\|_Q \coloneqq \sqrt{u^\top Q u}$. 
Given $\cS \subset [n]$ and arm $\arm\in\{0,1\}$, we abbreviate
\[
    Q_\cS \coloneqq Q_{X_\cS},
    \qquad
    \hat\mu_\arm(\cS) \coloneqq \frac{\langle \bone,\; y^{(\arm)}_\cS \rangle_{Q_\cS}}{\|\bone\|_{Q_\cS}^2},
    \qquad
    \ycenta{\cS} \coloneqq y^{(\arm)}_\cS-\hat\mu_\arm(\cS)\,\cdot\, \bone,
\]
so that $\langle \bone,\,\ycenta{\cS}\rangle_{Q_\cS}=0$. 
When the dimension is clear from context, we write $\bone=\bone_n$ (or $\bone_{|\cS|}$) and drop the subscript.

\paragraph{Rank-$1$ Ratio Update for Bookkeeping.} 
We isolate a purely algebraic identity for how the intercept ratio changes under a rank–one perturbation of $Q$.
This will be applied with the row-deletion/row-insertion updates.

\begin{lemma}\label{lem:rank1_update}
    For any symmetric matrix $Q$ and vector $y$, define
    \[
        \mu_y(Q) \coloneqq 
            \begin{cases}
                \frac{\bone^\top Q y}{\bone^\top Q \bone},  & \text{if }\bone^\top Q \bone > 0,\\
                0,  &\text{if }\bone^\top Q \bone = 0.
            \end{cases},
        \qquad\text{and}\qquad
        \ycent{Q} \coloneqq y - \mu_y(Q) \cdot \bone.
    \]
    For any scalar $\kappa$ and vector $v$, let $Q'= Q + \kappa (Q v) (Q v)^{\top}$. 
    Then
    \begin{equation}\label{eq:ratio-rank1}
        \mu_y(Q')-\mu_y(Q)
            = \frac{\ \kappa \langle \bone,v\rangle_{Q}\ \langle v,\, \ycent{Q}\rangle_{Q}\ }{\ \bone^\top Q' \bone\ }.
    \end{equation}
\end{lemma}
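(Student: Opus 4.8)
The plan is to regard $Q'=Q+\kappa\,ww^\top$ with $w\coloneqq Qv$ as a rank-one perturbation of $Q$ and to compute directly the numerator and denominator defining $\mu_y(Q')$. First I would record two bookkeeping identities, which follow from $w=Qv$ and the symmetry of $Q$:
\[
    \bone^\top w=\bone^\top Q v=\langle \bone,v\rangle_Q,
    \qquad
    w^\top y=v^\top Q y=\langle v,y\rangle_Q.
\]
Expanding the rank-one update then gives
\[
    \bone^\top Q'\bone=\bone^\top Q\bone+\kappa\,\langle\bone,v\rangle_Q^2,
    \qquad
    \bone^\top Q'y=\bone^\top Q y+\kappa\,\langle\bone,v\rangle_Q\,\langle v,y\rangle_Q,
\]
so that $\mu_y(Q')=\bigl(\bone^\top Q y+\kappa\,\langle\bone,v\rangle_Q\langle v,y\rangle_Q\bigr)/\bone^\top Q'\bone$ whenever the denominator is nonzero.

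Second, I would abbreviate $a\coloneqq\bone^\top Q\bone$, $b\coloneqq\bone^\top Q y$, $s\coloneqq\langle\bone,v\rangle_Q$, $c\coloneqq\langle v,y\rangle_Q$, and $a'\coloneqq\bone^\top Q'\bone=a+\kappa s^2$, and form the difference over the common denominator $a\,a'$:
\[
    \mu_y(Q')-\mu_y(Q)=\frac{b+\kappa s c}{a'}-\frac{b}{a}
        =\frac{a(b+\kappa s c)-(a+\kappa s^2)b}{a\,a'}.
\]
The $ab$ terms cancel and the numerator collapses to $\kappa s(ac-sb)$. Factoring out $\kappa s=\kappa\langle\bone,v\rangle_Q$ leaves the bracket $c-\tfrac{s b}{a}=\langle v,y\rangle_Q-\mu_y(Q)\,\langle\bone,v\rangle_Q$, which is precisely $\langle v,\ycent{Q}\rangle_Q$ by the definition $\ycent{Q}=y-\mu_y(Q)\bone$ together with symmetry of $Q$. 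This produces exactly \eqref{eq:ratio-rank1}.

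The only genuinely delicate point---and the step I would treat most carefully---is the degenerate branch $\bone^\top Q\bone=0$, where $\mu_y(Q)\coloneqq 0$ by convention and the common-denominator manipulation above is not directly available. For the positive semidefinite $Q$ arising in the applications, $\bone^\top Q\bone=0$ forces $Q\bone=0$, whence $s=b=0$ and $a'=0$; both sides then reduce to the same $0/0$ convention, so the identity is read under the standing assumption $\bone^\top Q'\bone>0$. I would therefore state this nondegeneracy assumption explicitly---noting that it holds automatically in the row-deletion/row-insertion applications, since $\bone^\top Q_{\cS}\bone>0$ by the remark following Proposition~\ref{prop:mu_ratio}---and otherwise the result is the one-line algebraic identity above. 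No properties beyond symmetry of $Q$ and the definition of $\mu_y$ are needed, so there is no real analytic obstacle, only the careful accounting of the centering cancellation and the degenerate convention.
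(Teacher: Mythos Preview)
Your proposal is correct and follows essentially the same approach as the paper's own proof: both expand $\bone^\top Q'y$ and $\bone^\top Q'\bone$ via the rank-one update, form the difference of ratios, and recognize the centering cancellation $\langle v,y\rangle_Q-\mu_y(Q)\langle v,\bone\rangle_Q=\langle v,\ycent{Q}\rangle_Q$. Your treatment is in fact slightly more careful than the paper's, which does not explicitly address the degenerate branch $\bone^\top Q\bone=0$.
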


\begin{proof}[Proof of Lemma \ref{lem:rank1_update}]
    Observe that
    \[
        \bone^\top Q' y
            = \bone^\top Q y + \kappa \langle \bone,v\rangle_{Q}\,\langle v,y\rangle_{Q},
        \qquad\text{and}\qquad
        \bone^\top Q' \bone  
            = \bone^\top Q \bone + \kappa \langle \bone,v\rangle_{Q}^2.
    \]
    Therefore,
    \begin{align*}
        \mu_y(Q')-\mu_y(Q)
            &= \frac{\bone^\top Q y + \kappa \langle \bone, v\rangle_{Q}\langle v,y\rangle_{Q}}{\bone^\top Q \bone + \kappa \langle \bone,v\rangle_{Q}^{2}} - \frac{\bone^\top Q y}{\bone^\top Q \bone} \\
            &= \frac{\ \kappa \langle \bone,v\rangle_{Q}\ \bigl(\langle v,y\rangle_{Q} - \mu_y(Q) \langle v,\bone\rangle_{Q}\bigr)\ }{\ \bone^\top Q' \bone\ } \\
            &= \frac{\ \kappa \langle \bone,v\rangle_{Q}\ \bigl\langle v, y- \mu_y(Q) \cdot \bone\bigr\rangle_{Q}\ }{\ \bone^\top Q' \bone\ } \\
            &= \frac{\ \kappa \langle \bone,v\rangle_{Q}\ \langle v,\, \ycent{Q} \rangle_{Q}\ }{\ \bone^\top Q' \bone\ }. 
    \end{align*}
\end{proof}

\begin{remark}[Useful variant without $Q$-geometry]
    The proof of Lemma \ref{lem:rank1_update} goes through verbatim if $Q'$ is updated by a general rank‑one form $Q' = Q + \kappa\,rr^\top$ (not necessarily $r=Qv$), but with $\langle \bone,v\rangle_Q$ and $\langle v,\cdot\rangle_Q$ replaced by $\,\bone^\top r$ and $r^\top(\cdot)\,$.
\end{remark}


\paragraph{Row Deletion: Exact Identity.} 
We apply Lemma~\ref{lem:rank1_update} to the Greville deletion (Lemma \ref{lem:greville_delete} after transpose) to obtain a closed form for the atomic deletion change of the intercept. 

Given $\cS$ with $s = |\cS|$, let $e_i\in\RR^s$ denote the coordinate vector of unit $i$ within $\cS$ (i.e., the indicator of $i$’s position in $\cS$). 
For $i \in \cS$, define
\begin{equation}\label{eqn:deletion_denominator}
    \Phidel_i(\cS) \coloneqq \|\bone\|_{Q_\cS}^2\,\|e_i\|_{Q_\cS}^2-\langle e_i,\bone\rangle_{Q_\cS}^2 \geq 0.
\end{equation}

\begin{lemma}[Row deletion identity]\label{lem:deletion}
    Let \(X\in\RR^{n\times p}\), \(y^{(1)},y^{(0)}\in\RR^n\), \(\arm\in\{0,1\}\), \(\cS\in\binom{[n]}{m}\), and \(i\in\cS\). 
    Let \(R_i\in\{0,1\}^{(m-1)\times m}\) be the row-deletion matrix that removes the coordinate of \(i\) from vectors indexed by \(\cS\). 
    Let \(Q_\cS=Q_{X_\cS}\), \(\hat\mu_\arm=\hat\mu_\arm(\cS)\), and
    \[
        \ycenta{\cS}=y_\cS^{(\arm)}-\hat\mu_\arm\bone .
    \]
    Then the deletion change satisfies the exact ratio identity
    \begin{equation}\label{eq:exact-del-ratio}
        \Delta^{\mathrm{del}}_\arm(i;\cS)
        =
        \frac{
            \bone^\top Q_{X_{\cS\setminus\{i\}}}
            R_i\ycenta{\cS}
        }{
            \bone^\top Q_{X_{\cS\setminus\{i\}}}\bone
        }.
    \end{equation}
    Moreover, in the regular deletion case where \(q_i\coloneqq e_i^\top Q_\cS e_i>0\) and the zero-padded update satisfies
    \[
        R_i^\top Q_{X_{\cS\setminus\{i\}}}R_i
        =
        Q_\cS-\frac{Q_\cS e_ie_i^\top Q_\cS}{q_i},
    \]
    if \(\Phidel_i(\cS)>0\), then
    \begin{equation}\label{eq:exact-del-base}
        \Delta^{\mathrm{del}}_\arm(i;\cS)
        =
        -\frac{
            \langle e_i,\bone\rangle_{Q_\cS}
            \langle e_i,\ycenta{\cS}\rangle_{Q_\cS}
        }{
            \Phidel_i(\cS)
        }.
    \end{equation}
\end{lemma}

\begin{proof}[Proof of Lemma~\ref{lem:deletion}]
    The identity \eqref{eq:exact-del-ratio} follows directly from the definition:
    \[
        \hat\mu_\arm(\cS\setminus\{i\})
        =
        \frac{
            \bone^\top Q_{X_{\cS\setminus\{i\}}}y_{\cS\setminus\{i\}}^{(\arm)}
        }{
            \bone^\top Q_{X_{\cS\setminus\{i\}}}\bone
        }.
    \]
    Subtracting \(\hat\mu_\arm(\cS)\) from both sides gives \eqref{eq:exact-del-ratio}.

    Next, we prove the regular closed form \eqref{eq:exact-del-base}. 
    Let \(Q=Q_\cS\), \(q_i=e_i^\top Qe_i\), \(a_i=e_i^\top Q\bone\), and \(b_i=e_i^\top Q\ycenta{\cS}\). 
    By the assumed regular deletion update,
    \[
        Q_-^\uparrow
        :=
        R_i^\top Q_{X_{\cS\setminus\{i\}}}R_i
        =
        Q-\frac{Qe_ie_i^\top Q}{q_i}.
    \]
    Since \(\bone^\top Q\ycenta{\cS}=0\), the numerator in \eqref{eq:exact-del-ratio}, written in the original \(s\)-dimensional coordinates, equals
    \[
        \bone^\top Q_-^\uparrow \ycenta{\cS}
        =
        -\frac{a_i b_i}{q_i}.
    \]
    The denominator equals
    \[
        \bone^\top Q_-^\uparrow \bone
        =
        \bone^\top Q\bone-\frac{a_i^2}{q_i}
        =
        \frac{
            \|\bone\|_{Q}^2\|e_i\|_{Q}^2-\langle e_i,\bone\rangle_Q^2
        }{
            q_i
        }
        =
        \frac{\Phidel_i(\cS)}{q_i}.
    \]
    Dividing the two displays yields \eqref{eq:exact-del-base}, which completes the proof.
\end{proof}

\begin{remark}[Explicit appearance of $y^{(\arm)}_i$]
    Intuitively, we expect $\Delta^{\mathrm{del}}_\arm(i; \cS)$ to depend on the covariate $x_i$ and the outcome $y^{(\arm)}_i$ at the deleted unit $i \in \cS$. 
    The factor 
    \[
        \langle e_i,\;y^{(\arm),c}_\cS\rangle_{Q_\cS} 
            = e_i^\top Q_\cS\big(y^{(\arm)}_\cS-\hat\mu_\arm(\cS)\bone\big)
            = (Q_\cS y^{(\arm)}_\cS)_i-\hat\mu_\arm(\cS)\,(Q_\cS\bone)_i
    \]
    enters \eqref{eq:exact-del-base} and depends \emph{explicitly} on $y^{(\arm)}_i$ (together with the neighboring entries weighted by $Q_\cS$). 
    Thus the deletion sensitivity reflects both the covariate geometry (via $Q_\cS$ and $\langle e_i,\bone\rangle_{Q_\cS}$) and the outcome at unit $i$.
\end{remark}

\paragraph{Row Insertion: Exact Identity.}
Next, we combine the Greville column-append identity (Lemma \ref{lem:greville}, transposed to rows) with the bookkeeping lemma (Lemma \ref{lem:rank1_update}) to derive the insertion change.
This makes the dependence on $(x_j, y^{(\arm)}_j)$ explicit and holds uniformly across the two $Q$-branches in Proposition~\ref{prop:mu_ratio}. 

\begin{lemma}[Row insertion identity]\label{lem:insertion}
    Let \(X\in\RR^{n\times p}\), \(y^{(1)},y^{(0)}\in\RR^n\), \(\cS\in\binom{[n]}{m}\), \(\arm\in\{0,1\}\), and \(j\notin\cS\). 
    Let \(\cS^+=\cS\cup\{j\}\), and order \(\cS^+\) so that \(j\) is the last row. Partition
    \[
        Q_{X_{\cS^+}}
        =
        \begin{bmatrix}
            Q_{00}^{+} & q_{0j}^{+}\\
            (q_{0j}^{+})^\top & q_{jj}^{+}
        \end{bmatrix}.
    \]
    With
    \[
        r_j^{(\arm)}\coloneqq y_j^{(\arm)}-\hat\mu_\arm(\cS),
        \qquad
        \ycenta{\cS}=y_\cS^{(\arm)}-\hat\mu_\arm(\cS)\bone,
    \]
    the atomic insertion change satisfies the exact block-ratio formula
    \begin{equation}\label{eq:exact-ins-block}
        \Delta^{\mathrm{ins}}_\arm(j;\cS)
        =
        \frac{
            \bone^\top Q_{00}^{+}\ycenta{\cS}
            +(q_{0j}^{+})^\top\ycenta{\cS}
            +\bigl(\bone^\top q_{0j}^{+}+q_{jj}^{+}\bigr)r_j^{(\arm)}
        }{
            \bone^\top Q_{00}^{+}\bone
            +2\,\bone^\top q_{0j}^{+}
            +q_{jj}^{+}
        }.
    \end{equation}

    In addition, the following simplifications hold.

    \begin{enumerate}
        \item 
        (Regular \(K\)-branch case) Suppose \(Q_{X_\cS}=K_{X_\cS}\), and define
        \[
            \sigma_j^2(\cS)
            =
            \|x_j\|_2^2-\|X_\cS x_j\|_{Q_\cS}^2
            >0.
        \]
        If \(\sigma_j^2(\cS) > 0\), then
        \begin{equation}\label{eq:exact-ins-K}
            \Delta^{\mathrm{ins}}_\arm(j;\cS)
            =
            \frac{
                \bigl(1-\alpha_j(\cS)\bigr)
                \Bigl(
                y_j^{(\arm)}-\hat\mu_\arm(\cS)
                -
                \langle X_\cS x_j,\ycenta{\cS}\rangle_{Q_\cS}
                \Bigr)
            }{
                \|\bone\|_{Q_\cS}^2\sigma_j^2(\cS)
                +
                \bigl(1-\alpha_j(\cS)\bigr)^2
            },
        \end{equation}
        where \(\alpha_j(\cS)=\langle \bone,X_\cS x_j\rangle_{Q_\cS}\).

        \item 
        (Regular \(M\)-branch case) Suppose \(Q_{X_\cS}=M_{X_\cS}\), and let
        \[
            A_\cS:=X_\cS^\top,\qquad
            d_j:=A_\cS^\dagger x_j,\qquad
            c_j:=x_j-A_\cS d_j.
        \]
        \begin{enumerate}
            \item 
            If \(c_j\ne0\), then \(\Delta^{\mathrm{ins}}_\arm(j;\cS)=0\). 

            \item 
            If \(c_j=0\), then
            \begin{equation}\label{eq:exact-ins-M}
                \Delta^{\mathrm{ins}}_\arm(j;\cS)
                =
                \frac{
                    (1-\eta_j)
                    \bigl(
                    y_j^{(\arm)}-\hat\mu_\arm(\cS)-\beta_j
                    \bigr)
                }{
                    \|\bone\|_{Q_\cS}^2(1+h_j)
                    +
                    (1-\eta_j)^2
                },
            \end{equation}
            where
            \[
                h_j=\|d_j\|_2^2,\qquad
                \eta_j=\bone^\top d_j,\qquad
                \beta_j=d_j^\top\ycenta{\cS}.
            \]
        \end{enumerate}
    \end{enumerate}
\end{lemma}

\begin{proof}[Proof of Lemma~\ref{lem:insertion}]
    The block formula \eqref{eq:exact-ins-block} follows directly from the ratio representation: if we write \(Q^+=Q_{X_{\cS^+}}\) and \(y^+=(y_\cS^{(\arm)},y_j^{(\arm)})\), then
    \[
        \hat\mu_\arm(\cS^+)-\hat\mu_\arm(\cS)
        =
        \frac{[\bone;1]^\top Q^+\bigl[y_\cS^{(\arm)}-\hat\mu_\arm(\cS)\bone;\,
        y_j^{(\arm)}-\hat\mu_\arm(\cS)\bigr]}
        {[\bone;1]^\top Q^+[\bone;1]}.
    \]
    Expanding this expression under the displayed block partition of \(Q^+\) gives \eqref{eq:exact-ins-block}.

    Next, we derive the regular \(K\)-branch formula. 
    Let \(u_j=X_\cS x_j\), \(Q=K_{X_\cS}\), and \(\sigma^2=\|x_j\|_2^2-u_j^\top Q u_j>0\). 
    By the row-append Moore--Penrose update, equivalently the transposed \(c\neq0\) branch of Greville's identity in Lemma~\ref{lem:greville}, the appended Gram pseudoinverse satisfies
    \[
        Q_{X_{\cS^+}}
        =
        \begin{bmatrix}
            Q+\dfrac{Q u_j u_j^\top Q}{\sigma^2}
            &
            -\dfrac{Q u_j}{\sigma^2}\\
            -\dfrac{u_j^\top Q}{\sigma^2}
            &
            \dfrac{1}{\sigma^2}
        \end{bmatrix}.
    \]
    When \(X_\cS X_\cS^\top\) is invertible this reduces to the usual Schur-complement inverse update.
    Substituting this block matrix into \eqref{eq:exact-ins-block}, and using \(\bone^\top Q\ycenta{\cS}=0\), yields \eqref{eq:exact-ins-K}.

    Lastly, consider the regular \(M\)-branch. Let \(Q=M_{X_\cS}\), \(A_\cS=X_\cS^\top\), \(d_j=A_\cS^\dagger x_j\), and \(c_j=x_j-A_\cS d_j\). 
    Greville's row-append formula for the residual-maker matrix gives two cases. 
    If \(c_j\ne0\), then
    \[
        Q_{X_{\cS^+}}
        =
        \begin{bmatrix}
            Q&0\\
            0&0
        \end{bmatrix},
    \]
    so \eqref{eq:exact-ins-block} gives \(\Delta^{\mathrm{ins}}_\arm(j;\cS)=0\). 
    If \(c_j=0\), then
    \[
        Q_{X_{\cS^+}}
        =
        \begin{bmatrix}
            Q+\dfrac{d_jd_j^\top}{1+\|d_j\|_2^2}
            &
            -\dfrac{d_j}{1+\|d_j\|_2^2}\\
            -\dfrac{d_j^\top}{1+\|d_j\|_2^2}
            &
            \dfrac{1}{1+\|d_j\|_2^2}
        \end{bmatrix}.
    \]
    Substituting this expression into \eqref{eq:exact-ins-block} yields \eqref{eq:exact-ins-M}. 
    In particular, when \(X_\cS\) has no covariates, \(d_j=0\), and \eqref{eq:exact-ins-M} reduces to
    \[
        \Delta^{\mathrm{ins}}_\arm(j;\cS)
        =
        \frac{y_j^{(\arm)}-\hat\mu_\arm(\cS)}
        {|\cS|+1},
    \]
    as required for insertion into a sample mean.
\end{proof}

\subsubsection{Completing Proof of Theorem \ref{thm:perturbation_identity}}\label{sec:completing_proof_perturbation}

\begin{proof}[Proof of Theorem \ref{thm:perturbation_identity}]
    The regular deletion identity in Theorem~\ref{thm:perturbation_identity} is exactly \eqref{eq:exact-del-base} from Lemma~\ref{lem:deletion}. 
    The regular \(K\)-branch insertion identity is exactly \eqref{eq:exact-ins-K} from Lemma~\ref{lem:insertion}. 
    The full exact branch-specific identities are given by \eqref{eq:exact-del-ratio}, \eqref{eq:exact-ins-block}, \eqref{eq:exact-ins-K}, and \eqref{eq:exact-ins-M}. 
    Combining these atomic identities with the paired deletion--insertion telescope in Proposition~\ref{prop:sensitivity_decomposition} gives the oracle one-swap sensitivity of OLS--RA.
\end{proof}

\section{NUMERICAL EXPERIMENTS}\label{sec:deferred_experiments}

This appendix complements the theory by describing \emph{(i)} Monte Carlo (MC) approximation of the oracle quantities \((\Vstar,\Rstar,\Bstar)\) (Section~\ref{sec:experiments_monte_carlo}) and \emph{(ii)} small‑scale simulations that probe the sharpness of the finite‑sample CIs in Theorem~\ref{thm:oracle} across classical and over‑parameterized regimes (Section~\ref{sec:experiments_main}). 
The Python code used to generate the Monte Carlo results reported here is available at \url{https://github.com/dogyoons/regadj_finitesample}.

\subsection{Monte Carlo Approximation of Oracle Parameters}\label{sec:experiments_monte_carlo}

This section explains how the oracle quantities \((V^\star,R^\star,B^\star)\) entering Theorem~\ref{thm:oracle} are computed in the experiments. 
Throughout, we work in the \emph{oracle} setting, meaning that the full finite-population objects \((X,y^{(0)},y^{(1)})\) are treated as known. 
The data-driven proxies discussed in Appendices~\ref{sec:geometric_envelope}--\ref{sec:leverage_bounds} are not used here.

Given an ATE estimator \(\hat\tau\) and a treatment set \(\cS\), write \(f(\cS)=\hat\tau(\cS)-\tau\). 
The finite-sample radius in Theorem~\ref{thm:oracle} depends on three population quantities:
\[
    \Vstar=\sum_{t=1}^{n_1} v_t^\star,
    \qquad
    \Rstar=\max_t r_t^\star,
    \qquad
    \Bstar=\frac{\sqrt{\bbE(Lf+\lambda^\star f)^2}}{\lambda^\star},
\]
where \(v_t^\star=\Var(D_t\mid\cF_{t-1})\), \(r_t^\star=\sup|D_t|\), and \(D_t=M_t-M_{t-1}\) is the Doob-martingale increment along the assignment-exposure filtration. 
By Proposition~\ref{prop:reveal-swap}, these quantities are determined by one-swap sensitivities \(\Delta_{ij}f\) and their conditional averages.

For OLS--RA, exact evaluation of these expectations is computationally infeasible in general, because it would require summing over many treatment assignments, reveal states, and ordered swap pairs. 
Accordingly, in Experiment~2 we approximate \((V^\star,R^\star,B^\star)\) by Monte Carlo estimators \((\widehat V^\star,\widehat R^\star,\widehat B^\star)\). 
For DiM, by contrast, \(V^\star\) and \(R^\star\) admit closed forms under the remaining-pool reveal law, so we use these exact oracle quantities in Experiment~1. 
Throughout, OLS--RA swap effects are evaluated using the paired deletion--insertion identities from Proposition~\ref{prop:sensitivity_decomposition} and Theorem~\ref{thm:perturbation_identity}, so no OLS re-fitting is required.

\subsubsection{Monte Carlo Estimation of Concentration Parameters}\label{sec:MC_concentration}

\paragraph{Recollecting Setup and Notation.}
Fix a realized assignment \((\cS_1,\cS_0)\) and an independent reveal order \(\Pi\) of the treated units. 
Recall from \eqref{eqn:past_remaining_sets} that at step \(t\), we let
\[
    \cS^{\mathrm{past}}_{t-1}=\{\Pi(1),\ldots,\Pi(t-1)\},
    \qquad
    \cR_{t-1}=[n]\setminus \cS^{\mathrm{past}}_{t-1},
\]
and let \(\nass=n_1-(t-1)\). 
For a candidate unit \(i\in\cR_{t-1}\), the proxy treatment set is defined in \eqref{eqn:prox_set} as
\[
    \Sprox{t-1}(i,\cT)=\cS^{\mathrm{past}}_{t-1}\cup\{i\}\cup\cT,
    \qquad
    \cT\sim\Unif\binom{\cR_{t-1}\setminus\{i\}}{\nass-1}.
\]
In the implementation, \(\cI_t\subseteq \cR_{t-1}\) is either the full remaining pool or a uniform subsample of size \(B_i\). 
For each \(i\in\cI_t\), we draw \(B_{\mathrm{cond}}\) completion sets \(\cT_{b,i}\), and for each completion either enumerate all admissible controls or sample \(B_J\) admissible controls uniformly without replacement. 
Here \(B_i\), \(B_{\mathrm{cond}}\), and \(B_J\) are the MC budgets controlling the candidate set, completion draws, and control draws, respectively.

\paragraph{Stepwise Monte Carlo Estimate.}
For each \(i\in\cI_t\), define
\begin{equation}\label{eqn:zeta_hat_i}
    \widehat\zeta_t(i)
    =
    \frac{1}{B_{\mathrm{cond}}}
    \sum_{b=1}^{B_{\mathrm{cond}}}
    \left[
        \frac{1}{|\cJ_{b,i}|}
        \sum_{J\in\cJ_{b,i}}
        \Delta_{iJ} f\big(\Sprox{t-1}(i,\cT_{b,i})\big)
    \right],
\end{equation}
where \(\cJ_{b,i}\subseteq \cR_{t-1}\setminus(\{i\}\cup\cT_{b,i})\) is either the full admissible control set or a uniform subsample of size \(B_J\). 
We also define the within-\(i\) completion variance
\[
    \widehat s_t^2(i)
    :=
    \widehat{\Var}_{b=1,\ldots,B_{\mathrm{cond}}}
    \left(
        \frac{1}{|\cJ_{b,i}|}
        \sum_{J\in\cJ_{b,i}}
        \Delta_{iJ} f\big(\Sprox{t-1}(i,\cT_{b,i})\big)
    \right).
\]
The reported experiments use the direct plug-in estimators
\[
    \widehat{\Vstar}
    =
    \sum_{t=1}^{n_1}\alpha_t^2
    \left[
        \widehat{\Var}_{i\in\cI_t}\big(\widehat\zeta_t(i)\big)
        -
        \frac{1}{|\cI_t|}\sum_{i\in\cI_t}\frac{\widehat s_t^2(i)}{B_{\mathrm{cond}}}
    \right]_+,
    \qquad
    \widehat{\Rstar}
    =
    \max_{1\le t\le n_1}\alpha_t\max_{i\in\cI_t}|\widehat\zeta_t(i)|.
\]
Here, \([z]_+ = \max\{z, 0\}\). 
The optional one-sided upper-confidence bound correction for \(R^\star\) is implemented in the code but is not used in the reported tables.

\paragraph{Algorithmic Pseudocode.} 
The entire procedure is summarized in Algorithm \ref{alg:mc-varrange} in a pseudocode format.

\begin{algorithm}[t]
    \caption{\textsc{MCVarRange}$(X,y^{(0)},y^{(1)},\Sone; \Pi, B_i, B_{\mathrm{cond}}, B_J)$ --- compute \((\widehat V^\star,\widehat R^\star)\)}
    \label{alg:mc-varrange}
    \begin{algorithmic}[1]
        \Require $X$, $y^{(0)}$, $y^{(1)}$, realized $\Sone$; reveal permutation $\Pi$; budgets $B_i,B_{\mathrm{cond}},B_J$
        \Ensure $\widehat \Vstar$, $\widehat \Rstar$
        \State $\widehat{\Vstar} \gets 0$;\ $\widehat{\Rstar} \gets 0$
        \For{$t=1$ \textbf{to} $n_1$}
            \State $\cS^{\mathrm{past}}_{t-1}\gets\{\Pi(1),\ldots,\Pi(t-1)\}$;\quad 
                   $\cR_{t-1}\gets [n]\setminus\cS^{\mathrm{past}}_{t-1}$
            \If{$|\cR_{t-1}|\le B_i$}
                \State $\cI_t\gets \cR_{t-1}$
            \Else
                \State $\cI_t\gets $ a uniform subsample of size $B_i$ from $\cR_{t-1}$
            \EndIf
            \State Initialize $\widehat\zeta_t(i)\gets 0$ for all $i\in \cI_t$
            \For{$i \in \cI_t$}
              \For{$b=1$ \textbf{to} $B_{\mathrm{cond}}$}
                \State Draw $\cT_{b,i}\sim \Unif\binom{\cR_{t-1}\setminus\{i\}}{\nass-1}$
                \If{$B_J=0$ \textbf{or} $|\cR_{t-1}\setminus(\{i\}\cup\cT_{b,i})|\le B_J$}
                    \State \(\cJ_{b,i} \gets \cR_{t-1}\setminus(\{i\}\cup\cT_{b,i})\)
                \Else
                    \State \(\cJ_{b,i} \gets \) a uniform subsample of size $B_J$ from $\cR_{t-1}\setminus(\{i\}\cup\cT_{b,i})$
                \EndIf
                \State Compute $\Delta_{iJ} f\big(\Sprox{t-1}(i,\cT_{b,i})\big)$ for all \(J\in\cJ_{b,i}\)
                \State $\widehat\zeta_t(i)\gets \widehat\zeta_t(i) + \frac{1}{B_{\mathrm{cond}}}\cdot
                   \frac{1}{|\cJ_{b,i}|}\sum_{J\in \cJ_{b,i}}
                   \Delta_{iJ} f\big(\Sprox{t-1}(i,\cT_{b,i})\big)$  
              \EndFor
            \EndFor
            \State Compute \(\widehat s_t^2(i)\) from the \(B_{\mathrm{cond}}\) completion draws
            \State 
            $\widehat \Vstar\ \gets\ \widehat \Vstar
                + \alpha_t^2\left[
                \EmpVar\{\,\widehat\zeta_t(i): i\in \cI_t\,\}
                -
                \frac{1}{|\cI_t|}\sum_{i\in\cI_t}\frac{\widehat s_t^2(i)}{B_{\mathrm{cond}}}
                \right]_+$
            \State $\widehat \Rstar \gets \max\big(\widehat \Rstar,\ \alpha_t \max_{i\in\cI_t} |\widehat\zeta_t(i)|\big)$
        \EndFor
        \State \Return $(\widehat \Vstar,\ \widehat \Rstar)$
    \end{algorithmic}
\end{algorithm}

\subsubsection{Monte Carlo Estimation of Bias Parameter} \label{sec:MC_bias}
Choose budget parameters $B_S \in \ZZp$. 
\begin{enumerate}
    \item 
    Draw i.i.d. treatment sets $\cS_1^{(b)} \sim \Unif\binom{[n]}{n_1}$ for $b \in [B_S]$, and set $\cS_0^{(b)} = [n] \setminus \cS_1^{(b)}$.

    \item 
    For each $b \in [B_S]$, compute
    \begin{align}
        \widehat{\Gamma}_b
            &=
            \frac{1}{2n_1n_0}
            \sum_{i\in\cS_1^{(b)}}\sum_{j\in\cS_0^{(b)}}
            \bigl(\Delta_{ij}f(\cS_1^{(b)})\bigr)^2,
            \label{eqn:Gamma_hat_b}\\
        \widehat{Lf}_b
            &=
            \frac{1}{n_1n_0}
            \sum_{i\in\cS_1^{(b)}}\sum_{j\in\cS_0^{(b)}}
            \Delta_{ij}f(\cS_1^{(b)}),
            \label{eqn:Lf_hat_b}\\
        f_b
            &=
            f(\cS_1^{(b)})
            =
            \hat\tau(\cS_1^{(b)})-\tau .
            \notag
    \end{align}

    \item 
    Then set
    \[
        \widehat{\bbE\Gamma(f)}
            =
            \frac{1}{B_S}\sum_{b=1}^{B_S}\widehat{\Gamma}_b,
        \qquad
        \widehat{\Var}(f)
            =
            \EmpVar\bigl(\{f_b\}_{b\le B_S}\bigr),
    \]
    and
    \[
        \widehat{\lambda^\star}
            =
            \max\left\{
                \mathrm{gap}_{n,n_1},
                \frac{\widehat{\bbE\Gamma(f)}}{\widehat{\Var}(f)}
            \right\}.
    \]
    Here, \(\widehat{\Var}(f)\) is the sample variance of \(f(\cS_1^{(b)})=\hat\tau(\cS_1^{(b)})-\tau\) over \(b\le B_S\), and \(\mathrm{gap}_{n,n_1}= \tfrac{n}{n_1n_0}\).

    \item 
    Finally, estimate the corrected oracle bias parameter by
    \[
        \widehat\Bstar
            =
            \frac{
                \left[
                B_S^{-1}\sum_{b=1}^{B_S}
                \bigl(\widehat{Lf}_b+\widehat{\lambda^\star} f_b\bigr)^2
                \right]^{1/2}
            }{\widehat{\lambda^\star}}.
    \]
\end{enumerate}

If \(|\cS_1^{(b)}\times \cS_0^{(b)}|\) is too large, we subsample \(B_{\mathrm{pair}}\) ordered pairs \((i,j)\) uniformly without replacement to approximate \(\widehat{\Gamma}_b\) and \(\widehat{Lf}_b\).
See Algorithm \ref{alg:mc-bias} for a summary of the procedure in pseudocode format.

\begin{algorithm}[t]
    \caption{\textsc{MCBias}$(X,y^{(0)},y^{(1)},n_1; B_S, B_{\mathrm{pair}})$ — estimate \(B^\star\) for \(f(\cS)=\hat\tau_{\OLS}(\cS)-\tau\)}
    \label{alg:mc-bias}
    \begin{algorithmic}[1]
        \Require $X$, $y^{(0)}$, $y^{(1)}$, treated size $n_1$; budgets $B_S, B_{\mathrm{pair}}$
        \Ensure $\widehat{\bbE\Gamma(f)}$, $\widehat{\lambda^\star}$, $\widehat \Bstar$
        \For{$b=1$ \textbf{to} $B_S$}
          \State Draw $\cS^{(b)}_1\sim\Unif\binom{[n]}{n_1}$; set $\cS^{(b)}_0=[n]\setminus \cS^{(b)}_1$
          \State $f_b \gets \hat\tau(\cS^{(b)}_1)-\tau$
          \If{$B_{\mathrm{pair}} \ge n_1 n_0$}
             \State $\widehat\Gamma_b \gets \frac{1}{2 n_1 n_0}
                    \sum_{i\in \cS^{(b)}_1}\sum_{j\in \cS^{(b)}_0}
                    \big(\Delta_{ij} f(\cS^{(b)}_1)\big)^2$
             \State $\widehat{Lf}_b \gets \frac{1}{n_1 n_0}
                    \sum_{i\in \cS^{(b)}_1}\sum_{j\in \cS^{(b)}_0}
                    \Delta_{ij} f(\cS^{(b)}_1)$
          \Else
             \State Draw \(B_{\mathrm{pair}}\) ordered pairs \((i_k,j_k)\) uniformly at random from \(\cS^{(b)}_1\times \cS^{(b)}_0\) without replacement
             \State $\widehat\Gamma_b \gets \frac{1}{2\,B_{\mathrm{pair}}}\sum_{k=1}^{B_{\mathrm{pair}}}
                    \big(\Delta_{i_k j_k} f(\cS^{(b)}_1)\big)^2$
             \State $\widehat{Lf}_b \gets \frac{1}{B_{\mathrm{pair}}}\sum_{k=1}^{B_{\mathrm{pair}}}
                    \Delta_{i_k j_k} f(\cS^{(b)}_1)$
          \EndIf
        \EndFor
        \State $\widehat{\bbE\Gamma(f)} \gets \frac{1}{B_S}\sum_{b=1}^{B_S}\widehat\Gamma_b$,\quad
               $\widehat{\Var}(f) \gets \EmpVar \big(\{f_b\}_{b\le B_S}\big)$
        \State $\widehat{\lambda^\star} \gets \max \left\{\,\mathrm{gap}_{n,n_1},\ \tfrac{\widehat{\bbE\Gamma(f)} }{ \widehat{\Var}(f)} \right\}$
        \State $\widehat \Bstar \gets
            \frac{
            \left[
            B_S^{-1}\sum_{b=1}^{B_S}
            \bigl(\widehat{Lf}_b+\widehat{\lambda^\star} f_b\bigr)^2
            \right]^{1/2}
            }{\widehat{\lambda^\star}}$
        \State \Return $\big(\widehat{\bbE\Gamma(f)}, \, \widehat{\lambda^\star}, \, \widehat \Bstar\big)$
    \end{algorithmic}
\end{algorithm}

\subsubsection{Computational Complexity and Monte Carlo Accuracy}\label{sec:MC_discussion}

For OLS--RA, all swap effects are evaluated via the paired deletion--insertion telescope and the branch-aware row-update identities, so no OLS model re-fitting is required. 
If \(r_{S_\alpha}=\mathrm{rank}(X_{S_\alpha})\), the one-time arm-wise preprocessing cost is 
\(
    O \left(\sum_{\alpha\in\{0,1\}}\bigl(|S_\alpha|\,p\,r_{S_\alpha}+r_{S_\alpha}^3\bigr)\right).
\)

Thereafter, the dominant cost is the number of sampled swap evaluations. 
In the sampled-\(J\) implementation, Algorithm~\ref{alg:mc-varrange} and Algorithm~\ref{alg:mc-bias} respectively perform
\[
    N^{\mathrm{var}}_{\Delta}
    =
    \sum_{t=1}^{n_1} |\cI_t|\,B_{\mathrm{cond}}\,B_J^{\mathrm{eff}},
    \qquad
    N^{\mathrm{bias}}_{\Delta}
    =
    B_S \min(B_{\mathrm{pair}},n_1n_0)
\]
swap evaluations, where \(B_J^{\mathrm{eff}}=n_0\) if \(B_J=0\) and \(B_J^{\mathrm{eff}}=B_J\) otherwise. 
Thus \(B_i\), \(B_{\mathrm{cond}}\), and \(B_J\) control the resolution and MC noise of \((\widehat V^\star,\widehat R^\star)\), while \(B_S\) and \(B_{\mathrm{pair}}\) control the stability of \(\widehat B^\star\). 
The reported runs use moderate budgets to balance computational cost and Monte Carlo variability.

All in all, the total arithmetic count is
\[
    O \left(\sum_{\arm\in\{0,1\}}\big(|S_\arm|\,p\,r_{S_\arm}+r_{S_\arm}^3\big)\right)
        +
        O \Bigl((N^{\mathrm{var}}_{\Delta}+N^{\mathrm{bias}}_{\Delta})\cdot c_{\mathrm{swap}}\Bigr),
\]
where $c_{\mathrm{swap}}=O(r_{S_\arm})$ for deletions and $c_{\mathrm{swap}}\in\{O(r_{S_\arm}p),\,O(r_{S_\arm}|S_\arm|)\}$ for insertions, depending on whether the cross-terms are precomputed or not. 
The loops over $t$, $i\in\cI_t$, and $(i,j)$ pairs are parallelizable. 
With pre-computation, the memory requirement for the cached factors is $O(|S_\arm|r_{S_\arm})$.

\subsection{Simulation Studies}\label{sec:experiments_main}

\subsubsection{Setup}\label{sec:exp_setup}
\paragraph{Data Generation Process.}
We choose \( n \in \ZZp \) and let \(\rho = 0.3\); \(n_1= \textrm{round}(\rho n) \); \(n_0=n-n_1\). 
For \(\gamma\in\{0,0.25,0.5,0.75,1.0,1.25,1.5\}\), we set \(p=\lceil n^\gamma\rceil\) and form \(X\in\RR^{n\times p}\) in a two-stage process as follows.
First of all, we generate a single ``master'' matrix \(\widetilde X\in\RR^{n\times \lceil n^{1.5} \rceil}\) with i.i.d.\ entries from the standard Gaussian distribution $\cN(0,1)$, and hold it fixed for controlled comparison of covariate dimensionality \(p\) across all conditions.
Thereafter, for each \(p\), we create \(X\) by subsampling the first \(p\) columns of \(\widetilde X\); then column‑center and rescale so that \(\bone^\top X=0\) and each column of \(X\) has norm \(\sqrt{n}\). 

With $X$ instantiated, we generate potential outcomes \(y^{(1)} = X\beta_1^\ast + \varepsilon^{(1)}\) and \(y^{(0)} = X\beta_0^\ast + \varepsilon^{(0)}\).  
In the classical regime where \(\Sigma_{\arm}\) is invertible, arm-wise OLS with intercept is invariant to adding a common linear signal \(X\beta\) to both arms: the fitted slope in each arm shifts by \(\beta\), while the fitted intercept, and hence \(\hat\tau-\tau\), remains unchanged (cf.\ \eqref{eq:beta-sample-minus-pop} in Section~\ref{sec:reg_adjustment}). 
In overparameterized regimes, one may analogously use the minimum-\(\ell_2\)-norm OLS fit, replacing \(\Sigma_{\arm}^{-1}\) by \(\Sigma_{\arm}^{\dagger}\) \citep{shen2023algebraic}; we do not rely on an exact invariance statement there though. 
Thus, in our experiments, we set \(\beta_1^\ast=\beta_0^\ast=0\) for simplicity and take \(\varepsilon^{(1)}\) and \(\varepsilon^{(0)}\) to have i.i.d.\ \(\cN(0,1)\) entries. 
Preliminary runs with a common signal gave similar qualitative conclusions; see Remark~\ref{remark:model_linear} below.

We keep $(X, y^{(1)}, y^{(0)})$ fixed while sampling treatment assignments (complete randomization) and compare the OLS--RA estimator to the DiM baseline and empirical measures for diagnostics. 
For each realized instance \((X,y^{(0)},y^{(1)})\), we draw \(N\) treatment assignments under complete randomization and compute all statistics assignment‑wise; we repeat this \(R\) times with independent seeds. 
Unless otherwise noted in each experiment’s caption, we use \(R=20\) and \(N=500\).

\begin{remark}\label{remark:model_linear}
    We also tested models with a common linear signal by taking a random unit vector \(\beta^*\) and setting \(\beta^*_1=\beta^*_0=\theta\,\beta^*\) with \(\theta\ge0\). 
    Preliminary runs with stronger common linear signal (i.e., $y^{(\arm)}=\theta\,X\beta^\star+\varepsilon^{(\arm)}$ with $\theta>0$) gave qualitatively similar results and did not materially change our conclusions to follow, hence they are omitted for brevity. 
    A more extensive investigation is deferred to future work.
\end{remark}

\paragraph{Oracle Quantities \(\Vstar, \Rstar, \Bstar\) and Monte Carlo Estimation.} 
We consider two ATE estimators: 
\begin{enumerate}[label=(\roman*)]
    \item 
    the difference-in-means estimator \(\hat\tau_{\DIM}\), cf. \eqref{eqn:DiM}, and
    \item 
    the OLS-adjusted estimator \(\hat\tau_{\ols}\), cf. \eqref{eqn:RA_OLS}. 
\end{enumerate}
For each estimator, we estimate the concentration parameters \(\Vstar, \Rstar\), cf. \eqref{eqn:oracle_concentration_param}, and the bias parameter \(\Bstar\), cf. \eqref{eqn:oracle_bias_param}. 
For DiM, the remaining-pool oracle quantities \(V^\star\) and \(R^\star\) admit closed forms, and \(B^\star=0\). 
For OLS--RA, we estimate \((V^\star,R^\star,B^\star)\) by Monte Carlo, yielding \((\widehat{\Vstar},\widehat{\Rstar},\widehat{\Bstar})\); see Appendix~\ref{sec:experiments_monte_carlo} for details.

We recall that with the assignment‑exposure filtration \((\cF_t)\) and martingale \(M_t=\bbE[f(\Sone)\mid \cF_t]\), the increment sequence is \(D_t \coloneqq M_t-M_{t-1}\). 
Thus the realized predictable quadratic variation (PQV) and the local range along a reveal order \(\Pi\) are
\[
    V(\cS_1,\Pi)  =  \sum_{t=1}^{n_1} \Var \big(D_t\mid\cF_{t-1}\big),
    \qquad
    R(\cS_1,\Pi)  =  \max_{1\le t\le n_1} \,\esssup \bigl(\,|D_t| \,\bigm|\, \cF_{t-1}\bigr),
\]
cf.\ \eqref{eqn:oracle_conc_components}. 
By the reveal–swap identity \eqref{eq:rev-swap-avg}, \(D_t = -\,\alpha_t\,\zeta_t(I)\) with
\(\alpha_t = \frac{n_0}{\,n-t+1\,}\) and \(\zeta_t(i)=\bbE_{J,\cT}[\Delta_{iJ} f(\Sprox{t-1}(i,\cT))\mid\cF_{t-1}]\) (cf.\ \eqref{eqn:zeta_t}). 
Consequently,
\[
    \Vstar=\sum_{t=1}^{n_1}\alpha_t^2\,\Var \big(\zeta_t(I)\big),
    \qquad
    \Rstar=\max_{1\le t\le n_1}\alpha_t\max_{i\in\cR_{t-1}}|\zeta_t(i)|,
\]
which Algorithm~\ref{alg:mc-varrange} (presented in Appendix~\ref{sec:experiments_monte_carlo}) approximates via Rao–Blackwellization over controls \(J\).

Also, recall that the bias parameter is 
\[
    \Bstar
    =
    \frac{\sqrt{\bbE\big(Lf+\lambda^\star f\big)^2}}{\lambda^\star},
    \qquad
    \lambda^\star
    =
    \frac{\bbE\Gamma(f)}{\Var(f)},
\]
where
\[
    \Gamma(f)(\cS_1)
    =
    \frac{1}{2n_1 n_0}\sum_{i\in\cS_1}\sum_{j\in\cS_0}
    \big(\Delta_{ij} f(\cS_1)\big)^2,
    \qquad
    (Lf)(\cS_1)
    =
    \frac{1}{n_1n_0}\sum_{i\in\cS_1}\sum_{j\in\cS_0}
    \Delta_{ij} f(\cS_1).
\]
Algorithm~\ref{alg:mc-bias} (see Appendix~\ref{sec:experiments_monte_carlo}) estimates \(\bbE\Gamma(f)\), \(\lambda^\star\), and the residual norm \(\bbE(Lf+\lambda^\star f)^2\), yielding the Monte Carlo estimate \(\widehat{\Bstar}\).

Once the required oracle quantities are available—exactly in the DiM case, and via \((\widehat{\Vstar},\widehat{\Rstar},\widehat{\Bstar})\) in the OLS--RA case—for any given \(\delta \in (0,1)\), for each instantiated treatment set \(\cS_1^{(k)}\), we compute the oracle finite-sample CI for $\tau$ of desired coverage \(1-\delta\) using \eqref{eqn:main_oracle} as
\begin{equation}\label{eqn:CI_radius}
    \mathrm{CI}_{\delta}(\cS_1^{(k)}) \coloneqq \left[ \, \hat{\tau} - \widehat r_\delta(\cS_1^{(k)}),~ \hat{\tau} + \widehat r_\delta(\cS_1^{(k)}) \,\right]
    \qquad\text{where}\qquad
    \widehat r_\delta(\cS_1^{(k)}) \coloneqq \sqrt{2 \widehat{\Vstar} \, \log\frac{2}{\delta} }+\frac{ \widehat{\Rstar}}{3} \, \log\frac{2}{\delta} + \widehat{\Bstar}.
\end{equation}

\begin{remark}
    Here, the concentration parameter estimates \(\widehat{\Vstar} = \widehat{\Vstar}(\cS_1^{(k)})\) and \(\widehat{\Rstar} = \widehat{\Rstar}(\cS_1^{(k)})\) depend on the particular treatment assignment \(\cS_1^{(k)}\), whereas the bias parameter estimate \(\widehat{\Bstar}\) does not.
\end{remark}

\paragraph{Evaluation and Diagnostics.} 
For each instance \( k \in [N] \), we compute the ATE estimator \( \hat\tau \in \{ \hat\tau_{\DIM}, \hat\tau_{\ols} \} \). 
Let $\{\hat\tau_k\}_{k=1}^N$ be the $N$ estimates of a given method. 
At \(\delta = 0.05\), we report the following performance metrics:
\begin{itemize}
    \item 
    \emph{Coverage rate:} 
    \[
        \CR_\delta= \frac{1}{N}\sum_{k=1}^N \Ind \bigl\{\,|\hat\tau_k-\tau|\le \widehat r_\delta(\cS_1^{(k)})\,\bigr\}.
    \]
    \item 
    \emph{Aggregate CI radii:} 
    the mean, median, and variance of the CI radius \(\widehat r_\delta\) across $N$ assignments.
    \item 
    \emph{Inter‑quantile benchmark:} 
    \(q_{0.975}-q_{0.025}\), the empirical 97.5-th percentile of \(\hat\tau-\tau\) minus 2.5-th percentile of \(\hat\tau-\tau\) across the \(N\) assignments.
\end{itemize}
These metrics are used to assess how tight or loose the constructed confidence intervals are.

Aggregation of results for reporting is nested and experiment-specific as follows.  
For Experiment~1, coverage is averaged across outer draws, while CI widths and inter-percentile widths are summarized by the median of the within-draw means; the corresponding parenthetical entries report the matching within-draw or across-draw variance summaries. 
For Experiment~2, each outer draw is first summarized over its \(N\) assignments, and the reported entries are medians across the \(R\) outer draws, with variances across outer draws in parentheses. 
Throughout, we report \emph{CI widths} (twice the radius), not the radius.

In addition to coverage and width summaries, we report diagnostic quantities that help interpret the Monte Carlo estimates. 
For DiM, we compare the exact remaining-pool oracle quantities with terminal-assignment diagnostics computed after conditioning on the realized final treatment/control split. 
For OLS--RA, we report three algorithm-tied diagnostics---\(V_{\mathrm{PQV}}\), \(R_{\mathrm{swap}}\), and \(B_{\mathrm{emp}}\)---all computed from the same sampled candidate units, completion sets, and admissible controls that produce \((\widehat{\Vstar},\widehat{\Rstar},\widehat{\Bstar})\). 
Here PQV stands for \emph{predictable quadratic variation}. 
These diagnostics are not alternative oracle parameters and should not be interpreted as upper or lower bounds for \((\Vstar,\Rstar,\Bstar)\); rather, they are internal reference scales that help identify where slack enters in the Monte Carlo procedure.
In particular, the current implementation forms the direct MC estimate \(\widehat{\Rstar}\) from the averaged stepwise quantities and records \(R_{\mathrm{swap}}\) from the corresponding raw sampled swap effects.

\paragraph{Computing Environment.}
All experiments were run on a single Apple Mac mini (M4 Pro, Apple silicon) under macOS using Python (NumPy/SciPy; CPU only).

\subsubsection{Experiment 1: Finite-sample versus Asymptotic CIs for Difference-in-Means}\label{sec:exp1_validity_dim}
The difference-in-means (DiM) estimator is unbiased under complete randomization and admits asymptotic normality with the classical Neyman variance formula. 
It is also useful here because, for a realized reveal order \(\Pi\), the remaining-pool oracle quantities \(V^\star\) and \(R^\star\) admit explicit closed forms.

To state this precisely, we define
\[
    a_i \coloneqq \frac{y_i^{(1)}}{n_1}+\frac{y_i^{(0)}}{n_0},
    \qquad
    m_t \coloneqq |\cR_{t-1}| = n-t+1,
    \qquad
    \bar a_{t-1} \coloneqq \frac{1}{m_t}\sum_{u\in\cR_{t-1}} a_u .
\]
Then, for each \(i\in\cR_{t-1}\),
\[
    \zeta_t(i)
    =
    \frac{\sum_{u\in\cR_{t-1}} a_u - m_t a_i}{m_t-1}
    =
    -\,\frac{m_t}{m_t-1}\bigl(a_i-\bar a_{t-1}\bigr).
\]
Since \(\alpha_t = n_0/(n-t+1)=n_0/m_t\), it follows that
\begin{align}
    V^\star
    &=
    \sum_{t=1}^{n_1}\alpha_t^2
    \Var_{i\sim\Unif(\cR_{t-1})} \bigl(\zeta_t(i)\bigr)
    =
    \sum_{t=1}^{n_1}
    \frac{n_0^2}{(m_t-1)^2}
    \cdot
    \frac{1}{m_t}
    \sum_{i\in\cR_{t-1}}
    \bigl(a_i-\bar a_{t-1}\bigr)^2, \label{eqn:Vstar_rp}\\
    R^\star
    &=
    \max_{1\le t\le n_1}\alpha_t \max_{i\in\cR_{t-1}} |\zeta_t(i)|
    =
    \max_{1\le t\le n_1}
    \frac{n_0}{m_t-1}
    \max_{i\in\cR_{t-1}}
    \bigl|a_i-\bar a_{t-1}\bigr|.   \label{eqn:Rstar_rp}
\end{align}
Thus, for the DiM estimator, \(V^\star\) and \(R^\star\) are computable exactly from the remaining-pool means, variances, and maximal deviations of the score sequence \(\{a_i\}_{i=1}^n\) along the reveal path. 

In Experiment~1, we use the DiM estimator setting as a sanity check for the finite-sample (FS) construction, asking: 
(i) whether the proposed FS CIs achieve at least nominal coverage; 
(ii) how conservative their widths are relative to Wald intervals and empirical randomization dispersion; and 
(iii) how the remaining-pool oracle quantities scale with \(n\).

\paragraph{Setup and Evaluation.} 
We follow the general data-generation protocol in Section~\ref{sec:exp_setup}. 
DiM does not depend on covariates, so we vary only $n\in\{10,20,40,80,160,320,640\}$ and set $\gamma=0$ (hence $p=1$). 
Here we set \(\theta=0\), so \(y^{(\arm)}=\varepsilon^{(\arm)}\) and the finite‑population estimand is \(\tau=n^{-1}\sum_{i=1}^n(y^{(1)}_i-y^{(0)}_i)\), which is centered at \(0\) and fluctuates at scale \(O_P(n^{-1/2})\).

For each instantiated finite population of potential outcomes and each complete-randomization assignment, we compute:
\begin{itemize}
    \item 
    \emph{Finite-sample (FS) CI for DiM} with radius
    \[
        r_{\DIM}(\delta) 
            = \sqrt{2\,\Vstar_{\mathrm{rp}} \log \frac{2}{\delta}}
                +\frac{\Rstar_{\mathrm{rp}} }{3}\log \frac{2}{\delta},
    \]
    where \((\Vstar_{\mathrm{rp}},\Rstar_{\mathrm{rp}})\) are the exact remaining-pool oracle quantities for DiM under the reveal law of Proposition~\ref{prop:reveal-swap}; we set \(\Bstar=0\) because DiM is design-unbiased.
    
    \item 
    \emph{Asymptotic Wald (Neyman) CI} with radius
    \[
        r_{\mathrm{Wald}}(\delta) 
            = z_{1-\delta/2}\,    \sqrt{\widehat{\Var}_{\mathrm{Ney}}},\qquad
            \widehat{\Var}_{\mathrm{Ney}}=\frac{s_1^2}{n_1}+\frac{s_0^2}{n_0},
    \]
    where $s_{\arm}^2$ is the sample variance of observed outcomes in arm $\arm$ under the current assignment; $z_{1-\delta/2}$ is the standard normal quantile.

    \item 
    \emph{Empirical inter-percentile width} of $\hat\tau_{\DIM}-\tau$, \( q_{1 - \delta/2} - q_{\delta/2}\). 
    For instance, when $\delta = 0.05$, we compute the empirical 2.5--97.5\% widths of $\hat\tau_{\DIM}-\tau$.
\end{itemize}

\paragraph{Empirical Diagnostics for DiM.} 
For DiM, the reveal law of Proposition~\ref{prop:reveal-swap} yields exact remaining-pool oracle quantities \((V^\star,R^\star)\). 
Separately, we report terminal-assignment diagnostics \((V_{\mathrm{term}},R_{\mathrm{term}})\), obtained by conditioning on the realized final treatment/control split. 
Concretely, we define
\[
    V_{\mathrm{term}}
    =
    \sum_{t=1}^{n_1}\alpha_t^2
    \Var \bigl(\mu_i:\, i\in S_1\setminus\{\Pi(1),\ldots,\Pi(t-1)\}\bigr),
    \qquad\text{and}\qquad
    R_{\mathrm{term}}
    =
    \max_{i\in S_1}\alpha_{\mathrm{pos}(i)}|\mu_i|,
\]
where \(
    \mu_i
    =
    \frac{\overline{y^{(1)}_{S_0}}-y_i^{(1)}}{n_1}
    -
    \frac{y_i^{(0)}-\overline{y^{(0)}_{S_0}}}{n_0},
\) and \(\mathrm{pos}(i)\) is the reveal position of \(i\) in \(\Pi\). 
These terminal diagnostics are inexpensive empirical surrogates for assessing conservatism, but they are not the oracle quantities used in the FS CI. 
They condition on the realized final treatment/control split, whereas the oracle quantities follow the remaining-pool reveal law and therefore account for the randomness of the eventual control set.

\begin{table}[t]
    \centering
    \caption{
    Coverage and width summaries for DiM at \(\delta=0.05\) (Experiment~1). 
    For each \(n\), we report FS and Wald coverage, CI width (i.e., \(2\times\)radius), and the empirical \(2.5\%\)--\(97.5\%\) width of \(\hat\tau_{\DIM}-\tau\), computed from \(R=20\) outer replicates and \(N=500\) assignments per replicate. 
    For coverage and percentile widths, the parenthetical entries are variances across the \(R\) outer replicates of the replicate-level summaries. 
    For CI widths, the parenthetical entries are the medians across the \(R\) outer replicates of the within-replicate width variances.
    }
    \label{tab:exp1_validity}
    \begin{tabular}{l   c c     c c     c}
        \toprule
        $n$     & \multicolumn{2}{c}{Finite-sample CI}  & \multicolumn{2}{c}{Asymptotic CI}  & Inter-percentile range \\
        \cmidrule(lr){2-3} \cmidrule(lr){4-5}
                &  Coverage  & CI width     & Coverage  & CI width     & $q_{0.975}-q_{0.025}$ \\
        \midrule
        10  & 1.000 (0.000) & 4.413 (0.108) & 0.945 (0.002) & 2.504 (0.601) & 1.903 (0.236) \\
        20  & 1.000 (0.000) & 2.933 (0.023) & 0.976 (0.000) & 1.933 (0.093) & 1.426 (0.075) \\
        40  & 1.000 (0.000) & 2.068 (0.006) & 0.983 (0.000) & 1.423 (0.027) & 1.087 (0.023) \\
        80  & 1.000 (0.000) & 1.256 (0.001) & 0.987 (0.000) & 0.948 (0.006) & 0.697 (0.004) \\
        160 & 0.999 (0.000) & 0.885 (0.000) & 0.988 (0.000) & 0.691 (0.002) & 0.532 (0.001) \\
        320 & 0.998 (0.000) & 0.593 (0.000) & 0.987 (0.000) & 0.481 (0.000) & 0.364 (0.000) \\
        640 & 0.999 (0.000) & 0.404 (0.000) & 0.990 (0.000) & 0.338 (0.000) & 0.258 (0.000) \\
        \bottomrule
    \end{tabular}
\end{table}

\begin{table}[t]
    \centering
    \caption{
    Summary of remaining-pool oracle quantities and terminal-assignment diagnostics for DiM (Experiment~1). 
    For each \(n\), we report medians across outer draws, with variances across outer draws in parentheses. 
    Here, the subscript \(\mathrm{rp}\) in \(\Vstar_{\mathrm{rp}}\) and \(\Rstar_{\mathrm{rp}}\) indicates they are computed using the remaining-pool identities \eqref{eqn:Vstar_rp}, \eqref{eqn:Rstar_rp}.
    }
    \label{tab:exp1_params}
    \begin{tabular}{l   c c     c c }
        \toprule
        $n$     & \multicolumn{2}{c}{Variance} & \multicolumn{2}{c}{Range}  \\
        \cmidrule(lr){2-3} \cmidrule(lr){4-5}
        &   \(\Vstar_{\mathrm{rp}}\) & \(V_{\mathrm{term}}\) & \(\Rstar_{\mathrm{rp}}\) & \(R_{\mathrm{term}}\) \\
        \midrule
        10  & 0.257 (0.022) & 0.055 (0.001) & 0.657 (0.043) & 0.386 (0.009) \\
        20  & 0.143 (0.003) & 0.063 (0.001) & 0.385 (0.005) & 0.267 (0.002) \\
        40  & 0.082 (0.000) & 0.049 (0.000) & 0.211 (0.002) & 0.153 (0.000) \\
        80  & 0.033 (0.000) & 0.025 (0.000) & 0.111 (0.000) & 0.082 (0.000) \\
        160 & 0.018 (0.000) & 0.016 (0.000) & 0.065 (0.000) & 0.048 (0.000) \\
        320 & 0.009 (0.000) & 0.008 (0.000) & 0.036 (0.000) & 0.027 (0.000) \\
        640 & 0.004 (0.000) & 0.004 (0.000) & 0.019 (0.000) & 0.014 (0.000) \\
        \bottomrule
    \end{tabular}
\end{table}

\paragraph{Results.}
Table~\ref{tab:exp1_validity} summarizes coverage and CI widths, and Table~\ref{tab:exp1_params} compares the exact remaining-pool oracle quantities with terminal-assignment diagnostics.

The FS CIs achieve at least nominal coverage (which is 0.95 here) across all reported \(n\), but they are conservative. 
The FS widths exceed the Wald widths throughout; for example, at \(n=10\), the FS width is \(4.413\) versus Wald width \(2.504\), while the empirical \(2.5\%\)--\(97.5\%\) width of \(\hat\tau_{\DIM}-\tau\) is \(1.903\). 
Thus Experiment~1 should be read primarily as a validity and conservatism sanity check for the general finite-sample bound. 

Table~\ref{tab:exp1_params} helps localize, though not fully identify, the source of this conservatism. 
The exact remaining-pool oracle quantities \(V^\star\) and \(R^\star\) are noticeably larger than the terminal-assignment diagnostics \(V_{\mathrm{term}}\) and \(R_{\mathrm{term}}\), especially at small \(n\); for example, at \(n=10\), \(V^\star=0.257\) versus \(V_{\mathrm{term}}=0.055\), and \(R^\star=0.657\) versus \(R_{\mathrm{term}}=0.386\). 
Because \(V^\star\) and \(R^\star\) are computed exactly here, this gap is not a Monte Carlo artifact. 
At the same time, \(V_{\mathrm{term}}\) and \(R_{\mathrm{term}}\) are only diagnostic surrogates---not oracle targets and not quantities known to yield valid confidence intervals---so these comparisons should not be interpreted as calibration identities. 
Rather, they suggest that a substantial part of the conservatism may already enter through the remaining-pool reveal construction, and perhaps more fundamentally through the martingale/Freedman concentration step. 
Determining whether this looseness is intrinsic to this line of argument, or can be reduced by a sharper finite-population concentration method, remains an interesting direction for future work.

\subsubsection{Experiment 2: Finite-sample CIs for OLS--RA}\label{sec:exp2_ra_ci}

\paragraph{Objective.} 
Having completed basic sanity checks with the DiM estimator, here we assess finite-sample (FS) confidence intervals (CIs) for the OLS regression-adjusted estimator (OLS--RA). 
Our goals in this experiment are:
(i) to ensure at least nominal coverage across various \( (n,\gamma) \); and
(ii) to quantify the tightness or conservatism of the FS-CI widths relative to an asymptotic DiM baseline and the empirical \(2.5\%\)--\(97.5\%\) inter-percentile range of $\hat\tau_{\ols}-\tau$.

\paragraph{Setup and Evaluation.}
We follow the general data-generation protocol in Section~\ref{sec:exp_setup}. 
We consider \(n\in\{25,50\}\) and \(\gamma\in\{0,0.25,0.5,0.75,1.0,1.25,1.5\}\), so that \(p=\lceil n^\gamma\rceil\). 
For the reported run, we use \(R=20\), \(N=500\), and MC budgets \((B_S,B_{\mathrm{pair}},B_i,B_{\mathrm{cond}},B_J)=(30,30,10,10,10)\).
The bias parameter \(\widehat{\Bstar}\) is estimated once per finite-population instance using Algorithm~\ref{alg:mc-bias}, while \((\widehat{\Vstar},\widehat{\Rstar})\) are estimated assignment-wise using Algorithm~\ref{alg:mc-varrange}. 

\paragraph{Empirical Diagnostics for OLS--RA.}  
Recall \(\widehat\zeta_t(i)\) and the within-\(i\) completion variance \(\widehat s_t^2(i)\) from Section~\ref{sec:MC_concentration}. 
We report three diagnostics:
\begin{align}
    V_{\mathrm{PQV}}
        &\coloneqq
        \sum_{t=1}^{n_1}
        \alpha_t^2\,
        \widehat{\Var}_{i\in\cI_t} \big(\widehat\zeta_t(i)\big),   \label{eqn:V_PQV}\\
    R_{\mathrm{swap}}
        &\coloneqq
        \max_{1\le t\le n_1}
        \alpha_t
        \max_{i\in\cI_t}
        \max_{1\le b\le B_{\mathrm{cond}}}
        \max_{J\in\cJ_{b,i}}
        \bigl|
        \Delta_{iJ}f\big(\Sprox{t-1}(i,\cT_{b,i})\big)
        \bigr|,     \label{eqn:R_swap}\\
    B_{\mathrm{emp}}
        &\coloneqq
        \left|
        \frac{1}{N}\sum_{k=1}^N \hat\tau^{(k)}-\tau
        \right|.    \label{eqn:B_emp}
\end{align}

These quantities play different roles. 
\begin{itemize}
    \item 
    \(V_{\mathrm{PQV}}\) is the \emph{predictable quadratic variation} (PQV) before the within-\(i\) Monte Carlo noise correction. 
    In the current implementation,
    \[
        \widehat{\Vstar}
        =
        \sum_{t=1}^{n_1}\alpha_t^2
        \left[
            \widehat{\Var}_{i\in\cI_t} \big(\widehat\zeta_t(i)\big)
            -
            \frac{1}{|\cI_t|}\sum_{i\in\cI_t}\frac{\widehat s_t^2(i)}{B_{\mathrm{cond}}}
        \right]_+ ,
    \]
    where \([z]_+ = \max\{z, 0\}\). 
    Thus \(V_{\mathrm{PQV}}\) is not a separate oracle target; it is the pre-de-noising version of the variance aggregate built from the same sampled \(\widehat\zeta_t(i)\) values as \(\widehat{\Vstar}\). 
    Hence \(V_{\mathrm{PQV}}-\widehat{\Vstar}\) measures the size of the within-\(i\) Monte Carlo noise correction, rather than conservatism relative to the oracle \(V^\star\). 

    \item 
    \(R_{\mathrm{swap}}\) records the sampled raw-swap scale before conditional averaging, whereas \(\widehat{\Rstar}\) is built from the averaged quantities \(\widehat\zeta_t(i)\). 
    Thus \(R_{\mathrm{swap}}-\widehat{\Rstar}\) measures the smoothing induced by Rao--Blackwellization and completion averaging. 
    When \(B_J>0\), \(R_{\mathrm{swap}}\) is not an exact all-control envelope, but a diagnostic computed from the same sampled objects as \(\widehat{\Rstar}\).

    \item 
    \(B_{\mathrm{emp}}\) provides an empirical reference scale for the bias proxy \(\widehat{\Bstar}\). 
\end{itemize}

These diagnostics are useful because they are computed from the same sampled objects as \((\widehat{\Vstar},\widehat{\Rstar},\widehat{\Bstar})\), so they isolate distinct components of the implemented Monte Carlo pipeline. 
Unlike the DiM case, we do not report terminal-assignment diagnostics for OLS--RA, because there is no comparably simple closed-form terminal analogue.

\begin{table}[ht!]
    \centering
    \caption{
    Coverage and width summaries for OLS--RA at \(\delta=0.05\) (Experiment~2). 
    For each \((n,\gamma)\), we report FS and Wald coverage, CI width (i.e., \(2\times\)radius), and the empirical \(2.5\%\)--\(97.5\%\) width of \(\hat\tau_{\ols}-\tau\), obtained by first summarizing each outer replicate over its \(N=500\) assignments and then taking medians across the \(R=20\) outer replicates. 
    For coverage, the parenthetical entries are variances across outer replicates of the replicate-level coverage rates. 
    For CI widths, the parenthetical entries are the medians across outer replicates of the within-replicate width variances. 
    For the empirical inter-percentile widths, the parenthetical entries are variances across outer replicates of the replicate-level IPRs.
    }
    \label{tab:exp2_finite_CI_RA}
    \begin{tabular}{l c c   c c     c c     c}
        \toprule
        \(n\)   & \(\gamma\)    & \(p\)     & \multicolumn{2}{c}{Finite-sample CI (RA)}  & \multicolumn{2}{c}{Asymptotic CI (DiM)}  & Inter-percentile range \\
        \cmidrule(lr){4-5} \cmidrule(lr){6-7}
            & & &  Coverage  & CI width    & Coverage  & CI width     & $q_{0.975}-q_{0.025}$ \\
        \midrule
        25 & 0.00 & 1   & 1.000 (0.000) & 2.982 (0.212) & 0.953 (0.000) & 2.413 (0.083) & 1.316 (0.036) \\
        25 & 0.25 & 3   & 1.000 (0.000) & 4.100 (0.879) & 0.960 (0.000) & 2.389 (0.104) & 1.755 (0.039) \\
        25 & 0.50 & 5   & 1.000 (0.000) & 6.380 (14.157) & 0.948 (0.000) & 2.310 (0.123) & 2.453 (0.065) \\
        25 & 0.75 & 12  & 1.000 (0.000) & 5.874 (1.993) & 0.961 (0.000) & 2.327 (0.092) & 2.313 (0.052) \\
        25 & 1.00 & 25  & 1.000 (0.000) & 4.450 (0.385) & 0.958 (0.000) & 2.359 (0.111) & 1.849 (0.076) \\
        25 & 1.25 & 56  & 1.000 (0.000) & 4.922 (0.195) & 0.952 (0.000) & 2.540 (0.132) & 2.103 (0.078) \\
        25 & 1.50 & 125 & 1.000 (0.000) & 4.313 (0.112) & 0.960 (0.000) & 2.240 (0.097) & 1.887 (0.058) \\
        \midrule
        50 & 0.00 & 1   & 1.000 (0.000) & 2.194 (0.031) & 0.966 (0.000) & 1.788 (0.030) & 0.965 (0.013) \\
        50 & 0.25 & 3   & 1.000 (0.000) & 2.242 (0.082) & 0.961 (0.000) & 1.660 (0.027) & 0.970 (0.020) \\
        50 & 0.50 & 8   & 1.000 (0.000) & 3.284 (0.535) & 0.967 (0.000) & 1.707 (0.031) & 1.358 (0.042) \\
        50 & 0.75 & 19  & 1.000 (0.000) & 4.811 (1.766) & 0.967 (0.000) & 1.704 (0.029) & 1.862 (0.060) \\
        50 & 1.00 & 50  & 1.000 (0.000) & 3.399 (0.121) & 0.962 (0.000) & 1.727 (0.034) & 1.419 (0.014) \\
        50 & 1.25 & 133 & 1.000 (0.000) & 3.228 (0.053) & 0.970 (0.000) & 1.683 (0.033) & 1.434 (0.049) \\
        50 & 1.50 & 354 & 1.000 (0.000) & 3.254 (0.033) & 0.964 (0.000) & 1.704 (0.032) & 1.454 (0.024) \\
        \bottomrule
    \end{tabular}
\end{table}

\begin{table}[ht!]
    \centering
    \caption{
    Summary of Monte Carlo estimates and empirical diagnostics for OLS--RA (Experiment~2). 
    For each \((n,\gamma)\), we report medians across the \(R\) outer replicates, with variances across outer replicates in parentheses. 
    We do not report terminal-assignment diagnostics here because, unlike the DiM case, OLS--RA has no comparably simple closed-form terminal analogue; instead we report the algorithm-tied diagnostics \(V_{\mathrm{PQV}}\), \(R_{\mathrm{swap}}\), and \(B_{\mathrm{emp}}\).
    }
    \label{tab:exp2_diagnostics}

    \begin{adjustbox}{max width=\linewidth}
    \begin{tabular}{l c c   c c     c c     c c }
        \toprule
        \(n\)   & \(\gamma\)    & \(p\)     & \multicolumn{2}{c}{Variance (assignment-level)}  & \multicolumn{2}{c}{Range}     & \multicolumn{2}{c}{Bias}  \\
        \cmidrule(lr){4-5} \cmidrule(lr){6-7} \cmidrule(lr){8-9} 
           & &  &   \(\widehat{\Vstar}\) & \(V_{\mathrm{PQV}}\) & \(\widehat{\Rstar}\) & \(R_{\mathrm{swap}}\)   & \(\widehat{\Bstar}\) & \(B_{\mathrm{emp}}\) \\
        \midrule
        25 & 0.00 & 1   & 0.117 (0.001) & 0.119 (0.001) & 0.311 (0.003) & 0.891 (0.020) & 0.196 (0.004) & 0.021 (0.000) \\
        25 & 0.25 & 3   & 0.175 (0.002) & 0.186 (0.002) & 0.423 (0.002) & 1.913 (0.030) & 0.253 (0.024) & 0.035 (0.001) \\
        25 & 0.50 & 5   & 0.295 (0.003) & 0.380 (0.005) & 0.703 (0.006) & 7.280 (0.862) & 0.381 (0.059) & 0.043 (0.003) \\
        25 & 0.75 & 12  & 0.338 (0.006) & 0.372 (0.007) & 0.674 (0.004) & 2.323 (0.065) & 0.359 (0.007) & 0.062 (0.002) \\
        25 & 1.00 & 25  & 0.239 (0.005) & 0.248 (0.005) & 0.470 (0.005) & 1.096 (0.022) & 0.306 (0.008) & 0.030 (0.001) \\
        25 & 1.25 & 56  & 0.320 (0.005) & 0.324 (0.006) & 0.477 (0.008) & 0.954 (0.029) & 0.326 (0.003) & 0.035 (0.000) \\
        25 & 1.50 & 125 & 0.255 (0.004) & 0.257 (0.004) & 0.444 (0.011) & 0.782 (0.017) & 0.254 (0.003) & 0.024 (0.000) \\
        \midrule
        50 & 0.00 & 1   & 0.062 (0.000) & 0.062 (0.000) & 0.179 (0.001) & 0.390 (0.002) & 0.198 (0.002) & 0.011 (0.000) \\
        50 & 0.25 & 3   & 0.065 (0.000) & 0.066 (0.000) & 0.202 (0.001) & 0.535 (0.006) & 0.198 (0.004) & 0.009 (0.000) \\
        50 & 0.50 & 8   & 0.112 (0.001) & 0.126 (0.001) & 0.326 (0.002) & 1.366 (0.031) & 0.280 (0.015) & 0.026 (0.001) \\
        50 & 0.75 & 19  & 0.186 (0.002) & 0.221 (0.003) & 0.506 (0.004) & 2.434 (0.108) & 0.389 (0.011) & 0.026 (0.001) \\
        50 & 1.00 & 50  & 0.134 (0.000) & 0.139 (0.001) & 0.294 (0.001) & 0.683 (0.004) & 0.332 (0.005) & 0.026 (0.000) \\
        50 & 1.25 & 133 & 0.142 (0.002) & 0.144 (0.002) & 0.267 (0.002) & 0.535 (0.007) & 0.269 (0.004) & 0.011 (0.000) \\
        50 & 1.50 & 354 & 0.149 (0.001) & 0.150 (0.001) & 0.265 (0.001) & 0.505 (0.004) & 0.280 (0.004) & 0.017 (0.000) \\
        \bottomrule
    \end{tabular}
    \end{adjustbox}
\end{table}

\paragraph{Results.}
Table~\ref{tab:exp2_finite_CI_RA} reports coverage and widths for OLS--RA FS CIs and DiM Wald CIs, together with the empirical \(2.5\%\)--\(97.5\%\) width of \(\hat\tau_{\ols}-\tau\). 
Table~\ref{tab:exp2_diagnostics} reports the corresponding MC estimates and diagnostics.

The OLS--RA FS intervals attain empirical coverage \(1.000\) in all reported cells, which are always greater than the nominal coverage 0.95 but are conservative. 
The FS widths exceed the empirical inter-percentile widths throughout; in this run, the width/IPR ratio ranges from about \(2.24\) to \(2.60\). 
For example, at \(n=25,\gamma=0.50\), the FS width is \(6.380\) whereas the empirical IPR is \(2.453\), and at \(n=50,\gamma=0.75\), the corresponding values are \(4.811\) and \(1.862\).

The diagnostics are best interpreted structurally rather than as direct lower or upper bounds on the oracle quantities. 
By construction, \(V_{\mathrm{PQV}}\) is the pre-de-noising variance aggregate built from the same sampled \(\widehat\zeta_t(i)\) values used in \(\widehat{\Vstar}\); therefore the gap \(V_{\mathrm{PQV}}-\widehat{\Vstar}\) measures the size of the within-\(i\) noise correction, not conservatism relative to the true \(V^\star\). 
Likewise, \(R_{\mathrm{swap}}\) records the raw sampled swap scale before conditional averaging, whereas \(\widehat{\Rstar}\) is built from the averaged stepwise quantities \(\widehat\zeta_t(i)\); the gap between them quantifies the smoothing induced by Rao--Blackwellization. 
The fact that \(\widehat{\Bstar}\) exceeds \(B_{\mathrm{emp}}\) in every reported cell shows that the bias proxy is also conservative in these experiments.

Finally, the empirical IPR is largest in moderate-dimensional regimes—near \(\gamma=0.50\) when \(n=25\), and near \(\gamma=0.75\) when \(n=50\). 
This is consistent with the \(Q\)-geometry interpretation in Section~\ref{sec:main_OLS_RA}: covariates can reduce residual dispersion, while deletion and insertion sensitivities become largest near the arm-wise interpolation threshold. 
Larger-budget reruns should reduce MC noise, but are unlikely to change these qualitative conclusions.

\end{document}